\title{Cartier transform and prismatic crystals}
\author{Michel Gros, Bernard Le Stum \& Adolfo Quir\'os
\thanks{Supported by grant PGC2018-095392-B-I00 (MCI/AEI/FEDER, UE).}
}
\date{Version of \today}
\newtheorem{thm}{Theorem}[section]
\newtheorem{prop}[thm] {Proposition}
\newtheorem{cor}[thm] {Corollary}
\newtheorem{lem}[thm] {Lemma}
\theoremstyle{definition}
\newtheorem{dfn}[thm] {Definition}
\newenvironment{xmp}[1][Example.]{\begin{trivlist} \item[\hskip \labelsep {\bfseries #1}]}{\end{trivlist}}
\newenvironment{rmk}[1][Remark.]{\begin{trivlist} \item[\hskip \labelsep {\bfseries #1}]}{\end{trivlist}}
\newenvironment{rmks}[1][Remarks.]{\begin{trivlist} \item[\hskip \labelsep {\bfseries #1}]}{\end{trivlist}}
\newcommand{\Addresses}{{
 \bigskip
 \footnotesize

Michel Gros, \textsc{IRMAR, Université de Rennes,
Campus de Beaulieu, 35042 Rennes cedex, France}\par\nopagebreak
\texttt{michel.gros@univ-rennes1.fr}

\medskip

Bernard Le Stum, \textsc{IRMAR, Université de Rennes,
Campus de Beaulieu, 35042 Rennes cedex, France}\par\nopagebreak
\texttt{bernard.le-stum@univ-rennes1.fr}

\medskip

Adolfo Quir\'os, \textsc{Departamento de Mat\'ematicas, Facultad de Ciencias, Universidad Aut\'onoma de Madrid, E-28049 Madrid, Spain}\par\nopagebreak
\texttt{adolfo.quiros@uam.es}

}}
\begin{document}

\maketitle

\begin{abstract}

We show that the abstract equivalence of categories, called Cartier transform, between crystals on the $q$-crystalline and prismatic sites can be locally identified with the explicit local $q$-twisted Simpson correspondence. This establishes four equivalences that are all compatible with the relevant cohomology theories. We restrict ourselves for simplicity to the dimension one situation.

\end{abstract}

\tableofcontents

\section*{Introduction}
\addcontentsline{toc}{section}{Introduction}

This article is the continuation of \cite{GrosLeStumQuiros20} and \cite{GrosLeStumQuiros21}. It is devoted to giving the final arguments realizing our project, first outlined in \cite{Gros20}, of putting the local $q$-twisted Simpson correspondence constructed in \cite{GrosLeStumQuiros19} into the perspective of the $q$-crystalline and prismatic sites theories. These two sites, introduced by Bhargav Bhatt and Peter Scholze in \cite{BhattScholze22}, and their beautiful properties are the framework allowing the whole picture to take form. Indeed, we interpret our correspondence, very local in nature, as the explicit description of something much deeper, global and free from arbitrary choices: the Cartier transform, a canonical map between the $q$-crystalline and prismatic topoi (\cite{GrosLeStumQuiros21}, Definition 6.8). This transform had appeared already in the first lines of the proof of theorem 16.18 of \cite{BhattScholze22} that was, in some sense, considering the case of the trivial crystals, and we started to look at it systematically in \cite{GrosLeStumQuiros21}. Our main result takes the form of the commutative diagram of theorem 5.5, that establishes four compatible equivalences between categories of crystals on both the $q$-crystalline and prismatic sites and more down-to-earth categories of modules with a certain type of ``twisted'' connections. Under two of these equivalences, the local $q$-twisted Simpson correspondence corresponds to the functor induced by the general Cartier transform. Moreover, we show that these four equivalences are all compatible with cohomology. Along the way, we explain in particular what kind of twisted de Rham complex computes locally the cohomology of crystals on these two sites. 

Before describing the contents of this work, let us warn the reader that, starting from section 2, most of the results are written only in the dimension one case. We have done this for clarity but, although the higher dimensional case will admittedly require a few more constructions to be written in order to develop the $q$-calculus, no conceptual obstacle prevents the extension in a natural way  of the present results.

Let us come now to the general organization of the article, referring the reader to the main body of the text for the precise setting and statements. In Section \ref{sec1}, we prove that the Cartier transform, a map from the $q$-crystalline topos of a smooth formal scheme $\mathcal{X}$ to the prismatic topos of its base change $\mathcal{X'}$ by the Frobenius of the base is an equivalence of categories (theorem \ref{Liplus}). This result, and the strategy to prove it using ideas of Hidetoshi Oyama (\cite{Oyama17}) and Daxin Xu [(\cite{Xu19}), was foreseen since the writing of \cite{GrosLeStumQuiros20}. Soon after, a complete proof following this strategy was written by Kimihiko Li (\cite{Li21}), but using an additional smart factorization, more precisely via higher level prismatic topoi, of some arrow in our construction we had not noticed. We include here a proof, using the perspective of Frobenius descent for the prismatic topoi, but focusing only on the aspects we need for our own goal (i.e. we only introduce the level 1 prismatic topos) and improving on \cite{Li21} by showing also a cohomological compatibility. In Section \ref{sec2}, we prove (theorem \ref{lift}) a slight enhancement of a result to Bhatt and Scholze about the complete {\emph{faithful}} flatness of some $\delta$-envelope. In revisiting their argument in our setting, we have tried to avoid, as far as possible, the heavy machinery they used to develop their general theory. In Section \ref{sec3}, still in a local situation, we show (proposition \ref{covering}) how to cover the final object of the prismatic topos and deduce (proposition \ref{equi}) from this a description of locally free prismatic crystals as finite projective modules endowed with a {\emph{prismatic stratification}} (Definition \ref{prstr}), a notion that uses crucially the existence of the $\delta$-envelope discussed in the previous section. The reader familiar with classical crystalline cohomology will notice that this mimics exactly the description of crystals as modules endowed with a stratification. Pursuing the analogies with the classical theory, we introduce in Section \ref{sec4}, the {\emph{linearization}} of modules (Definition \ref{lindf}) and of {\emph{prismatic differential operators}} (Definition \ref{deldif}) and prove (proposition \ref{linfunc}) that linearization provides a functor from the category of finite projective modules and prismatic differential operators to the category of prismatic sheaves. In section \ref{sec5}, we come to less formal and much more explicit results that allow us to describe (locally finite free) prismatic crystals as (finite projective) modules endowed with a topologically quasi-nilpotent {\emph{twisted connection of level -1}}. This result (theorem \ref{equiv1}) includes the description of the prismatic envelope of the diagonal as a {\emph{twisted divided polynomial algebra of level -1}}. The same kind of description works also for the $q$-crystalline site and makes striking the analogy with the ``classical'' theory of \cite{Shiho15}. It's then easy to prove (theorem \ref{bigdiag}) the four equivalences of categories we mentioned before. In section \ref{sec6}, we prove the natural compatibilities of our equivalences with the computation of the respective cohomologies, showing in particular that the cohomology of a prismatic crystal is computed by a {\emph{twisted De Rham complex of level -1}}. An analogous result for the $q$-crystalline cohomology is shown to be true using the same technique, and recovers a result of Bhatt and Scholze that they proved in a different way. Finally, an appendix collects some results  about complete flatness (section \ref{Append1}) and derived completeness (section \ref{Append2}) that are not due to us. They are part of the general folklore and can certainly be found here and there but, since we need them at several places, we have thought useful to gather them for the comfort of the reader and have garnished them with some useful examples (sections \ref{Append3}, \ref{Append4}) to keep in mind.

Throughout the article, we fix a prime $p$.

\textbf{Caveat:} In the core of the paper, we will denote by
\[
\widehat M := \varprojlim_{n \in \mathbb N} M/I^{n}
\]
(or $M^{\wedge}$) the \emph{classical} completion of a module $M$ for an $I$-adic topology that should be understood from the context.
In contrast, in the appendix, $\widehat M$ will denote the \emph{derived completion}. 
These choices are made to lighten the notations according to the context.

\section{Prismatic and crystalline sites} \label{sec1}

We briefly recall here the formalism of prismatic and $q$-crystalline sites introduced by Bhatt and Scholze in \cite{BhattScholze22} and enhance some comparison theorems of Li from \cite{Li21} with a discussion of cohomology.

\subsection*{Prismatic crystals}

We refer the reader to \cite{Joyal85} or to section one of \cite{GrosLeStumQuiros21} for the basics on \emph{$\delta$-rings}, and in particular the fact that they carry a frobenius $\phi$.
If $B$ is a $\delta$-ring and $I \subset B$ is an ideal, then $(B,I)$ is called a \emph{$\delta$-pair}.
The $\delta$-pairs form a category in an obvious way.
A $\delta$-pair $(B,I)$ is called a \emph{prism} if $I$ is an invertible ideal, $B$ is derived complete (see definition \ref{defcomp} in the appendix) for the $(p)+I$-adic topology and $p \in I + \phi(I)B$.
Unless otherwise specified, we will always use the $(p)+I$-adic topology.
We will also consider below the corresponding \emph{Nygaard ideal} $\mathfrak N_B := \phi^{-1}(I) \subset B$.
A remarkable feature of prisms is that, if $(B,I) \to (B', I')$ is a morphism, then necessarily $I'=IB'$.
The prism is said to be \emph{bounded} if $B/I$ has bounded $p^{\infty}$-torsion (see definition \ref{inftors} in the appendix).
A prism $(B,I)$ is said to be \emph{orientable} if $I$ is principal and the choice of a generator $d$ is then an \emph{orientation}.
In this situation, the conditions on $I$ are equivalent to $B$ being $d$-torsion free and $d$ being \emph{distinguished} (i.e.\ $\delta(d) \in B^\times$).
We will then call a $B$-module $M$ \emph{bounded} when it is \emph{$d$-torsion free} with bounded $p^{\infty}$-torsion.
Its derived completion is then automatically classically complete (see proposition \ref{compcomp} in the appendix).

The underlying category of the \emph{absolute prismatic site} $\mathbb{\Delta}$ is the category opposite to the category of bounded prisms (but one still writes morphisms in the natural direction).
As is the case with the category $\mathbf{FS}$ of adic formal schemes (with respect to an ideal of finite type), it is not mandatory to assume that the ring $B$ is complete in this definition as long as we allow morphisms between completions.
We make the completeness assumption here in order to comply with the current literature, but it might be convenient later to work with rings that are not complete (in which case, we may simply read $\widehat B$ instead of $B$ when needed).
The category $\mathbb{\Delta}$ does not have a final object or fibered products in general.

Recall (see definition \ref{formflat} in the appendix) that, when $R$ is an adic ring, an $R$-module is said to be \emph{formally (faithfully) flat} if it becomes (faithfully) flat modulo any ideal of definition of $R$.
A morphism of rings $R \to S$ is said to be \emph{formally (faithfully) flat} if it turns $S$ into a formally (faithfully) flat $R$-module.
Pulling back in $\mathbb{\Delta}$ along a formally flat morphism (or, equivalently, completely flat here --- see proposition \ref{flatflat} in the appendix for the orientable case) is always possible.
It follows that formally faithfully flat morphisms define a pretopology that will actually spread to all the categories that we consider below.
This is formalized as follows: if we call a fibered category $T$ over $\mathbb{\Delta}$ \emph{a prismatic site}, then $T$ inherits a pretopology from $\mathbb{\Delta}$ (the coarser topology making the fibration cocontinuous).
Moreover, any morphism of prismatic sites (i.e.\ of fibered categories over $\mathbb \Delta$) $u : T' \to T$ is automatically continuous and cocontinuous, and therefore induces a series of adjoint functors $u_!, u^{-1}, u_*$, the last two of them defining a morphism of topoi.

One may always consider an object of a prismatic site $T$ as a morphism $(B,I) \to T$ if we identify the prism $(B,I)$ with the fibered category that it represents.
A presheaf $E$ on $T$ (we will say a \emph{prismatic presheaf}) is then a family of its \emph{realizations} $E_B$ for all prisms $(B,I)$ over $T$, together with a compatible family of transition maps $E_B \to E_{B'}$ when $(B,I) \to (B',I')$ is a morphism of prisms over $T$.
By definition, it is a sheaf if
\begin{equation} \label{sheaf}
E_B \simeq \ker\left( E_{B'} \rightrightarrows E_{B' \widehat \otimes_B B'}\right)
\end{equation}
whenever $B \to B'$ is cartesian formally faithfully flat.
Note that $E$ is then also a sheaf for the Zariski topology (see remark 2.16 of \cite{BhattScholze22}).
As an example, we can consider the sheaf of rings $\mathcal O_T$ whose realization on $(B,I)$ is the ring $B$.
A morphism of prismatic sites $u : T' \to T$ is automatically a flat morphism of ringed sites: even better, we have $u^{-1}\mathcal O_T = \mathcal O_{T'}$.
If we are given a presheaf of $\mathcal O_T$-modules $E$, then we can linearize the transition maps as $B' \otimes_B E_B \to E_{B'}$.
The presheaf is called a \emph{crystal} if all linerarized transition maps are bijective (be careful that this notion depends on the site and \emph{not} only on the topos).
Note that we do not require that a crystal is a sheaf \emph{a priori}, even if this will be the case in practice.
The inverse image of a crystal by a morphism of prismatic sites is automatically a crystal.

Before going any further, let us recall that formal faithfully flat descent holds for classically complete modules:

\begin{lem} \label{faithpro}
Let $R$ be an adic ring with respect to a finitely generated ideal $I$.
Let $B \to C$ be a formally faithfully flat morphism of classically complete $R$-algebras.
Then the classical complete pullback functor $M \mapsto M' := C \widehat \otimes_{B} M$ induces an equivalence between classically complete $B$-modules $M$ and classically complete $C$-modules $M'$ endowed with an isomorphism $C \widehat \otimes_{B} M' \simeq M' \widehat \otimes_{B} C$ satisfying the usual cocycle condition.
Moreover, $M$ is finite projective if and only if $M'$ is.
\end{lem}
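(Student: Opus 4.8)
The plan is to reduce the statement to ordinary (non-completed) faithfully flat descent applied at each finite truncation level, and then to reassemble everything by passing to the inverse limit for the $I$-adic topology. Write $B_n := B/I^{n+1}$ and $C_n := C/I^{n+1}$. By the very definition of formal faithful flatness, each $B_n \to C_n$ is faithfully flat, so classical Grothendieck descent already furnishes, for every $n$, an equivalence between $B_n$-modules and $C_n$-modules equipped with a descent datum satisfying the cocycle condition, an equivalence that moreover matches up finite projective modules on both sides. The one technical point that makes the limit procedure work is the identity
\[
(C \widehat\otimes_B M)/I^{n+1}(C\widehat\otimes_B M) \;\simeq\; C_n \otimes_{B_n} (M/I^{n+1}M),
\]
valid for any classically complete $B$-module $M$; it holds because $C$ is flat over $B$ (so that no higher Tor intervene) and because $\widehat\otimes$ is by definition the limit of the finite-level tensor products. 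The same flatness lets the reduction modulo $I^{n+1}$ of a descent datum on $M'$ produce a genuine descent datum $\sigma_n$ on $M'_n := M'/I^{n+1}M'$ over $C_n$, compatibly with the cocycle condition; and the forward functor $M \mapsto C\widehat\otimes_B M$ carries its canonical associativity isomorphism as descent datum.

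For essential surjectivity, I would start from a classically complete $C$-module $M'$ with its descent datum. Reducing modulo $I^{n+1}$ yields the pairs $(M'_n,\sigma_n)$; descending each of them gives a $B_n$-module $M_n$, and the uniqueness in Grothendieck descent (together with the compatibility of descent with the base change $B_{n+1}\to B_n$) provides canonical isomorphisms $M_{n+1}/I^{n+1}M_{n+1}\simeq M_n$. Setting $M := \varprojlim_n M_n$, a classically complete $B$-module, one recovers $C\widehat\otimes_B M \simeq \varprojlim_n (C_n\otimes_{B_n} M_n)\simeq \varprojlim_n M'_n \simeq M'$, compatibly with the descent data. Full faithfulness is then formal: for classically complete modules one has $\mathrm{Hom}_B(M,N)\simeq \varprojlim_n \mathrm{Hom}_{B_n}(M_n,N_n)$, since a $B$-linear map into the $I^{n+1}$-torsion quotient $N_n$ factors through $M_n$, and likewise on the descent-equivariant side, so the level-wise full faithfulness of descent passes to the limit.

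It remains to descend the property of being finite projective, which is the step I expect to require the most care. One direction is easy: if $M$ is a direct summand of $B^r$, then $C\widehat\otimes_B M$ is a direct summand of $C\widehat\otimes_B B^r = C^r$, hence finite projective over $C$. For the converse, suppose $M' = C\widehat\otimes_B M$ is finite projective. Then each $M'_n$ is finite projective over $C_n$, so by the finite-level descent each $M_n$ is finite projective over $B_n$; in particular $M_0 = M/IM$ is finite projective over $B/I$. The main obstacle is to upgrade this compatible system to genuine finite projectivity of the limit $M$ over the complete ring $B$. For this I would invoke the lifting of finite projective modules along the $I$-adically complete ring $B \to B/I$ (the module-theoretic counterpart of the lifting of idempotents in a complete ring): there is a finite projective $B$-module $P$ with $P/IP\simeq M_0$, and by the uniqueness of such lifts along the nilpotent thickenings $B_n\to B_0$ one obtains compatible isomorphisms $P/I^{n+1}P\simeq M_n$, whence $P\simeq \varprojlim_n M_n = M$. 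Thus $M$ is finite projective, which completes the proof.
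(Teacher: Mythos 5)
Your proof is correct and is essentially the paper's own argument: reduce to the truncations $B_n \to C_n$ via the equivalence between classically complete modules and compatible inverse systems (\cite[\href{https://stacks.math.columbia.edu/tag/09B8}{Tag 09B8}]{stacks-project}), apply classical faithfully flat descent levelwise, and settle finite projectivity by levelwise descent together with the fact that a compatible system of finite projectives over the truncations of the complete ring $B$ comes from a finite projective $B$-module (\cite[\href{https://stacks.math.columbia.edu/tag/0D4B}{Tag 0D4B}]{stacks-project}). One small correction: your identity $(C \widehat \otimes_B M)/I^{n+1}(C \widehat \otimes_B M) \simeq C_n \otimes_{B_n} (M/I^{n+1}M)$ should not be justified by flatness of $C$ over $B$ (only \emph{formal} faithful flatness is assumed, and no flatness is needed) --- it follows from right-exactness of the tensor product at each finite level together with the compatible-systems equivalence just cited.
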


\begin{proof}
It follows from \cite[\href{https://stacks.math.columbia.edu/tag/09B8}{Tag 09B8}]{stacks-project} that the category of classically complete $R$-modules is equivalent to the category of inverse systems $\{M_{n}\}_{n \in \mathbb N}$ of $R/I^{n+1}$-modules endowed with a compatible family of isomorphisms $M_{n+1}/I^{n+1}M_{n+1} \simeq M_{n}$.
The first assertion therefore follows from classical faithfully flat descent (\cite[\href{https://stacks.math.columbia.edu/tag/023N}{Tag 023N}]{stacks-project}) modulo $I^{n+1}$.
The second one follows from classical faithfully flat descent for projective modules (\cite[\href{https://stacks.math.columbia.edu/tag/058S}{Tag 058S}]{stacks-project}) and the fact that an $A$-module is finite projective if and only if it is so modulo $I^{n+1}$ for all $n$ (\cite[\href{https://stacks.math.columbia.edu/tag/0D4B}{Tag 0D4B}]{stacks-project}).
\end{proof}

\begin{rmk}
Yichao Tian gives a derived version of lemma \ref{faithpro} in proposition 1.8 of \cite{Tian21}.
\end{rmk}

In the situation of lemma \ref{faithpro}, there exists an exact sequence
\[
0 \to M \to C \widehat \otimes_{B} M \rightrightarrows C \widehat \otimes_{B} C \widehat \otimes_{B} M.
\]

When $M$ is finite projective\footnote{This is equivalent to finitely presented flat.}, we actually have $M' = C \otimes_{B} M$ and the sequence reads
\begin{equation} \label{prodes}
0 \to M \to C \otimes_{B} M \rightrightarrows (C \widehat \otimes_{B} C) \otimes_{B} M.
\end{equation}

\begin{prop} \label{prisheaf}
For a presheaf $E$ on a prismatic site $T$, the following conditions are equivalent:
\begin{enumerate}
\item
$E$ is a locally finite free sheaf of $\mathcal O_{T}$-modules,
\item
$E$ is a crystal with finite projective realizations.
\end{enumerate}
\end{prop}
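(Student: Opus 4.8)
The plan is to prove the two implications separately, using formally faithfully flat descent (Lemma~\ref{faithpro}) and the exact sequence \eqref{prodes} as the common engine. The key background fact I would invoke is that every prism in $\mathbb{\Delta}$ is bounded, hence classically complete (Proposition~\ref{compcomp}), so that Lemma~\ref{faithpro} and \eqref{prodes} apply verbatim to every covering morphism $B \to B'$ in $T$.

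For (2) $\Rightarrow$ (1), assume $E$ is a crystal with finite projective realizations. To get the sheaf condition \eqref{sheaf} for a cartesian formally faithfully flat $B \to B'$, I would use the crystal property to identify $E_{B'}$ with $B' \otimes_B E_B$ and $E_{B' \widehat\otimes_B B'}$ with $(B' \widehat\otimes_B B') \otimes_B E_B$; since $E_B$ is finite projective, the sheaf condition is then exactly the exactness of \eqref{prodes}. For local freeness, I would fix $(B,I)$ over $T$: the finite projective module $E_B$ becomes free after inverting finitely many $f_1,\dots,f_r$ generating the unit ideal, the completed localizations $\widehat{B[1/f_i]}$ are formally étale over $B$ and hence define a (Zariski, thus prismatic) covering, and the crystal property gives $E_{\widehat{B[1/f_i]}} \simeq \widehat{B[1/f_i]} \otimes_B E_B$, which is finite free. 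As the inverse image of a crystal is a crystal, $E|_{\widehat{B[1/f_i]}} \simeq \mathcal O^{n_i}$, so $E$ is locally finite free.

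For (1) $\Rightarrow$ (2), assume $E$ is a locally finite free sheaf and fix an object $B$; working locally I may assume a covering $B \to B'$ with $E|_{B'} \simeq \mathcal O^n$. Then $M' := E_{B'} = B'^n$ is finite projective, and since the free $\mathcal O$-module $E|_{B'}$ is a crystal on $T|_{B'}$, comparing its pullbacks along the two projections $B' \rightrightarrows B' \widehat\otimes_B B'$ produces a descent datum $\epsilon$ on $M'$ satisfying the cocycle condition. By Lemma~\ref{faithpro} the pair $(M',\epsilon)$ descends to a finite projective $B$-module $M$ with $B' \otimes_B M \simeq M'$; the two parallel arrows in \eqref{prodes} correspond, under the crystal identification on $T|_{B'}$, to the two transition maps of \eqref{sheaf}, so the two kernels agree and $M = E_B$ is finite projective. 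Moreover $c_{B,B'}\colon B' \otimes_B E_B \to E_{B'}$ is precisely the descent isomorphism, hence bijective.

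It remains to prove the crystal property for an \emph{arbitrary} morphism $B \to C$, and this is the step I expect to be the main obstacle, since $c_{B,C}$ is not directly controlled by the chosen trivialization. I would base change the covering to $C \to C' := C \widehat\otimes_B B'$, again formally faithfully flat; as $C'$ lies over $B'$ we still have $E|_{C'} \simeq \mathcal O^n$, so the previous paragraph applied to $C \to C'$ shows $c_{C,C'}$ is bijective, while the crystal property of $E|_{B'}$ gives that $c_{B',C'}$ is bijective. Transitivity of linearized transition maps along $B \to C \to C'$ and along $B \to B' \to C'$ then yields
\[
c_{C,C'}\circ (C' \otimes_C c_{B,C}) = c_{B,C'} = c_{B',C'}\circ (C' \otimes_{B'} c_{B,B'}),
\]
whose right-hand side is bijective, so $C' \otimes_C c_{B,C}$ is bijective; since $C \to C'$ is formally faithfully flat and all modules are finite projective, bijectivity descends (checking modulo an ideal of definition and invoking completeness), and $c_{B,C}$ is bijective. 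The two delicate points to get right are the identification of the descent kernel with the sheaf kernel of \eqref{sheaf}, and the faithfully flat reflection of isomorphisms between complete finite projective modules.
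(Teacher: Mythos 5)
Your proposal is correct and follows essentially the same route as the paper's proof: both directions rest on Lemma~\ref{faithpro} and the exact sequence \eqref{prodes}, with a Zariski covering to trivialize the finite projective realization in one direction and faithfully flat descent (of the module and then of the bijectivity of the linearized transition maps) in the other. Your final transitivity argument for an arbitrary $B \to C$ simply spells out the step the paper dispatches with ``this follows by formal faithfully flat descent from the analogous assertion on $C$.''
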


We will then call $E$ a \emph{locally finite free prismatic crystal}.

\begin{proof}
Let $(B,I)$ be a prism over $T$.
Assume that $E_{B}$ is finite projective.
It follows from the comments after lemma \ref{faithpro} that, if $C$ is a complete bounded $\delta$-$R$-algebra and $B \to C$ is a formally faithfully flat morphism, then the sequence
\[
0 \to E_{B} \to C \otimes_{B} E_{B} \rightrightarrows (C \widehat \otimes_{B} C) \otimes_{B} E_{B}
\]
is left exact.
Moreover, we can find a complete bounded $\delta$-$R$-algebra $C$ and a formally faithfully flat morphism $B \to C$ such that $C \otimes_{B} E_{B}$ is finite free (use a Zariski covering).
This shows that the second condition implies the first.
Assume conversely that $E$ is a locally finite free prismatic sheaf of $\mathcal O_{T}$-modules.
By definition (of locally), there exists a formally faithfully flat morphism of prisms $B \to C$ such that $E_{C} \simeq C^r$ as \emph{sheaves over $C$}: this implies that any transition map $C_{1} \otimes_{C} E_{C} \to E_{C_{1}}$ for a given $C \to C_{1}$ is bijective.
In particular, $E_{C}$ comes with a descent datum
\[
(C \widehat \otimes_{B} C) \otimes_{C} E_{C} \simeq E_{C \widehat \otimes_{B} C} \simeq E_{C} \otimes_{C} (C \widehat \otimes_{B} C)
\]
as in lemma \ref{faithpro} and therefore defines a finite projective $B$-module $M$.
Since $E$ is a sheaf, we must have $E_{B} = M$ (compare the short exact sequences \eqref{sheaf} and \eqref{prodes}).
It remains to check that the transition maps $B_{1} \otimes_{B} E_{B} \to E_{B_{1}}$ are bijective for all maps $B \to B_{1}$, but this follows by formal faithfully flat descent from the analogous assertion on $C$.
\end{proof}

A prismatic presheaf $E$ will be called \emph{complete} (resp.\ \emph{formally (faithully) flat}) if $E_B$ is classically complete (resp.\ formally (faithfully) flat) for all prisms $(B,I)$.
Note that both conditions together are equivalent to $E_{B}$ being derived complete and completely (faithfully) flat thanks to theorem \ref{compform} in the appendix.
One may also consider the alternative notion of a \emph{complete prismatic crystal} with the requirement $B' \widehat \otimes_{B} E_{B} \simeq E_{B'}$ as in \cite{Li21} or \cite{Tian21} for example.
It then follows from lemma \ref{faithpro} that a complete prismatic crystal is automatically a prismatic sheaf.

\subsection*{Level changing}

We fix now an oriented\footnote{Everything below generalizes to nonoriented bounded prisms, but we are only interested in this case.} bounded prism $(R,d)$, we consider the relative prismatic site $\mathbb{\Delta}(R)$ that it represents and denote by $R_{\mathbb \Delta}$ the corresponding topos.
The site $\mathbb{\Delta}(R)$ is (anti-) equivalent to the category of complete bounded $\delta$-$R$-algebras $B$ and we do not need to mention the ideal (which is automatically equal to $dB$) anymore.
There exist obvious functors

\begin{align*}
&\mathbb{\Delta}(R) \to \mathbf{FS}_{/\mathrm{Spf}(R/d)}, \quad B \mapsto \mathrm{Spf}(B/dB),
\\&\mathbb{\Delta}(R) \to \mathbf{FS}_{/\mathrm{Spf}(R/\mathfrak N_R)}, \quad B \mapsto \mathrm{Spf}(B/\mathfrak N_B),
\end{align*}
where $\mathfrak N_B = \phi^{-1}(dB)$ denotes the Nygaard ideal introduced above.
If $\mathcal X$ is a formal scheme over $R/d$ (resp.\ $R/\mathfrak N_R$), then we can pull back the fibered category that it represents and we obtain a prismatic site $\mathbb{\Delta}(\mathcal X/R)$ (resp.\ $\mathbb{\Delta}^{(1)}(\mathcal X/R)$).
An object is a complete bounded $\delta$-$R$-algebra $B$ endowed with a structural map $\mathrm{Spf}(B/dB) \to \mathcal X$ (resp.\ $\mathrm{Spf}(B/\mathfrak N_{B}) \to \mathcal X$).
A morphism in this site is a morphism of $\delta$-$R$-algebras $B \to B'$ such that the induced map
\[
\mathrm{Spf}(B'/dB') \to \mathrm{Spf}(B/dB) \quad (\mathrm{resp.}\ \mathrm{Spf}(B'/\mathfrak N_{B'}) \to \mathrm{Spf}(B/\mathfrak N_{B}))
\]
 is compatible with the structural maps.
We will denote by $(\mathcal X/R)_{\mathbb{\Delta}}$ (resp.\ $(\mathcal X/R)_{\mathbb{\Delta}^{(1)}}$) the corresponding topos.
The frobenius of $R$ induces a morphism $\overline \phi : R/\mathfrak N_R \hookrightarrow R/d$.
If $\mathcal X$ is a formal scheme over $R/\mathfrak N_R$ and we set $\mathcal X' := \mathcal X \widehat \otimes_{{}_{R/\mathfrak N_R} \nearrow\overline \phi} R/d$, then there exists a \emph{level changing} functor
\[
\rho: \mathbb{\Delta}^{(1)}(\mathcal X/R) \to \mathbb{\Delta}(\mathcal X'/R).
\]
This functor does nothing at the prism level and sends the structural map $\mathrm{Spf}(B/\mathfrak N_{B}) \to \mathcal X$ to
\[
\mathrm{Spf}(B/dB) \to \mathrm{Spf}(B/\mathfrak N_{B}) \widehat \otimes_{{}_{R/\mathfrak N_R} \nearrow\overline \phi} R/d \to \mathcal X \widehat \otimes_{{}_{R/\mathfrak N_R} \nearrow\overline \phi} R/d = \mathcal X',
\]
where the first map is induced by $\overline \phi : B/\mathfrak N_{B} \hookrightarrow B/dB$.
This change of level is a morphism of prismatic sites which is functorial in $\mathcal X/R$ and fully faithful.
But there is more:

\begin{thm}[Li]\label{Lithm}
If $(R, d)$ is an oriented prism and $\mathcal X$ is a smooth\footnote{We mean formally smooth and locally finitely presented (or, equivalently here, completely smooth).} formal scheme over $R/\mathfrak N_R$, then the morphism of topoi
\[
\rho : (\mathcal X/R)_{\mathbb{\Delta}^{(1)}} \to ((\mathcal X \widehat \otimes_{{}_{R/\mathfrak N_R} \nearrow\overline \phi} R/d)/R)_{\mathbb{\Delta}}
\]
is an equivalence.
\end{thm}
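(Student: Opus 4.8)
The plan is to realize $\rho$ as an equivalence of topoi by checking that the underlying functor of sites is a \emph{special cocontinuous functor} (see \cite[\href{https://stacks.math.columbia.edu/tag/03A0}{Tag 03A0}]{stacks-project}). As recalled above, any morphism of prismatic sites is automatically continuous and cocontinuous, and $\rho$ is already known to be fully faithful; what remains is the purely site-theoretic condition that every object of the target $\mathbb{\Delta}(\mathcal X'/R)$ admits a covering by objects lying in the essential image of $\rho$, together with the matching compatibility of coverings. Since $\rho$ is the identity at the level of prisms, coverings in the two sites are the same formally faithfully flat maps of $\delta$-$R$-algebras, so that compatibility is formal; the entire content is therefore concentrated in the essential-surjectivity-up-to-covering statement.

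To analyse that statement I would first unwind what the essential image of $\rho$ is. By the universal property of the base change $\mathcal X' = \mathcal X \widehat\otimes_{{}_{R/\mathfrak N_R}\nearrow\overline\phi} R/d$, an object of $\mathbb{\Delta}(\mathcal X'/R)$ over a prism $(B,dB)$ is the same datum as a morphism $f \colon \mathrm{Spf}(B/dB) \to \mathcal X$ compatible with $\overline\phi$, and the construction of $\rho$ shows that $(B,f)$ lies in the image exactly when $f^\sharp$ factors through $\overline\phi \colon B/\mathfrak N_B \hookrightarrow B/dB$, i.e.\ when $f^\sharp(\mathcal O_{\mathcal X})$ is contained in the image $(\phi(B)+dB)/dB$ of the Nygaard Frobenius. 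The problem is thus to show that, after a formally faithfully flat cover $B \to B'$, the base-changed structural map acquires this factorisation.

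This is where Frobenius descent enters. Working locally on $\mathcal X$ — which is licit because the prismatic topology refines the Zariski one and $\mathcal X$ is smooth, hence admits \'etale coordinates — the map $f$ is determined by the images $b_i := f^\sharp(x_i) \in B/dB$ of a system of coordinates. I would then build the cover $B \to B'$ as the (derived-)completed $\delta$-envelope obtained by freely adjoining Frobenius preimages of lifts of the $b_i$, so that over $B'$ one has $b_i \equiv \phi(y_i) \pmod{dB'}$; by construction the structural map over $B'$ then factors through $B'/\mathfrak N_{B'}$ and so lifts to the level-$1$ site, producing the required object of $\mathbb{\Delta}^{(1)}(\mathcal X/R)$ mapping onto a cover of $(B,f)$.

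The main obstacle is exactly this flatness input: one must verify that the Frobenius $\delta$-envelope is a bounded $\delta$-$R$-algebra whose structural map $B \to B'$ is \emph{formally faithfully flat}, so that it genuinely defines a covering in $\mathbb{\Delta}$. This is the prismatic analogue of the Bhatt--Scholze flatness of the prismatic envelope and is the technical heart of the argument (it is precisely the kind of statement established in Section \ref{sec2}). Granting it, the special cocontinuous functor criterion yields the equivalence of topoi. Finally, since this is an equivalence of ringed topoi with $\rho^{-1}\mathcal O_{(\mathcal X'/R)_{\mathbb{\Delta}}} = \mathcal O_{(\mathcal X/R)_{\mathbb{\Delta}^{(1)}}}$, it automatically transports cohomology; identifying its effect on the structure sheaves and their crystals is what gives the cohomological compatibility that we add to Li's original statement.
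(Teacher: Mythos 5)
Your proposal follows essentially the same route as the paper's (sketched) proof: both reduce, via a cocontinuous-functor criterion (proposition 4.2.1 of Oyama in the paper, the special cocontinuous functor criterion in your version) together with the known full faithfulness of $\rho$, to showing that every object of $\mathbb{\Delta}(\mathcal X'/R)$ admits a formally faithfully flat cover lying in the essential image, and both produce that cover by adjoining Frobenius preimages of the coordinates via a $\delta$-/prismatic envelope whose complete faithful flatness is exactly the Bhatt--Scholze input of theorem \ref{lift}. The only real difference is presentational: the paper realizes the adjunction of Frobenius preimages as a base change along the faithfully flat Frobenius of $R[\underline x]^\delta$ and builds the defining equations $\underline f$ of $\mathcal X$ into the envelopes $S$ and $S'$, whereas you adjust only the coordinate functions and (implicitly) invoke formal \'etaleness to lift the structural map --- the same construction, so your argument is fine.
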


\begin{proof}
This is theorem 1.17 of \cite{Li21} and we will only sketch it.
According to proposition 4.2.1 of \cite{Oyama17}, it is sufficient to show that the functor is locally surjective at the site level.
If, as above, we denote by $\mathcal X'$ the formal scheme appearing on the right hand side, it means that any object in $\mathbb{\Delta}(\mathcal X'/R)$ can be covered by an object in $\mathbb{\Delta}^{(1)}(\mathcal X/R)$.
In other words, given a $\delta$-$R$-algebra $B'$ and a morphism $\mathrm{Spf}(B'/dB') \to \mathcal X'$, we have to show that there exists a formally faithfully flat morphism of $\delta$-$R$-algebras $B' \to B$ and a morphism $\mathrm{Spf}(B/\mathfrak N_{B}) \to \mathcal X$ such that the diagram
\[
\xymatrix{\mathrm{Spf}(B/dB) \ar[r] \ar[d] & \mathrm{Spf}(B/\mathfrak N_{B}) \widehat \otimes_{{}_{R/\mathfrak N_R} \nearrow\overline \phi} R/d \ar[d] \\ \mathrm{Spf}(B'/dB') \ar[r] & \mathcal X' =\mathcal X \widehat \otimes_{{}_{R/\mathfrak N_R} \nearrow\overline \phi} R/d}
\]
is commutative.
This is a local question and we may therefore assume that there exists a completely regular sequence $\underline f$ in $R[\underline x]$ such that $\mathcal X = \mathrm{Spf}(A)$ with $A = (R/\mathfrak N_R)[\underline x]/(\overline {\underline f})$.
We may then consider, as in \cite{BhattScholze22}, proposition 3.13,  the \emph{prismatic envelope} $S$ (resp.\ $S'$) of
\[
1 \otimes \underline f \in R {}_{{}_{\phi}\nwarrow}\!\!\widehat\otimes_{R} R[\underline x]^\delta \quad \left(\mathrm{resp.}\ \phi(\underline f) \in R[\underline x]^\delta\right)
\]
and set $B := S \widehat \otimes_{S'} B'$.
\end{proof}

As an immediate consequence of theorem \ref{Lithm}, we obtain from proposition \ref{prisheaf} an equivalence between the category of locally finite free $1$-prismatic crystals $E$ on $\mathcal X/R$ (i.e.\ crystals on $\mathbb{\Delta}^{(1)}(\mathcal X/R)$) and the category of locally finite free prismatic crystals $E'$ on $\mathcal X'/R$ (i.e.\ crystals on $\mathbb{\Delta}(\mathcal X/R)$).
There also exists an isomorphism on cohomology
\[
\mathrm R\Gamma((\mathcal X/R)_{\mathbb{\Delta}^{(1)}}, E) \simeq \mathrm R\Gamma((\mathcal X'/R)_{\mathbb{\Delta}}, E')
\]
and therefore
\[
\forall k \in \mathbb Z, \quad \mathrm H^k((\mathcal X/R)_{\mathbb{\Delta}^{(1)}}, E) \simeq \mathrm H^k((\mathcal X'/R)_{\mathbb{\Delta}}, E').
\]

\subsection*{$q$-crystals}

We fix now an indeterminate $q$ and consider the local ring $\mathbb Z[q]_{(p,q-1)}$ endowed with the unique $\delta$-structure such that $\delta(q) = 0$.
We will denote by $(n)_q$ the $q$-analog of an integer $n \in \mathbb Z$.
We define a \emph{$q$-PD-pair} as a $\delta$-pair $(B,J)$, where $B$ is a $(p)_q$-torsion free $\delta$-$\mathbb Z[q]_{(p,q-1)}$-algebra and $J$ an ideal of $B$ such that
\[
\forall f \in J, \quad \phi(f) - (p)_q \delta(f) \in (p)_qJ.
\]
It is called \emph{complete} if $B$ is derived complete (automatically classically complete when $B$ is bounded) and, moreover, $J$ is closed in $B$.
We may then consider the category $q\mathrm{-CRIS}$ whose objects are complete bounded $q$-PD-pairs $(B,J)$ and morphisms again go in the other direction, even if we still write them in the usual way.
There exists a final object $(\mathbb Z_p[[q-1]], (0))$ in this category.
As above, it is not absolutely necessary to assume that the $q$-PD-pairs are complete as long as we allow morphisms between completions, in which case the final object would be $(\mathbb Z[q]_{(p,q-1)}, (0))$.

The forgetful functor
\[
q\mathrm{-CRIS} \to \mathbb{\Delta}, \quad (B,J) \mapsto (B,(p)_q)
\]
is a fibration (in ordered sets) and $q\mathrm{-CRIS}$ is therefore a prismatic site called the \emph{absolute $q$-crystalline site}.
A morphism $(B,J) \to (B',J')$ of $q$-PD-pairs is cartesian when $\overline {JB'} = J'$ (we use the bar to denote the closure).
In particular, this is a covering when the map $B \to B'$ is formally faithfully flat and $\overline {JB'} = J'$.
One defines more generally \emph{a $q$-crystalline site} as a fibered category over $q\mathrm{-CRIS}$ (this is then automatically a prismatic site) and then talk about $q$-presheaves and so on.

We fix a (complete) $q$-PD-pair $(R, \mathfrak r)$ and we consider the $q$-crystalline site $q\mathrm{-CRIS}(R)$  which is represented by $(R, \mathfrak r)$.
Then, the above forgetful functor (the fibration) induces a morphism of prismatic sites $q\mathrm{-CRIS}(R) \to \mathbb{\Delta}(R)$ (where by $\mathbb{\Delta}(R)$ we mean the prismatic site over $(R,(p)_q)$), but the situation is even nicer now.
This forgetful functor has an adjoint $B \mapsto (B,\overline{\mathfrak rB})$ which is a fully faithful morphism of prismatic sites, and also a coadjoint $B \mapsto (B, \mathfrak N_B)$ which is fully faithful and cocontinuous (but is not a morphism of prismatic sites because $\overline {\mathfrak N_BB'} \subsetneq \mathfrak N_{B'}$ in general).
Anyway, we obtain a sequence of adjoint morphisms of topoi
\[
R_{\mathbb{\Delta}} \hookrightarrow R_{q\mathrm{-CRIS}}, \quad R_{q\mathrm{-CRIS}} \to R_{\mathbb{\Delta}}, \quad R_{\mathbb{\Delta}} \hookrightarrow R_{q\mathrm{-CRIS}}.
\]
As above, there also exists a functor
\[
q\mathrm{-CRIS}(R) \to \mathbf{FS}_{/\mathrm{Spf}(R/\mathfrak r)}, \quad (B,J) \mapsto \mathrm{Spf}(B/J).
\]
If $\mathcal X$ is a formal scheme over $R/\mathfrak r$, we can then consider its fiber $q\mathrm{-CRIS}(\mathcal X/R)$ as we did above and we will denote by $(\mathcal X/R)_{q\mathrm{-CRIS}}$ the corresponding topos.
An object of $q\mathrm{-CRIS}(\mathcal X/R)$ is a $q$-PD-pair $(B,J)$ over $(R,\mathfrak r)$ endowed with a structural map $\mathrm{Spf}(B/J) \to \mathcal X$ over $R/\mathfrak r$ and morphisms are defined as usual.
If we write $\mathcal X_1 := \mathcal X \widehat \otimes_{R/\mathfrak r} R/\mathfrak N_R$, then there exists a couple of adjoint functors
\[
\mathbb{\Delta}^{(1)}(\mathcal X_1/R) \overset \alpha \to q\mathrm{-CRIS}(\mathcal X/R) \quad \mathrm{and} \quad q\mathrm{-CRIS}(\mathcal X/R) \overset \beta \to \mathbb{\Delta}^{(1)}(\mathcal X_1/R)
\]
such that $\beta \circ \alpha = \mathrm{Id}$ (and this is natural in $\mathcal X/R$).
The functor $\alpha$ sends the $\delta$-$R$-algebra $B'$ to the $q$-PD-pair $(B', \mathfrak N_{B'})$ and the structural map $\mathrm{Spf}(B'/\mathfrak N_{B'}) \to \mathcal X_1$ to the composite map $\mathrm{Spf}(B'/\mathfrak N_{B'}) \to \mathcal X_1 \hookrightarrow \mathcal X$.
The functor $\beta$ sends the $q$-PD-pair $(B,J)$ to the $\delta$-$R$-algebra $B$ and the structural map $\mathrm{Spf}(B/J) \to \mathcal X$ to the composite
\[
\mathrm{Spf}(B/\mathfrak N_B) \to \mathrm{Spf}(B/J) \widehat \otimes_{R/\mathfrak r} R/\mathfrak N_R \to \mathcal X \widehat \otimes_{R/\mathfrak r} R/\mathfrak N_R = \mathcal X_1.
\]
Both functors are morphisms of prismatic sites (and the first one is fully faithful).
Thus, there exists a pair of adjoint morphisms of topoi
\[
(\mathcal X_1/R)_{\mathbb{\Delta}^{(1)}} \overset \alpha \to (\mathcal X/R)_{q\mathrm{-CRIS}} \quad \mathrm{and} \quad (\mathcal X/R)_{q\mathrm{-CRIS}} \overset \beta \to (\mathcal X_1/R)_{\mathbb{\Delta}^{(1)}} 
\]
such that $\beta \circ \alpha = \mathrm{Id}$ giving rise to a sequence of adjoint functors
\[
\alpha_!, \quad \alpha^{-1} = \beta_!, \quad \alpha_{*} = \beta^{-1}, \quad \beta_*.
\]

The last part of the next lemma is also due to Li (\cite{Li21}, theorem 3.1):

\begin{lem} \label{beta}
If $\mathcal X$ is a formal scheme over $R/\mathfrak r$ then the morphism of topoi
\[
\alpha : (\mathcal X \widehat \otimes_{R/\mathfrak r} R/\mathfrak N_R/R)_{\mathbb{\Delta}^{(1)}} \to (\mathcal X/R)_{q\mathrm{-CRIS}} 
\]
is an exact embedding ($\alpha_*$ is exact and fully faithful).
Moreover, it induces an equivalence between crystals (resp.\ locally finite free crystals) on both sites.
\end{lem}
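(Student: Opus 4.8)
The plan is to deduce both assertions almost formally from the adjunction data recorded just above, namely the identities $\alpha^{-1}=\beta_!$, $\alpha_*=\beta^{-1}$ and $\beta\circ\alpha=\mathrm{Id}$, and to isolate the single genuine computation, which concerns $q$-PD ideals.

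First I would treat the exact embedding. Exactness of $\alpha_*$ is immediate: under the identification $\alpha_*=\beta^{-1}$ it is the inverse image functor of the morphism of topoi $\beta$, hence preserves all colimits and finite limits. For full faithfulness I would use that the right adjoint in an adjunction is fully faithful precisely when its counit is invertible. The counit of $\alpha^{-1}\dashv\alpha_*$ is the natural transformation $\alpha^{-1}\alpha_*\to\mathrm{Id}$; writing $\alpha_*=\beta^{-1}$ identifies it with the canonical comparison $\alpha^{-1}\beta^{-1}=(\beta\circ\alpha)^{-1}\to\mathrm{Id}$, which is an isomorphism since $\beta\circ\alpha=\mathrm{Id}$. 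Thus $\alpha_*$ is fully faithful, i.e.\ $\alpha$ is an exact embedding.

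Next I would set up the equivalence on crystals. Since $\alpha$ and $\beta$ are morphisms of prismatic sites, their inverse image functors $\alpha^{-1}$ and $\alpha_*=\beta^{-1}$ both carry crystals to crystals and preserve finite projectivity of the realizations, which are merely relabelled (for a crystal $E$ one has $(\alpha^{-1}E)_B=E_{(B,\mathfrak N_B)}$ and $(\alpha_*F)_{(B,J)}=F_B$). We already know $\alpha^{-1}\alpha_*=\mathrm{Id}$ from the counit computation, so $\alpha^{-1}$ is a left quasi-inverse; the equivalence will follow once the unit $E\to\alpha_*\alpha^{-1}E$ is shown to be an isomorphism for every crystal $E$ on $q\mathrm{-CRIS}(\mathcal X/R)$. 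This may be tested on realizations, where by the formulas above it becomes the transition map of $E$ attached to the morphism $\eta_{(B,J)}\colon (B,J)\to(B,\mathfrak N_B)$ supplied by the $(\alpha,\beta)$-adjunction, whose underlying $\delta$-ring map is $\mathrm{id}_B$.

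The crux is therefore to exhibit this morphism and to conclude. The key observation is that every $q$-PD ideal satisfies $J\subseteq\mathfrak N_B$: for $f\in J$ the $q$-PD condition gives $\phi(f)=(p)_q\delta(f)+(p)_q j$ with $j\in J$, so $\phi(f)\in(p)_qB$ and hence $f\in\phi^{-1}((p)_qB)=\mathfrak N_B$. Consequently $\mathrm{id}_B$ does define a morphism of $q$-PD pairs $(B,J)\to(B,\mathfrak N_B)$, compatible with the structural maps by construction of $\beta$, and it is precisely the unit $\eta_{(B,J)}$. As $E$ is a crystal, its linearized transition map along $\eta_{(B,J)}$ — which, the underlying ring map being the identity, is simply $E_{(B,J)}\to E_{(B,\mathfrak N_B)}$ — is bijective. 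Hence the unit is an isomorphism on all realizations, giving the equivalence on crystals; restricting to finite projective realizations via Proposition~\ref{prisheaf} yields the locally finite free case. I expect the main obstacle to be the well-definedness underlying this step, namely that $(B,\mathfrak N_B)$ really is a complete bounded $q$-PD pair carrying the correct structural map to $\mathcal X$ (so that $\alpha$, and with it $\eta$, exists); once this and the inclusion $J\subseteq\mathfrak N_B$ are secured, the crystal property does the rest, which is the content of Li's Theorem~3.1 (\cite{Li21}).
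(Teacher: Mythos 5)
Your proposal is correct and follows essentially the same route as the paper: the exact embedding is deduced formally from $\alpha_*=\beta^{-1}$ and $\beta\circ\alpha=\mathrm{Id}$, and the crystal equivalence comes from the realization formulas $(\alpha^{-1}E)_{B}=E_{(B,\mathfrak N_{B})}$ and $(\alpha_*E')_{(B,J)}=E'_{B}$, the unit being the transition map along $(B,J)\to(B,\mathfrak N_B)$ whose underlying ring map is the identity. You merely make explicit two points the paper leaves implicit (that $J\subseteq\mathfrak N_B$ for any $q$-PD-ideal, and why the crystal condition forces the unit to be bijective), which is a faithful elaboration rather than a different argument.
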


\begin{proof}
After the previous discussion, only the last assertion needs a proof, and we recall that $\mathcal X_1 := \mathcal X \widehat \otimes_{R/\mathfrak r} R/\mathfrak N_R$.
If $B'$ is the underlying $\delta$-$R$-algebra of an object of $\mathbb{\Delta}^{(1)}(\mathcal X_1/R)$ and $E$ is a sheaf on $\mathcal X/R$, then we have
\[
(\alpha^{-1}E)_{B'} = E_{(B',\mathfrak N_{B'})}.
\]
On the other hand, if $(B,J)$ is the underlying $q$-PD-pair of an object of $q\mathrm{-CRIS}(\mathcal X/R)$ and $E'$ is a sheaf on $\mathcal X_1/R$, then we have
\[
(\alpha_*E')_{(B,J)} = (\beta^{-1}E')_{(B,J)} = E'_{B}.
\]
The assertion relative to crystals is therefore clear.
\end{proof}

If $E'$ corresponds to $E$ in lemma \ref{beta}, then $\mathrm R\alpha_*E' = \alpha_*E' = E$ and therefore also
\[
\mathrm R\beta_*E = \mathrm R\beta_*\mathrm R\alpha_*E' = \mathrm R(\beta \circ \alpha)_*E'=E'.
\]
In particular, we have
\[
\mathrm R\Gamma((\mathcal X/R)_{q\mathrm{-CRIS}}, E) \simeq \mathrm R\Gamma((\mathcal X_1/R)_{\mathbb{\Delta}^{(1)}}, E')
\]
and
\[
\forall k \in \mathbb Z, \quad \mathrm H^k((\mathcal X/R)_{q\mathrm{-CRIS}}, E) \simeq \mathrm H^k((\mathcal X_1/R)_{\mathbb{\Delta}^{(1)}}, E').
\]

\subsection*{Cartier transform}

We now come to the Cartier transform that we introduced in definition 6.8 of \cite{GrosLeStumQuiros21}.
We fix a $q$-PD-pair $(R, \mathfrak r)$, we let $\mathcal X$ be a formal scheme over $R/\mathfrak r$ and we set
\[
\mathcal X' := \mathcal X \widehat \otimes_{{}_{R/\mathfrak r} \nearrow\overline \phi} R/(p)_q.
\]
We consider the functor
\[
C : q\mathrm{-CRIS}(\mathcal X/R) \to \mathbb{\Delta}(\mathcal X'/R)
\]
that sends the $q$-PD-pair $(B,J)$ to the $\delta$-$R$ algebra $B$ and the structural map $\mathrm{Spf}(B/J) \to \mathcal X$ to
\[
\mathrm{Spf}(B/(p)_q) \to \mathrm{Spf}(B/J) \widehat \otimes_{{}_{R/\mathfrak r} \nearrow\overline \phi} R/(p)_q \to \mathcal X \widehat \otimes_{{}_{R/\mathfrak r} \nearrow\overline \phi} R/(p)_q,
\]
where the first map is induced by $\overline \phi : B/J \to B/(p)_qB$.
This is a morphism of prismatic sites that provides us with a morphism of topoi
\[
C : (\mathcal X/R)_{q\mathrm{-CRIS}} \to (\mathcal X'/R)_{\mathbb{\Delta}}.
\]

The next theorem is essentially due to Li (\cite{Li21}).

\begin{thm} \label{Liplus}
Let $(R,\mathfrak r)$ be a $q$-PD pair and $\mathcal X$ a smooth formal scheme over $R/\mathfrak r$.
Then $\mathrm RC_*$ and $C^{-1}$ induce an equivalence between locally finite free $q$-crystals $E$ on $\mathcal X$ over $R$ and locally finite free prismatic crystals $E'$ on $\mathcal X' := \mathcal X \widehat \otimes_{{}_{R/\mathfrak r} \nearrow\overline \phi} R/(p)_q$ over $R$.
\end{thm}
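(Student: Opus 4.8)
The plan is to realize the Cartier transform $C$ as the composite of the two morphisms of prismatic sites already analysed, the functor $\beta$ and the level-changing functor $\rho$, and then deduce the statement formally from Lemma \ref{beta} and Theorem \ref{Lithm}. Write $\mathcal X_1 := \mathcal X \widehat\otimes_{R/\mathfrak r} R/\mathfrak N_R$ as before. First I would record that, since $(R,\mathfrak r)$ is a $q$-PD-pair, the defining condition gives $\phi(\mathfrak r) \subseteq (p)_q R$, hence $\mathfrak r \subseteq \mathfrak N_R$; the quotient $R/\mathfrak r \to R/\mathfrak N_R$ therefore makes sense, and the composite $R/\mathfrak r \to R/\mathfrak N_R \xrightarrow{\overline\phi} R/(p)_q$ is exactly the map $\overline\phi : R/\mathfrak r \to R/(p)_q$ induced by $\phi$. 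Consequently
\[
\mathcal X_1 \widehat\otimes_{{}_{R/\mathfrak N_R}\nearrow\overline\phi} R/(p)_q \;=\; \mathcal X \widehat\otimes_{{}_{R/\mathfrak r}\nearrow\overline\phi} R/(p)_q \;=\; \mathcal X',
\]
so that $\rho$ is a morphism of topoi $(\mathcal X_1/R)_{\mathbb{\Delta}^{(1)}} \to (\mathcal X'/R)_{\mathbb{\Delta}}$.

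Next I would verify, by unwinding the definitions on objects (and then routinely on morphisms), that $C = \rho \circ \beta$ as morphisms of prismatic sites, hence of topoi. Both functors do nothing at the prism level, so one only has to match structural maps: $\beta$ sends $(B,J)$ to $B$ with the map $\mathrm{Spf}(B/\mathfrak N_B) \to \mathcal X_1$ coming from the quotient $B/J \to B/\mathfrak N_B$ (recall $J \subseteq \mathfrak N_B$ by the $q$-PD condition), and $\rho$ then precomposes with the Frobenius $\overline\phi : B/\mathfrak N_B \hookrightarrow B/(p)_q B$. The resulting ring map $B/J \to B/(p)_q B$ is $\bar f \mapsto \overline{\phi(f)}$, which is precisely the Frobenius $\overline\phi : B/J \to B/(p)_q B$ defining $C$; this identifies the two composite structural maps to $\mathcal X'$.

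With the factorization in hand the equivalence is a formal consequence. Since $\mathcal X$ is smooth over $R/\mathfrak r$, its base change $\mathcal X_1$ is smooth over $R/\mathfrak N_R$, so Theorem \ref{Lithm} applies to $\mathcal X_1$ and, through Proposition \ref{prisheaf}, yields an equivalence between locally finite free $1$-prismatic crystals on $\mathcal X_1/R$ and locally finite free prismatic crystals on $\mathcal X'/R$, with $\mathrm R\rho_* = \rho_*$ exact. On the other hand, Lemma \ref{beta} gives that $\beta$ induces an equivalence between locally finite free $q$-crystals on $\mathcal X/R$ and locally finite free $1$-prismatic crystals on $\mathcal X_1/R$, with $\mathrm R\beta_* E = E'$ concentrated in degree $0$ and quasi-inverse $\alpha_* = \beta^{-1}$. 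As $\rho$ and $\beta$ are morphisms of topoi, $C = \rho\circ\beta$ gives $\mathrm RC_* = \mathrm R\rho_* \circ \mathrm R\beta_*$ and $C^{-1} = \beta^{-1}\circ\rho^{-1}$; stacking the two equivalences, $\mathrm RC_*$ carries a locally finite free $q$-crystal $E$ to the locally finite free prismatic crystal $\rho_*(\mathrm R\beta_* E)$ concentrated in degree $0$, and $C^{-1} = \alpha_*\circ\rho^{-1}$ is its quasi-inverse. The compatibility with cohomology
\[
\mathrm R\Gamma((\mathcal X/R)_{q\mathrm{-CRIS}}, E) \simeq \mathrm R\Gamma((\mathcal X'/R)_{\mathbb{\Delta}}, E')
\]
then follows from the Leray spectral sequence together with the degeneracies already recorded after Lemma \ref{beta} and after Theorem \ref{Lithm}.

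I expect the only genuine subtlety to lie in the careful verification of the factorization $C = \rho \circ \beta$: the identification $\mathcal X_1 \widehat\otimes_{{}_{R/\mathfrak N_R}\nearrow\overline\phi} R/(p)_q = \mathcal X'$ and the bookkeeping showing that the composite Frobenius $B/J \to B/\mathfrak N_B \hookrightarrow B/(p)_q B$ agrees with the Frobenius used to define the Cartier transform. Once this is settled, no further difficulty arises: the equivalence and the cohomological comparison are obtained purely formally by composing the two equivalences of Lemma \ref{beta} and Theorem \ref{Lithm}, which already carry all the hard descent-theoretic input (ultimately the faithfully flat descent of Lemma \ref{faithpro} and the covering statement underlying Li's theorem).
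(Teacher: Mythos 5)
Your proposal is correct and follows essentially the same route as the paper: factor $C = \rho \circ \beta$ through $\mathbb{\Delta}^{(1)}(\mathcal X_1/R)$ and compose the equivalences of Theorem \ref{Lithm} and Lemma \ref{beta}, using Proposition \ref{prisheaf} to see that locally finite free crystals are a topos-theoretic notion. The extra care you take with the factorization (checking $\mathfrak r \subseteq \mathfrak N_R$, identifying $\mathcal X_1 \widehat\otimes_{{}_{R/\mathfrak N_R}\nearrow\overline\phi} R/(p)_q$ with $\mathcal X'$, and matching the Frobenii) is left implicit in the paper but is exactly the right verification.
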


\begin{proof}
We have the following decomposition
\[
C : q\mathrm{-CRIS}(\mathcal X/R) \overset \beta \to \mathbb{\Delta}^{(1)}(\mathcal X_{1}/R) \overset \rho \to \mathbb{\Delta}(\mathcal X'/R),
\]
where $\mathcal X_{1} := \mathcal X \widehat \otimes_{R/\mathfrak r} R/\mathfrak N_R$, $\beta$ is as in lemma \ref{beta} and $\rho$ is as in theorem \ref{Lithm} (but with $\mathcal X_{1}$ instead of $\mathcal X$).
We know from proposition \ref{prisheaf} that a finite locally free crystal is the same thing as a finite locally free sheaf and therefore a topos-theoretic notion.
Thanks to theorem \ref{Lithm}, we are therefore reduced to proving our assertion with $C$ replaced by $\beta$ and this then follows from lemma \ref{beta}.
\end{proof}

As a consequence, we will have in the setting of theorem \ref{Liplus},
\[
\mathrm R\Gamma((\mathcal X/R)_{q\mathrm{-CRIS}}, E) \simeq \mathrm R\Gamma((\mathcal X'/R)_{\mathbb{\Delta}}, E')
\]
(for the trivial crystal, this is due to Bhatt and Scholze (\cite{BhattScholze22}, theorem 16.18)) and therefore also
\[
\forall k \in \mathbb Z, \quad \mathrm H^k((\mathcal X/R)_{q\mathrm{-CRIS}}, E) \simeq \mathrm H^k((\mathcal X'/R)_{\mathbb{\Delta}}, E').
\]

\section{Derived and completed envelopes} \label{sec2}

We review some material from \cite{BhattScholze22} that will be needed afterwards.
We shall try to stay in the realm of ``classical'' constructions and avoid arguments from non abelian derived algebra (see however the thesis \cite{Mao21} of Zouhang Mao for a detailed approach using animated rings).

We let $R$ be a commutative ring and fix some $d \in R$.
An $R$-module $M$ will always be endowed with its $(p,d)$-adic topology and we will write $\overline M = M/dM$.

At some point, we will assume that $R$ is a $\delta$-$\mathbb Z_{(p)}$-algebra and that $d$ is distinguished in $R$.

We refer the reader to the appendix for basic results concerning complete flatness and derived completions.
Let us just recall from definition \ref{compfl} in the appendix that a complex $M^\bullet$ of $R$-modules is said to be \emph{completely (faithfully) flat} if the complex $\overline R/p\overline R \otimes^{\mathrm L}_R M^\bullet$ of $\overline R/p\overline R$-modules is discrete (faithfully) flat.

\begin{dfn} \label{comfr}
An element $g$ in an $R$-algebra $B$ is \emph{completely (faithfully) regular over $R$} if the complex $[B \overset g \to B]$ is completely (faithfully) flat over $R$.
\end{dfn}

\begin{rmks}
\begin{enumerate}
\item The complex $[B \overset g \to B]$ is the Koszul complex $\mathrm{Kos}(B, g)$ (see \cite[\href{https://stacks.math.columbia.edu/tag/0623}{Tag 0623}]{stacks-project}).
We have
\[
[B \overset g \to B] \simeq \mathbb Z \otimes^{\mathrm L}_{\mathbb Z[x]} B,
\]
with $x \mapsto 0$ on the left and $x \mapsto g$ on the right.
In particular,  definition \ref {comfr} generalizes in a straightforward way to the case of a complex of $R$-modules $M^\bullet$ endowed with a sequence of commuting endomorphisms $g_{1}, \ldots, g_{d}$ (which may then be seen as a complex of $\mathbb Z[x_{1}, \ldots, x_{d}]$-modules).
\item It follows from the last remarks before definition \ref{defcomp} in the appendix that the property of being completely (faithfully) regular is stable under derived pullback (and detected by derived pullback under a completely faithfully flat morphism).
It also follows from proposition \ref{flatder} in the appendix that it is invariant under derived completion.
\item
Assume $B$ is completely flat over $R$.
Then, $g$ is completely (faithfully) regular over $R$ if and only if $g$ becomes regular in $\overline B/p\overline B$ and $(\overline {B/gB})/p(\overline {B/gB})$ is (faithfully) flat over $\overline R/p\overline R$.
In particular, the property then only depends on the class of $g$ modulo $(p,d)$.
\item If $B$ is completely flat over $R$ and $g \in B$ is completely (faithfully) regular over $R$, then any power $g^k$ of $g$ is also completely (faithfully) regular over $R$ (use the previous remark and induction).
As a consequence (and using the previous remark again), $\phi(g)$ will also be completely (faithfully) regular over $R$ when $R$ is a $\delta$-ring.
\item We will need this notion of complete regularity only in the very simple following case: if $B$ is a an $R$-algebra, then the variable $\xi$ is completely faithfully regular over $B$ in the polynomial ring $B[\xi]$.
\end{enumerate}
\end{rmks}

So far, we haven't used the fact that $R$ is a $\delta$-ring (nor that $d$ is distinguished).
We will keep our notations from \cite{GrosLeStumQuiros20} and denote\footnote{This is the same thing as $R\{y\}$ in \cite{BhattScholze22}, but we'd rather keep curly brackets for convergent series.} by $R[x]^\delta$ the $\delta$-envelope of $R[x]$, that is, the polynomial ring
\[
R[\{x_{k}\}_{k \in \mathbb N}]
\]
endowed with $\delta(x_{k}) = x_{k+1}$ for $k \in \mathbb N$.
We will systematically use the fact (\cite{BhattScholze22}, lemma 2.11) that the frobenius $\phi :R[x]^\delta \to R[x]^\delta$ is faithfully flat.

If $B$ is a $\delta$-$R$-algebra and $g \in B$, we will write
\[
B[g/d]^{\delta} := R[w]^\delta \otimes^\mathrm{L}_{R[x]^\delta} B
\]
with $x \mapsto dw$ on the left and $x \mapsto g$ on the right (note that this is the derived version).
Observe also that, if $B \to C$ is a morphism of $\delta$-$R$-algebras, then
\[
C \otimes^\mathrm{L}_{B} B[g/d]^{\delta} \simeq C[g/d]^{\delta}
\]
if we still denote by $g$ the image of $g$ in $C$.

We will need the following particular case of proposition 3.13 of \cite{BhattScholze22} (actually we need the faithful version which is not stated in loc.\ cit.):

\begin{thm}[Bhatt-Scholze] \label{lift}
Let $R$ be a $\delta$-$\mathbb Z_{(p)}$-algebra and $d \in R$ distinguished.
If $B$ is a $\delta$-$R$-algebra and $g \in B$ is completely (faithfully) regular over $R$, then the complex $B[g/d]^{\delta}$ is completely (faithfully) flat over $R$.
\end{thm}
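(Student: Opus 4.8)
The plan is to resolve $R[w]^\delta$ over $R[x]^\delta$ by an explicit Koszul complex and then read off complete flatness from the hypothesis on $g$. First I would produce the resolution. Working in $P := R[x]^\delta \otimes_R R[w]^\delta = R[x]^\delta[w]^\delta$, which is free (hence flat) over $R[x]^\delta$, I consider the elements $\delta^n(dw-x)$ for $n \geq 0$. Since $\delta^n(x) = x_n$, each $\delta^n(dw-x)$ has the form $\pm x_n + (\text{a polynomial in } x_0,\dots,x_{n-1} \text{ and the } w_j)$, so the family $\{\delta^n(dw-x)\}_{n\geq 0}$ is a regular sequence whose quotient eliminates every $x_n$ via $x = dw$; that is, $P/(\delta^n(dw-x))_{n} = R[w]^\delta$ with $x$ acting as $dw$. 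The associated (infinite) Koszul complex computes the derived quotient and, the sequence being regular, is a free $R[x]^\delta$-resolution of $R[w]^\delta$ equipped with the module structure $\psi: x \mapsto dw$. Base changing along $R[x]^\delta \to B$, $x \mapsto g$, gives
\[
B[g/d]^\delta \simeq \mathrm{Kos}\bigl(B[w]^\delta;\ \{\delta^n(dw-g)\}_{n\geq 0}\bigr),
\]
so it remains to prove that this Koszul complex is completely (faithfully) flat over $R$.

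Next I would reduce modulo $(p,d)$. Writing $R_0 := R/(p,d)R$ and $C := R_0 \otimes^{\mathrm{L}}_R B$, and using that $R[w]^\delta$ is free over $R$, the special fibre is $\mathrm{Kos}(C[w]^\delta; \{e_n\})$, where $e_n$ is the reduction of $\delta^n(dw-g)$. The heart of the argument is the computation of the $e_n$. One has $e_0 = -g$ (i.e.\ multiplication by $g$ on $C$). For $n \geq 1$ the crucial point is that $d$ is distinguished, so $\delta(d) \in R^\times$: reducing the iterated $\delta$ modulo $(p,d)$ — where the identity $\delta(p) \equiv 1$ turns the $p$-divisible terms into $p$-th powers, exactly the Frobenius phenomenon underlying the faithful flatness of $\phi$ (\cite{BhattScholze22}, Lemma 2.11) — one finds that, modulo the relations $e_0,\dots,e_{n-1}$, the element $e_n$ is monic of degree $p$ in the variable $w_{n-1}$ with unit leading coefficient $\delta(d)^{p^{n-1}}$.

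Finally I would conclude by unwinding the Koszul complex one element at a time. Killing $e_0 = g$ on $C[w]^\delta$ produces $[C \xrightarrow{g} C][w]^\delta$, which is discrete and (faithfully) flat over $R_0$ precisely because $g$ is completely (faithfully) regular over $R$ (Definition \ref{comfr}). Each subsequent $e_n$ $(n \geq 1)$, being monic of degree $p$ with unit leading coefficient in $w_{n-1}$, is a nonzerodivisor whose quotient is finite free of rank $p$ in the variable $w_{n-1}$; such a step preserves discreteness and (faithful) flatness over $R_0$ while using up the variable $w_{n-1}$. Passing to the filtered colimit over $n$ shows that the full Koszul complex is discrete and (faithfully) flat over $R_0$, i.e.\ $B[g/d]^\delta$ is completely (faithfully) flat over $R$. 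The main obstacle is the combinatorial identification of the $e_n$ in the previous paragraph: controlling the iterated $\delta$ modulo $(p,d)$ and pinning down the monic degree-$p$ structure with unit leading coefficient is the only genuinely computational part, and it is exactly there that the distinguishedness of $d$ and the Frobenius faithful flatness are indispensable. The faithful variant follows by carrying the word ``faithfully'' through each step, the rank-$p$ extensions being automatically faithfully flat.
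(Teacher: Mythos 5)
Your proof is correct in outline, but it takes a genuinely different route from the paper's. The paper never resolves $R[w]^\delta$ explicitly: it first uses the faithful flatness of $\phi$ on $\mathbb Z_{(p)}[x]^\delta$ to reduce to the case $d=\phi(e)$, $g=\phi(h)$, then interposes the divided-power algebra $S=R[d/p]_0^\delta$, on which $u=d/p$ is a unit modulo $p$ (this is where distinguishedness of $d$ enters), so that division by $d$ becomes division by $p$; the $p$-case (lemma \ref{pcase}) is then handled via simplicial $\delta$-ring resolutions and the identification of $B[\phi(g)/p]^\delta$ with the classical PD-envelope, which yields an explicit free module $F=\oplus_n Rv_n$, $v_n\mapsto g^{[pn]}$. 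You instead resolve $R[w]^\delta$ over $R[x]^\delta$ by the Koszul complex on the generators $\delta^n(dw-x)$ of the $\delta$-ideal (these do form a triangular, hence regular, sequence in $R[x]^\delta[w]^\delta$, each introducing $x_n$ linearly with unit coefficient), and then compute the reductions $e_n$ modulo $(p,d)$ directly. Your stated shape of $e_n$ is correct: since $\phi^{n}(d)\equiv 0$ and $\phi^{n-1}(\delta(d))\equiv\delta(d)^{p^{n-1}}$ modulo $(p,d)$, one gets $e_0=-g$ and, for $n\ge 1$, $e_n\equiv\delta(d)^{p^{n-1}}w_{n-1}^p+(\text{terms in }w_0,\dots,w_{n-2})$, all cross-terms involving $w_{n-1}$ carrying the vanishing factor $\phi^{n-1}(d)$; this is the one induction you must actually write out, and it amounts to redoing the proof of faithful flatness of $\phi$ on $R[x]^\delta$ (\cite{BhattScholze22}, lemma 2.11) in relative form rather than citing it. Two points deserve explicit care in your write-up: first, $R_0\otimes^{\mathrm L}_RB$ need not be discrete — only $\mathrm{Kos}(B,g)\otimes^{\mathrm L}_RR_0$ is, by definition \ref{comfr} — so the dévissage must begin, as you do, by killing $e_0=g$ before treating anything as an honest ring; second, for the faithful variant the colimit argument needs the observation you make parenthetically, namely that each rank-$p$ step is a split injection, since a filtered colimit of faithfully flat algebras along arbitrary maps need not be faithfully flat. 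What your approach buys is a self-contained, divided-power-free and simplicial-resolution-free argument that makes the role of $\delta(d)\in R^\times$ completely transparent and generalizes verbatim to regular sequences; what it costs is the combinatorial control of $\delta^n(dw-g)$ modulo $(p,d)$, which the paper's softer reductions avoid entirely.
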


\begin{proof}
Since the frobenius $\phi : \mathbb Z_{(p)}[x]^\delta \to \mathbb Z_{(p)}[x]^\delta$ is faithfully flat, 
we can first replace $R$ with
\[
\mathbb Z_{(p)}[x]^\delta {}_{{}_{\phi}\nwarrow}\!\!\otimes_{\mathbb Z_{(p)}[x]^\delta} R,
\]
where, on the right, $x \mapsto d$, and assume that there exists $e \in R$ such that $d=\phi(e)$.
For the same reason, we can replace $B$ with
\[
R[x]^\delta {}_{{}_{\phi}\nwarrow}\!\!\otimes_{R[x]^\delta} B
\]
where, on the right, $x \mapsto g$ and assume that there exists $h \in B$ such that $g = \phi(h)$.

If we denote by $\mathbb Z_{(p)}[x]^\mathrm{PD}$ the divided power envelope of $(x)$ in $\mathbb Z_{(p)}[x]$, we can consider the obvious factorization of the projection:
\[
\mathbb Z_{(p)}[x] \to \mathbb Z_{(p)}[x]^\mathrm{PD} \to {\mathbb Z_{(p)}[x]^\mathrm{PD}}/p {\mathbb Z_{(p)}[x]^\mathrm{PD} }\to \mathbb F_{p}.
\]
We can pull it back (classically) to $R$ (and to $\overline R$ on the right) along the map $x \mapsto e$ and, writing
\[
S := R \otimes_{\mathbb Z_{(p)}[x] } \mathbb Z_{(p)}[x]^\mathrm{PD},
\]
obtain a factorisation of the projection:
\[
R \to S \to \overline S/p\overline S \to \overline R/p\overline R.
\]
It is then sufficient to prove that the complex $S \otimes^{\mathrm L}_R B[g/d]^{\delta}$ is completely (faithfully) flat over $S$.
We can deduce from the interpretation of divided powers in terms of $\delta$-structures (lemma 2.36 of \cite{BhattScholze22}) that

\begin{align*}
S &= R \otimes_{\mathbb Z_{(p)}[x] } \mathbb Z_{(p)}[x]^\mathrm{PD}
\\& \simeq R \otimes_{\mathbb Z_{(p)}[x]^\delta } \mathbb Z_{(p)}[x]^\mathrm{\delta,PD} \\ &\simeq R \otimes_{\mathbb Z_{(p)}[x]^\delta } \mathbb Z_{(p)}[x, \phi(x)/p]^\delta
\\& \simeq R[d/p]_0^\delta.
\end{align*}
If we set $u := d/p \in S$, then $\delta(d) =\delta(pu) = \delta(p)u^p + p\delta(u)$.
Since $\delta(d) , \delta(p) \in R^\times$, we can write $u^p = v(1 - f)$ with $v \in R^\times$ and $f \equiv 0 \mod p$.
It follows that $u$ acts invertibly modulo $p$ and therefore
\[
\overline S/p\overline S \otimes^{\mathrm L}_R B[g/d]^{\delta} \simeq \overline S/p\overline S \otimes^{\mathrm L}_R B[g/p]^{\delta}.
\]
It is thus sufficient to show that $S \otimes^{\mathrm L}_R B[g/p]^{\delta}$ is completely (faithfully) flat over $S$ or even that $B[g/p]^{\delta}$ is completely (faithfully) flat over $R$.
Since we were careful enough to make sure that we can write $g = \phi(h)$, this follows from lemma \ref{pcase} below (with $h$ in the place of $g$).
\end{proof}

\begin{lem} \label{pcase}
If $B$ is a $\delta$-$R$-algebra and $g \in B$ is such that $\phi(g)$ is completely (faithfully) regular over $R$, then the complex $B[\phi(g)/p]^{\delta}$ is completely (faithfully) flat over $R$.
\end{lem}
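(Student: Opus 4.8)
The plan is to identify $B[\phi(g)/p]^\delta$ with a (derived) divided power envelope of $g$, and then to reduce the complete flatness statement, after base change to characteristic $p$, to the single Koszul complex that the hypothesis already controls. To begin with, I would make the formation of $B[\phi(g)/p]^\delta$ functorial in the pair $(B,g)$. Since $\phi(g)$ is the image of $\phi(x)$ under the $\delta$-morphism $R[x]^\delta \to B$, $x \mapsto g$, the base change property recalled before the statement gives
\[
B[\phi(g)/p]^\delta \simeq R[x]^\delta[\phi(x)/p]^\delta \otimes^{\mathrm L}_{R[x]^\delta} B .
\]
Now $R[x]^\delta[\phi(x)/p]^\delta = R \otimes_{\mathbb Z_{(p)}} \mathbb Z_{(p)}[x]^\delta[\phi(x)/p]^\delta$, and by the interpretation of divided powers in terms of $\delta$-structures (lemma 2.36 of \cite{BhattScholze22}, already invoked in the proof of theorem \ref{lift}) the derived $\delta$-envelope $\mathbb Z_{(p)}[x]^\delta[\phi(x)/p]^\delta$ is discrete and equals the $\delta$-PD-envelope $\mathbb Z_{(p)}[x]^\delta \otimes_{\mathbb Z_{(p)}[x]} \mathbb Z_{(p)}[x]^\mathrm{PD}$. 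Combining these, I obtain the key structural identification
\[
B[\phi(g)/p]^\delta \simeq R[x]^\mathrm{PD} \otimes^{\mathrm L}_{R[x]} B, \qquad x \mapsto g,
\]
that is, $B[\phi(g)/p]^\delta$ is the derived divided power envelope of the element $g$.

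Next I would test complete flatness over $R$ by base change to $k := \overline R/p\overline R$. Since $R[x]^\mathrm{PD} = \bigoplus_n R\,\gamma_n(x)$ is free over $R$, reduction is monoidal and commutes with the divided power envelope, giving
\[
k \otimes^{\mathrm L}_R B[\phi(g)/p]^\delta \simeq k[x]^\mathrm{PD} \otimes^{\mathrm L}_{k[x]} B', \qquad B' := k \otimes^{\mathrm L}_R B, \quad x \mapsto g .
\]
The heart of the argument is the structure of $k[x]^\mathrm{PD}$ in characteristic $p$: as a $k$-algebra it splits as $\bigotimes_{a \ge 0} k[\gamma_{p^a}(x)]/(\gamma_{p^a}(x)^p)$, and because $x = \gamma_1(x)$ acts only on the $a=0$ factor this yields an isomorphism of $k[x]$-modules $k[x]^\mathrm{PD} \simeq (k[x]/(x^p)) \otimes_k C$, where $C := \bigotimes_{a \ge 1} k[\gamma_{p^a}(x)]/(\gamma_{p^a}(x)^p)$ is free over $k$ and carries the trivial $k[x]$-action. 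Feeding this into the previous formula and using the resolution $0 \to k[x] \xrightarrow{x^p} k[x] \to k[x]/(x^p) \to 0$, I get
\[
k \otimes^{\mathrm L}_R B[\phi(g)/p]^\delta \simeq \mathrm{Kos}(B', g^p) \otimes_k C .
\]
Finally, because $\phi(g) - g^p = p\,\delta(g)$ vanishes in the characteristic $p$ algebra $B'$, we have $\mathrm{Kos}(B', g^p) = \mathrm{Kos}(B',\phi(g)) = k \otimes^{\mathrm L}_R [B \xrightarrow{\phi(g)} B]$, which is discrete and (faithfully) flat over $k$ precisely by the assumption that $\phi(g)$ is completely (faithfully) regular over $R$. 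Tensoring a discrete (faithfully) flat $k$-module with the nonzero free $k$-module $C$ preserves discreteness and (faithful) flatness, so $B[\phi(g)/p]^\delta$ is completely (faithfully) flat over $R$.

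I expect the main obstacle to be the first step: pinning down the structural identification of the $\delta$-envelope $B[\phi(g)/p]^\delta$ with the derived divided power envelope of $g$, and in particular the discreteness of $\mathbb Z_{(p)}[x]^\delta[\phi(x)/p]^\delta$, which is exactly where the $\delta$-theoretic description of divided powers enters. Once this is secured, the passage to characteristic $p$ is what makes the lemma work: the char-$p$ decomposition of $k[x]^\mathrm{PD}$ turns the division of the Frobenius $\phi(g)$ by $p$ into the Koszul complex on $g^p = \phi(g)$, after which the hypothesis closes the argument at once.
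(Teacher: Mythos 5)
Your proof is correct and rests on the same two pillars as the paper's: the identification of $B[\phi(g)/p]^{\delta}$ with a (derived) divided power envelope via the $\delta$-theoretic description of divided powers, and the characteristic~$p$ splitting of that divided power algebra into the Koszul complex on $g^p=\phi(g)$ tensored with a free module on the higher divided powers $\gamma_{pn}(g)$ (your $C$ is exactly the reduction of the paper's free module $F=\oplus_n Rv_n$, $v_n \mapsto g^{[pn]}$). Where you genuinely differ is in how the derived envelope is tamed. The paper chooses flat simplicial $\delta$-resolutions $R_{\bullet}$ and $B_{\bullet}$ so that the comparison with the PD envelope applies levelwise, and then passes to the limit via Dold--Kan; you instead apply that comparison once, to the universal $p$-torsion free case $\mathbb Z_{(p)}[x]^{\delta}$ with its regular element $x$, and transport the conclusion to $B$ by the derived base-change property of $-[g/d]^{\delta}$ recorded just before theorem \ref{lift}. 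Your route avoids simplicial resolutions entirely, which is in the spirit of the paper's stated preference for ``classical'' constructions; the price is that you must check that $B[\phi(g)/p]^{\delta}$ is computed by base change from $\mathbb Z_{(p)}[x]^{\delta}$ and that reduction mod $p$ commutes with everything in sight, which does hold because $\mathbb Z_{(p)}[x]^{\delta}$ and $\mathbb Z_{(p)}[x]^{\mathrm{PD}}$ are free over $\mathbb Z_{(p)}$ (so no flatness of $R$ over $\mathbb Z_{(p)}$ is needed, and this is precisely why the paper also resolves $R$ and not just $B$). One small reference point: the statement you need for the discreteness of $\mathbb Z_{(p)}[x]^{\delta}[\phi(x)/p]^{\delta}$ and its identification with the PD envelope is corollary 2.39 of \cite{BhattScholze22} (which is what the paper's proof cites), rather than lemma 2.36, which is only the underlying criterion for the existence of divided powers.
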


\begin{proof}
Let us choose flat simplicial resolutions $R_{\bullet}$ over $\mathbb Z_{(p)}$ and $B_{\bullet}$ over $R_{\bullet}[x]^\delta$ of $R$ and $B$ respectively in the category of $\delta$-rings.
Fix some $i \in \mathbb N$.
The image $g_i$ of $x$ in $B_i$ is regular modulo $p$ and $B_i$ is $p$-torsion free.
Therefore, it follows from corollary 2.39 of \cite{BhattScholze22} that there exists an  isomorphism with the divided power envelope of $g_i$ in $B_i$:
\[
B_i[\phi(g_i)/p]^{\delta} \simeq B_i^\mathrm{PD}.
\]
Now, we consider the free $R$-module $F := \oplus_{n \in \mathbb N} Rv_{n}$ and send $v_{n}$ to $g_i^{[pn]}$.
It induces an isomorphism:
\[
 B_i/\phi(g_i) B_i \otimes_{R} F/pF \simeq B_i^\mathrm{PD}/pB_i^\mathrm{PD}.
\]
If we identify a simplicial complex with the corresponding chain complex via the Dold-Kan correpondence, then we have
\[
B_\bullet/\phi(g_\bullet)B_\bullet \simeq [B \overset {\phi(g)} \to B] \quad \mathrm{and} \quad B_{\bullet}[\phi(g_{\bullet})/p]^{\delta} \simeq B[\phi(g)/p]^{\delta},
\]
and therefore
\[
\overline R/p\overline R \otimes_{R}^{\mathrm L} [B \overset {\phi(g)} \to B] \otimes_{R} F \simeq \overline R/p\overline R \otimes_{R}^{\mathrm L} B[\phi(g)/p]^{\delta}.
\]
Our hypothesis implies that the left hand side is (faithfully) flat over $\overline R/p\overline R$ and the same thing therefore also holds for the right hand side.
\end{proof}

\begin{rmks}
\begin{enumerate}
\item The proof of theorem \ref{lift} (and lemma \ref{pcase}) follows exactly the same pattern as in \cite{BhattScholze22} (lemma 2.43, corollary 2.44 and proposition 3.13).
It is however self-contained and only requires the interpretation of classical divided powers in term of $\delta$-structures
\item Lemma \ref{pcase} is still valid if we replace the condition that $\phi(g)$ is completely (faithfully) regular by the same condition on $g$ as long as $B$ is completely flat, because the condition is then stronger.
\item Of course, theorem \ref{lift}, as well as the lemma, hold more generally for a completely (faithfully) regular sequence, but we shall only need this simple case.
They also hold in the nonoriented situation: we can replace $(d)$ with an invertible ideal.
\item For a more general statement than theorem \ref{lift}, the reader may consider proposition 5.49 of \cite{Mao21}.
\end{enumerate}
\end{rmks}

If $B$ is a $\delta$-$R$-algebra and $g \in B$, we will set $B[g/d]_0 = R[w] \otimes_{R[x]} B$, with $x \mapsto dw$ on the left and $x \mapsto g$ on the right (we use the subindex $0$ to make clear that we do not mean a derived version).
Then, if $B[g/d]_0^{\delta}$ denotes the $\delta$-envelope of $B[g/d]_0$ (in contrast with the derived version), we have
\[
B[g/d]_0^{\delta} \simeq \mathrm H^0\left( B[g/d]^{\delta}\right) = R[w]^\delta \otimes_{R[x]^\delta} B.
\]
We will denote by $B[g/d]_0^{\delta,\wedge}$ its classical completion.

\begin{cor} \label{extracor}
In the situation of theorem \ref{lift}, if $R$ is bounded (with respect to $p$ modulo $d$: see definition \ref{defbound} in the appendix), then $B[g/d]_0^{\delta, \wedge}$ is a formally (faithfully) flat $\delta$-$R$-algebra.
In particular, it is bounded.
\end{cor}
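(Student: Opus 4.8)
The plan is to take theorem \ref{lift} as the single nontrivial input and to spend the rest of the argument converting the \emph{derived} complete-flatness of $B[g/d]^{\delta}$ into the \emph{classical} formal flatness of its $\mathrm H^0$. Theorem \ref{lift} tells us that the derived object $B[g/d]^{\delta}$ is completely (faithfully) flat over $R$. The first step is to pass to its derived completion: by the invariance of complete flatness under derived completion (the remark following definition \ref{comfr}, resting on proposition \ref{flatder}) the completion is again completely (faithfully) flat, and it is of course derived complete. The goal then becomes to show that this derived-complete, completely flat complex is discrete and that its single cohomology module is exactly $B[g/d]_0^{\delta,\wedge}$.

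The main obstacle is precisely this discreteness, and it is here that the boundedness of $R$ is essential. By definition \ref{compfl}, complete flatness means that $\overline R/p\overline R \otimes^{\mathrm L}_R B[g/d]^{\delta}$ is discrete, so the reduction modulo $(p,d)$ of our complex is concentrated in degree $0$; over the bounded ring $R$ a derived-complete complex whose reduction mod $(p,d)$ is discrete is itself discrete, by a derived Nakayama argument. I would use this to conclude that the derived completion has no higher cohomology and therefore coincides with its $\mathrm H^0$. Since $\mathrm H^0(B[g/d]^{\delta}) = B[g/d]_0^{\delta}$, and since for bounded modules derived completion agrees with the classical one (proposition \ref{compcomp}), this $\mathrm H^0$ is identified with the classical completion $B[g/d]_0^{\delta,\wedge}$.

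Once discreteness is in hand the flatness conclusion is formal: $B[g/d]_0^{\delta,\wedge}$ is now a classically complete $R$-module which is simultaneously derived complete and completely (faithfully) flat, so theorem \ref{compform} of the appendix (equivalently, proposition \ref{flatflat} in the oriented case) yields that it is formally (faithfully) flat. Its $\delta$-structure and its $R$-algebra structure are inherited from $B[g/d]_0^{\delta}$, so it is a formally (faithfully) flat $\delta$-$R$-algebra, as asserted.

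Finally, for the boundedness statement I would invoke the propagation of boundedness under flat base change. Since $d$ is distinguished, $R$ is $d$-torsion free, and by definition \ref{defbound} the reduction $\overline R$ has bounded $p^{\infty}$-torsion (definition \ref{inftors}); formal flatness of $R \to B[g/d]_0^{\delta,\wedge}$ then preserves $d$-torsion-freeness and transports the bound on the $p^{\infty}$-torsion of $\overline R$ to that of the reduction of $B[g/d]_0^{\delta,\wedge}$. Hence $B[g/d]_0^{\delta,\wedge}$ is $d$-torsion free with bounded $p^{\infty}$-torsion, i.e.\ bounded.
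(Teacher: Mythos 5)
Your overall architecture is the same as the paper's: theorem \ref{lift} gives complete (faithful) flatness of the derived object, derived completion preserves this (proposition \ref{flatder}), and theorem \ref{compform} converts ``derived complete and completely flat over the bounded ring $R$'' into ``classically complete and formally flat'' --- and, via proposition \ref{flatflat}, bounded. The one step that does not work as written is your identification of the $\mathrm H^0$ of the derived completion with the classical completion $B[g/d]_0^{\delta,\wedge}$. You justify it by applying proposition \ref{compcomp} to the module $B[g/d]_0^{\delta} = \mathrm H^0(B[g/d]^{\delta})$, but that proposition requires the module in question to be bounded, and nothing in the hypotheses gives you any control over $B[g/d]_0^{\delta}$ itself; the remark immediately following the corollary in the paper insists on precisely this point (``we cannot say much about $B[g/d]_0^{\delta}$ itself, or its derived completion either''). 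The correct tool is lemma \ref{tenschek}: since $B[g/d]^{\delta} = R[w]^\delta \otimes^{\mathrm L}_{R[x]^\delta} B$ is a derived tensor product of modules, the $\mathrm H^0$ of its derived completion agrees with the $\mathrm H^0$ of the derived completion of the ordinary tensor product $B[g/d]_0^{\delta}$, and once theorem \ref{compform} tells you that this $\mathrm H^0$ is everything and is classically complete, it is identified with the classical completion $B[g/d]_0^{\delta,\wedge}$.

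Two smaller points. First, ``derived Nakayama'' is not by itself what makes a derived complete, completely flat complex discrete; that is the content of theorem \ref{compform}, whose proof goes through Mittag--Leffler systems --- but since you invoke that theorem anyway, this is only a matter of attribution. Second, your closing argument for boundedness (``formal flatness preserves $d$-torsion-freeness and transports the bound'') is not literally true: a formally flat module over a bounded ring need not be $d$-torsion free without a separatedness hypothesis (compare proposition \ref{torsfr}, which assumes separatedness). Here the cleanest route is proposition \ref{flatflat}: a completely flat module over a bounded ring is automatically formally flat \emph{and} bounded, and complete flatness of $B[g/d]_0^{\delta,\wedge}$ is already in your hands at that stage.
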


\begin{proof}
If (only in this proof) we denote derived completion by $\mathrm L\wedge$ (in contrast with classical completion $\wedge$), then it follows from lemma \ref{tenschek} in the appendix that
\[
B[g/d]^{\delta, \mathrm L\wedge} \simeq B[g/d]^{\delta, \wedge} \simeq  R[w]^\delta \widehat \otimes_{R[x]^\delta} B \simeq B[g/d]_0^{\delta, \wedge}.
\]
Our assertion therefore follows from theorem \ref{lift} and theorem \ref{compform} in the appendix.
\end{proof}

\begin{rmk}
The equality $B[g/d]^{\delta, \wedge} = B[g/d]_0^{\delta, \wedge}$ also holds when $R$ is not bounded.
However, even when $R$ is bounded, we cannot say much about $B[g/d]_0^{\delta}$ itself, or its derived completion either.
Classical completion is necessary.
\end{rmk}

\section{Prismatic stratifications} \label{sec3}

We will introduce in the end the notion of a prismatic stratification on a module and show that, in a local situation, it corresponds to that of a prismatic crystal (see also section 3.1 of \cite{MorrowTsuji20}).

We keep the assumptions and notations from the previous section.
More precisely, we let $R$ be a $\delta$-$\mathbb Z_{(p)}$-algebra $R$ with a distinguished element $d$ so that $(R,d)$ will be our base prism.
An $R$-module $M$ will always be endowed with its $(p,d)$-adic topology and we will write $\overline M = M/dM$. 

At some point, we will assume that $R$ is bounded, we will consider a complete $R$-algebra $A$ with a topologically \'etale coordinate $x$ and we will let $\mathcal X := \mathrm{Spf}(\overline A)$.

\begin{dfn}
The \emph{(bounded) prismatic envelope} of an ideal $J$ in a bounded $\delta$-$R$-algebra $B$ is a complete bounded $\delta$-$R$-algebra $C$ endowed with a morphism $B \to C$ which is universal for the property $JC \subset dC$.
\end{dfn}

We may also say that the prism $(C, dC)$ is the prismatic envelope of $(B, J)$.
It is unclear for the authors under which generality such an envelope exists, but we will show its existence and explicit description in our case of interest (see proposition \ref{pri} below).

\begin{xmp}
If $R$ is bounded in theorem \ref{lift}, then $B[g/d]^{\delta,\wedge}$ is the prismatic envelope of $(g)$ in $\widehat B$.
More precisely, this is a complete bounded $\delta$-$R$-algebra which is isomorphic to $B[g/d]_0^{\delta,\wedge}$ thanks to corollary \ref{extracor}.
The assertion therefore follows from the universal property of $B[g/d]_0^{\delta,\wedge}$.
\end{xmp}

Let us recall the following notation from section 1 of \cite{GrosLeStumQuiros20}: if $A$ is a $\delta$-ring and $I \subset A$ any ideal, then we denote by $I_{\delta}$ the $\delta$-ideal generated by $I$.

\begin{dfn} \label{prispol}
If $B$ is a $\delta$-$R$-algebra and $x \in B$, then the \emph{ring of prismatic polynomials on $B$} (with respect to $x$) is
\[
B[\omega]^\delta_{d} := B[\omega]^\delta/(\delta(x+d\omega))_{\delta}
 \]
 (where $\omega$ is an indeterminate).
\end{dfn}

\begin{rmk}
The ring of prismatic polynomials is universal among $\delta$-$B$-algebras $C$ endowed with some $h \in C$ such that $\delta(x+dh) = 0$ (if we still denote by $x$ the image of $x$ in $C$).
\end{rmk}

\begin{prop} \label{prismenv}
If $B$ be a bounded $\delta$-$R$-algebra and $x \in B$, then $B[\omega]^{\delta,\wedge}_{d}$ is a complete formally faithfully flat $\delta$-$B$-algebra.
Moreover, if we endow the polynomial ring $B[\xi]$ with the unique $\delta$-structure such that $\delta(x + \xi) =0$, then $ B[\omega]^{\delta,\wedge}_{d}$ is the prismatic envelope of $\xi$ in $B[\xi]$.
\end{prop}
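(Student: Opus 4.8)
The plan is to regard $(B,d)$ itself as the base prism and to identify the ring of prismatic polynomials with the $\delta$-envelope construction of the previous section. Since $B$ is a bounded $\delta$-$R$-algebra it is a $\delta$-$\mathbb Z_{(p)}$-algebra in which $d$ remains distinguished (the image of $\delta(d) \in R^\times$ is a unit in $B$, and $\delta$ commutes with the structural map), so Theorem \ref{lift} and Corollary \ref{extracor} are available with $B$ in the role of the base. I will match everything through universal properties rather than through explicit presentations.

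First I would compare universal properties. By the remark following Definition \ref{prispol}, $B[\omega]^\delta_d$ is universal among $\delta$-$B$-algebras $C$ equipped with an element $h$ satisfying $\delta(x + dh) = 0$. On the other side, $B[\xi][\xi/d]_0^\delta$, formed over the base $B$ with the polynomial variable $\xi$ in the role of $g$, is by construction the universal $\delta$-$B[\xi]$-algebra in which $\xi$ becomes divisible by $d$; that is, a $\delta$-$B$-algebra $C$ carrying a $\delta$-$B$-algebra map $B[\xi] \to C$ together with an element $w$ with $\xi \mapsto dw$. The point is that a $\delta$-map out of the $\delta$-ring $B[\xi]$ (whose $\delta$-structure is pinned down by $\delta(x+\xi)=0$) amounts exactly to choosing the image $c$ of $\xi$ subject to $\delta(x + c) = 0$; imposing $c = dw$ turns this constraint into $\delta(x + dw) = 0$. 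Thus both objects represent the same functor, and $\omega \leftrightarrow w$ gives a canonical isomorphism $B[\omega]^\delta_d \simeq B[\xi][\xi/d]_0^\delta$ of $\delta$-$B$-algebras; passing to $(p,d)$-adic completions yields $B[\omega]^{\delta,\wedge}_d \simeq B[\xi][\xi/d]_0^{\delta,\wedge}$.

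It then remains to feed this through the machine of Section \ref{sec2}. The variable $\xi$ is completely faithfully regular over $B$ in $B[\xi]$ (the simple case recorded in the last of the remarks following Definition \ref{comfr}, which concerns only the underlying Koszul complex and ignores the $\delta$-structure). Applying Theorem \ref{lift} with base $B$ shows that $B[\xi][\xi/d]^\delta$ is completely faithfully flat over $B$, and since $B$ is bounded, Corollary \ref{extracor} upgrades this to the statement that $B[\xi][\xi/d]_0^{\delta,\wedge}$ is a complete, formally faithfully flat, hence bounded, $\delta$-$B$-algebra. Through the isomorphism above, this is the first assertion. For the second, the example in Section \ref{sec3} following the definition of the prismatic envelope identifies $B[\xi][\xi/d]^{\delta,\wedge} \simeq B[\xi][\xi/d]_0^{\delta,\wedge}$ with the prismatic envelope of $(\xi)$ in $\widehat{B[\xi]}$; since the universal property of a prismatic envelope only sees maps into complete algebras, this coincides with the prismatic envelope of $\xi$ in $B[\xi]$, and a routine check shows the envelope over $B$ agrees with the one over $R$ (a complete bounded $\delta$-$R$-algebra receiving $B[\xi]$ is automatically a $\delta$-$B$-algebra, so the two represent the same functor).

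The main obstacle I expect is the universal-property bookkeeping of the second paragraph: one must verify that the chosen $\delta$-structure $\delta(x+\xi)=0$ on $B[\xi]$ is precisely what converts the divisibility condition $\xi = dw$ into the relation $\delta(x+dw)=0$ cutting out $B[\omega]^\delta_d$, and that the $\delta$-ideal $(\delta(x+d\omega))_\delta$ of Definition \ref{prispol} encodes exactly this relation and nothing more. Once the two functors are seen to coincide, the flatness and envelope statements follow formally from the already-established Theorem \ref{lift}, Corollary \ref{extracor} and the subsequent example.
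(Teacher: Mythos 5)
Your proposal is correct and follows essentially the same route as the paper: identify $B[\omega]^{\delta}_{d}$ with $B[\xi][\xi/d]_0^{\delta}$ (the paper does this directly via $\omega \mapsto \xi/d$ using $d$-torsion freeness, you do it by matching universal properties, which amounts to the same thing), then invoke theorem \ref{lift} and corollary \ref{extracor} with $B$ as base for the flatness, and the universal property of $B[\xi][\xi/d]_0$ for the envelope statement. Your version is somewhat more detailed on the universal-property bookkeeping but introduces no new idea.
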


\begin{proof}
Recall that we have defined for $\xi \in B[\xi]$ the rings $B[\xi][\xi/d]_0$ and $B[\xi][\xi/d]_0^{\delta}$ where the subindex $0$ means that we do not consider the derived version.
Then, since $B$ is $d$-torsion free, there exists an isomorphism
\[
B[\omega]^{\delta}_{d} \simeq B[\xi][\xi/d]_0^{\delta}, \quad \omega \mapsto \xi/d.
\]
It then follows from corollary \ref{extracor} that $B[\omega]^{\delta,\wedge}_{d}$ is completely faithfully flat over $B$.
The last assertion follows from the universal property of $B[\xi][\xi/d]_0$. 
\end{proof}

We will need the following elementary result:

\begin{lem} \label{dletext}
Let $A' \to A$ be a formally étale morphism of $\delta$-rings and let $B$ classically $p$-adically complete ring.
For a ring morphism $A \to B$ to be a morphism of $\delta$-rings, it is sufficient that the composite map $A' \to B$ is a morphism of $\delta$-rings.
\end{lem}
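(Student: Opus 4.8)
The plan is to recast the statement entirely in terms of Frobenius lifts and then run a $p$-adic successive-approximation argument controlled by formal étaleness. Recall that a $\delta$-structure is the same datum as a lift of Frobenius $\phi$, and that a ring homomorphism between $\delta$-rings is a morphism of $\delta$-rings exactly when it commutes with these Frobenius lifts. Writing $\phi_A$ and $\phi_B$ for the Frobenius of $A$ and of $B$, it therefore suffices to show that the two ring homomorphisms
\[
g_1 := \phi_B \circ f \qquad\text{and}\qquad g_2 := f \circ \phi_A
\]
from $A$ to $B$ coincide. Both are genuine ring maps, being composites of ring maps.

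First I would record two elementary facts that serve as the input. Since $\phi_A(a)\equiv a^p$ and $\phi_B(b)\equiv b^p$ modulo $p$, both $g_1$ and $g_2$ carry $a$ to $f(a)^p$ modulo $p$, so that $g_1\equiv g_2 \pmod{pB}$; this is the base case of the induction below. Secondly, writing $\iota\colon A'\to A$ for the structural map, we have $g_1\circ\iota=\phi_B\circ(f\iota)$ and $g_2\circ\iota=f\circ\phi_A\circ\iota=(f\iota)\circ\phi_{A'}$, using that $\iota$ is a $\delta$-morphism. By hypothesis $f\iota$ is a morphism of $\delta$-rings, so $\phi_B\circ(f\iota)=(f\iota)\circ\phi_{A'}$, whence $g_1\circ\iota=g_2\circ\iota$. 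Thus $g_1$ and $g_2$ agree modulo $p$ and have the same restriction to $A'$; in particular $B/p^{n}B$ acquires, for every $n$, a well-defined $A'$-algebra structure through this common restriction.

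The key step is to propagate the congruence from modulo $p^n$ to modulo $p^{n+1}$ by induction. Assume $g_1\equiv g_2 \pmod{p^{n}B}$ with $n\ge 1$. Then $p^{n}B/p^{n+1}B$ is a square-zero ideal of $B/p^{n+1}B$ (because $2n\ge n+1$), so $B/p^{n+1}B\to B/p^{n}B$ is a square-zero thickening of $A'$-algebras. The reductions $g_1\bmod p^{n+1}$ and $g_2\bmod p^{n+1}$ both lie in $\mathrm{Hom}_{A'}(A,\,B/p^{n+1}B)$, and both map to the single element $\bar g\in\mathrm{Hom}_{A'}(A,\,B/p^{n}B)$ obtained from the inductive hypothesis. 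By formal étaleness of $\iota$, the map $\mathrm{Hom}_{A'}(A,\,B/p^{n+1}B)\to\mathrm{Hom}_{A'}(A,\,B/p^{n}B)$ is bijective, so its injectivity forces $g_1\equiv g_2 \pmod{p^{n+1}B}$.

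Finally, since $B$ is classically $p$-adically complete, hence $p$-adically separated, the congruences $g_1\equiv g_2\pmod{p^{n}B}$ for all $n$ give $g_1=g_2$, which is precisely the $\delta$-compatibility of $f$. The only delicate point is organizing the inductive step so that the uniqueness clause of formal étaleness genuinely applies: one must check that $g_1$ and $g_2$, reduced modulo $p^{n+1}$, are two solutions of \emph{one and the same} lifting problem against the square-zero extension $B/p^{n+1}B\to B/p^{n}B$, which is exactly what the two preliminary observations (common reduction modulo $p^n$ and common restriction to $A'$) guarantee.
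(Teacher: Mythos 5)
Your overall strategy (successive $p$-adic approximation controlled by the uniqueness clause of formal étaleness) is the same idea the paper uses, but your opening reduction contains a genuine gap. You assert that a $\delta$-structure is the same datum as a Frobenius lift $\phi$, and that a ring map between $\delta$-rings is a $\delta$-morphism exactly when it intertwines the Frobenius lifts. Both claims are only valid when the target ring is $p$-torsion free: in general $\delta(b)=(\phi(b)-b^p)/p$ cannot be recovered from $\phi$, and distinct $\delta$-structures can induce the same $\phi$ (already on an $\mathbb F_p$-algebra such as $\mathbb F_p[x]$, where $\phi$ is forced to be the absolute Frobenius but $\delta(x)$ may be chosen arbitrarily). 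The lemma as stated imposes no $p$-torsion-freeness on $B$, and in its application (proposition \ref{pri}) the target is a bounded prism, which is only required to have \emph{bounded} $p^{\infty}$-torsion. So establishing $\phi_B\circ f=f\circ\phi_A$ does not suffice to conclude that $f$ is a morphism of $\delta$-rings.

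The paper sidesteps this by encoding a $\delta$-structure on $S$ as a ring section $s_S\colon S\to W_1(S)$ of the projection from the length-$2$ Witt vectors, a description valid without any torsion hypothesis; a ring map $f$ is then a $\delta$-morphism iff $s_B\circ f=W_1(f)\circ s_A$. One compares the two resulting maps $A\to W_1(B)$, which agree on $A'$ and agree after composing with the projection $W_1(B)\to B$, whose kernel is topologically nilpotent, and invokes the uniqueness part of formal étaleness exactly as you do. Your induction would go through essentially verbatim if you ran it for these two maps into $W_1(B)$ (reducing modulo the relevant topologically nilpotent ideal) rather than for $\phi_B\circ f$ and $f\circ\phi_A$; as written, the argument only proves the lemma under the additional hypothesis that $B$ is $p$-torsion free.
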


\begin{proof}
Recall that if we denote by $W_{1}(S)$ the ring of Witt vectors of length $2$ on a ring $S$, then it is equivalent to give a $\delta$-structure on $S$ and to give a section of the projection $W_{1}(S) \to S$.
In our situation, we have
\[
\xymatrix{
W_1(A') \ar[r] \ar[d] & W_1(A) \ar[r] \ar[d] & W_1(B) \ar[d] \\
A' \ar[r] \ar@/^1pc/[u] & A \ar[r]  \ar@/^1pc/[u]& B.  \ar@/^1pc/[u]
}
\]
We want to show that the right hand side square involving the sections is commutative and we know that both other squares are.
Since $A' \to A$ is formally étale and both maps $A \to W_1(B)$ coincide, not only on $A'$, but also modulo $p$, which is topologically nilpotent, they must be equal on $A$.
\end{proof}

We assume now that $R$ is bounded and consider a complete $R$-algebra $A$ with a topologically \'etale\footnote{Or equivalently completely étale, since $R$ is bounded.} coordinate $x$.
We mean that there exists a topologically finitely presented formally \'etale morphism $R[x] \to A$.
We let $\mathcal X := \mathrm{Spf}(\overline A)$.
We endow $A$ with the unique structure of $\delta$-$R$-algebra such that $\delta(x)=0$.
This is a bounded prism on $\mathcal X$ over $R$.

\begin{prop} \label{pri}
If $B$ is a complete bounded $\delta$-$R$-algebra and $A \to B$ is a morphism of $R$-algebras, then $B[\omega]^{\delta,\wedge}_{d}$ is the prismatic envelope of the kernel $J$ of multiplication $B \otimes_{R} A \twoheadrightarrow B$.
\end{prop}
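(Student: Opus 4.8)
The plan is to show, via Yoneda, that $B[\omega]^{\delta,\wedge}_{d}$ and the prismatic envelope of $J$ represent the same functor on complete bounded $\delta$-$R$-algebras. Recall from Proposition \ref{prismenv} that $B[\omega]^{\delta,\wedge}_{d}$ is the prismatic envelope of $\xi$ in $B[\xi] = B \otimes_{R} R[x]$, where the polynomial ring carries the $\delta$-structure with $\delta(x+\xi)=0$. Unwinding this, for a complete bounded $\delta$-$R$-algebra $C$, a morphism $B[\omega]^{\delta,\wedge}_{d} \to C$ is the same as a pair $(f,c)$, where $f\colon B \to C$ is a $\delta$-$R$-morphism and $c \in C$ (the image of $x+\xi = 1\otimes x$) satisfies $\delta(c)=0$ and $c \equiv f(x) \bmod dC$.

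First I would unwind the universal property of the prismatic envelope of $J$. As $B \otimes_{R} A$ is the coproduct of $B$ and $A$ in $\delta$-$R$-algebras, a $\delta$-$R$-morphism $B \otimes_{R} A \to C$ is a pair $(f,g)$ with $f\colon B \to C$ and $g\colon A \to C$. Writing $f|_{A}$ for the composite $A \to B \xrightarrow{f} C$, the ideal $J$ is generated by the elements $1\otimes a - (A \to B)(a)\otimes 1$, whose image in $C$ is $g(a)-f|_{A}(a)$; hence $JC \subset dC$ if and only if $g \equiv f|_{A} \bmod dC$. Thus the prismatic envelope of $J$ represents the functor sending $C$ to the set of such pairs $(f,g)$ with $\overline g = \overline{f|_{A}}\colon A \to C/dC$.

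Next I would exhibit mutually inverse natural transformations between the two functors. In one direction, send $(f,g)$ to $(f,g(x))$; this lands in the target since $\delta(g(x)) = g(\delta x)=0$ and $g(x) \equiv f(x) \bmod dC$. For the converse, given $(f,c)$ I must construct a $\delta$-$R$-morphism $g\colon A \to C$ with $g(x)=c$ reducing to $\overline{f|_{A}}$. The assignment $x \mapsto c$ defines a $\delta$-morphism $R[x] \to C$ (as $\delta(c)=0$) whose reduction mod $d$ agrees with $\overline{f|_{A}}$ on $R[x]$ (because $c \equiv f(x) \bmod dC$); since $R[x] \to A$ is formally étale and $C$ is classically complete, it lifts uniquely to a ring morphism $g\colon A \to C$ reducing to $\overline{f|_{A}}$. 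Crucially, $g$ is then automatically a morphism of $\delta$-rings by Lemma \ref{dletext}, applied to the formally étale morphism $R[x] \to A$, the ($p$-adically complete) ring $C$, and the fact that $g|_{R[x]}$ is a $\delta$-morphism.

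Finally, uniqueness in formal étale lifting shows these transformations are mutually inverse: the round trip $(f,c) \mapsto (f,g) \mapsto (f,g(x)) = (f,c)$ is the identity by construction, and conversely a given $g$ is the unique $\delta$-lift of $\overline{f|_{A}}$ sending $x$ to $g(x)$, hence is recovered. By Yoneda, $B[\omega]^{\delta,\wedge}_{d}$ is the prismatic envelope of $J$. I expect the main obstacle to be the backward construction: producing the formally étale lift along the non-nilpotent thickening $C \to C/dC$ (which requires the completeness of $C$ and successive approximation modulo powers of $(p,d)$) and upgrading it to a $\delta$-morphism, for which Lemma \ref{dletext} is precisely designed.
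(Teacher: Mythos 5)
Your proof is correct and uses exactly the same ingredients as the paper's: the universal property of $B[\omega]^{\delta,\wedge}_{d}$ from proposition \ref{prismenv} applied to $\xi = 1\otimes x - x\otimes 1$, formally \'etale lifting along $R[x]\to A$ to produce the missing factor $A \to C$, and lemma \ref{dletext} to upgrade it to a $\delta$-morphism. The only difference is organizational — you phrase the argument as an identification of functors of points via Yoneda, whereas the paper first constructs the canonical map $B\otimes_R A \to B[\omega]^{\delta,\wedge}_{d}$ and then verifies the universal property directly — so this is essentially the paper's proof.
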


It is \emph{not} assumed here that $A \to B$ is a morphism of $\delta$-rings.
Also the ``$x$'' hidden in the definition of the ring of prismatic polynomials is the image of $x$ in $B$.

\begin{proof}
Since $x$ is a topologically \'etale coordinate, it follows from lemma \ref{dletext} that the $\delta$-morphism
\[
R[x] \to B[\omega]^{\delta}_{d}, \quad x \mapsto x + d\omega
\]
extends to a $\delta$-morphism $\theta: A \to B[\omega]^{\delta,\wedge}_{d}$.
This provides a $\delta$-morphism $B \otimes_{R} A \to B[\omega]^{\delta,\wedge}_{d}$.
Since $\overline \theta$ factors through $\overline B$, the induced map
\[
\overline B \otimes_{\overline R} \overline A \to \overline {B[\omega]^{\delta,\wedge}_{d}}
\]
also factors through $\overline B$.
It means that the image of $J$ is contained in $d B[\omega]^{\delta,\wedge}_{d}$.
Assume now that we are given a complete bounded $\delta$-$R$-algebra $C$ and a morphism $B \otimes_{R} A \to C$ such that $JC \subset dC$.
We endow $B[\xi]$ with the unique $\delta$-structure such that $\delta(x + \xi) = 0$ and consider the $\delta$-morphism
\[
B[\xi] \to B \otimes_{R} A, \quad \xi \to 1 \otimes x - x \otimes 1.
\]
Proposition \ref{prismenv} implies that the composite map $B[\xi] \to B \otimes_{R} A \to C$ extends uniquely to a $\delta$-morphism $B[\omega]^{\delta,\wedge}_{d} \to C$ (with $\xi = d\omega$).
Since $\xi$ is a topologically \'etale coordinate for $B \otimes_R A$ over $B$, the usual argument shows that this morphism does extend the original one.
\end{proof}

\begin{rmks}
\begin{enumerate}
\item It is important to notice that the $\delta$-morphism $\theta : A \to  B[\omega]^{\delta,\wedge}_{d}$ that we consider here is the \emph{Taylor map} $x \mapsto x + d\omega$.
We will systematically write $ B[\omega]^{\delta,\wedge}_{d} \otimes'_{A} - $ when we want to insist on the fact that we use this Taylor map as structural map.
\item
In the particular case $B=A$, we see that $A[\omega]^{\delta,\wedge}_{d}$ is the prismatic envelope of the diagonal in the product $P := A \otimes_{R} A$.
There exists a (left) structure $A \to  A[\omega]^{\delta,\wedge}_{d}$ which is the naive map $x \mapsto x$ and a right structure $\theta : A \to  A[\omega]^{\delta,\wedge}_{d}$ which is the \emph{Taylor map} $x \mapsto x + d\omega$.
\item If, besides the map $A \to B$, we have any morphism $B \to C$ of complete bounded $\delta$-$R$-algebras, then
\[
C [\omega]^{\delta}_{d} \simeq C \otimes_{B} B [\omega]^{\delta}_{d}.
\]
It follows that
\[
B [\omega]^{\delta,\wedge}_{d} [\omega']^{\delta,\wedge}_{d} \simeq B [\omega]^{\delta,\wedge}_{d} \widehat \otimes_{B} B [\omega]^{\delta,\wedge}_{d} \simeq B [\omega]^{\delta,\wedge}_{d} \widehat \otimes'_{A}  A[\omega]^{\delta,\wedge}_{d}
\]
(using both structural maps).
\end{enumerate}
\end{rmks}

\begin{cor} \label{product}
If $B$ is a bounded prism on $\mathcal X$ over $R$, then $ B[\omega]^{\delta,\wedge}_{d}$ is the product of $B$ by $A$ in the prismatic site of $\mathcal X$ over $R$.
\end{cor}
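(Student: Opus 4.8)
The plan is to read Corollary~\ref{product} off Proposition~\ref{pri} by recognizing the universal property of the prismatic envelope as that of a categorical product in $\mathbb{\Delta}(\mathcal X/R)$. Recall that this site is (anti-)equivalent to the category of complete bounded $\delta$-$R$-algebras $C$ equipped with a structural map $\mathrm{str}_C \colon \overline A \to \overline C$, the morphisms being the $R$-algebra maps compatible with the structural maps modulo $d$; consequently the product of two objects in the site is computed by the universal property concerning maps \emph{out} of the corresponding algebras.

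First I would place $B$ in the setting of Proposition~\ref{pri}. As an object of the site, $B$ carries a structural map $\overline A \to \overline B$; since $R[x]\to A$ is formally étale and topologically finitely presented and $B$ is $(p,d)$-adically complete, lifting the image of $x$ to some $b_0 \in B$ and invoking formal étaleness produces a unique $R$-algebra map $\theta\colon A \to B$ lifting it (no $\delta$-compatibility is required, in accordance with Proposition~\ref{pri}). Set $P := B[\omega]^{\delta,\wedge}_d$ with ``$x$'' taken to be $\theta(x)$. Proposition~\ref{pri} identifies $P$ with the prismatic envelope of the kernel $J$ of the multiplication $B\otimes_R A \twoheadrightarrow B$, and Proposition~\ref{prismenv} together with Corollary~\ref{extracor} show that $P$ is a complete bounded $\delta$-$R$-algebra, hence a genuine object of the site. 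The composites of $b\mapsto b\otimes 1$ and $a\mapsto 1\otimes a$ with $B\otimes_R A \to P$ furnish $R$-algebra maps $i_B\colon B\to P$ and $i_A\colon A\to P$, the intended projections $P\to B$ and $P\to A$; since $J\subset dP$, the maps $\overline{i_A}$ and $\overline{i_B}\circ\overline\theta$ from $\overline A$ to $\overline P$ coincide, and this common map is the structural map $\mathrm{str}_P$ making $i_B$ and $i_A$ morphisms over $\mathcal X$.

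The heart of the argument is to match the two universal properties. For any object $C$ of the site, a morphism $C\to P$ over $\mathcal X$ amounts to an $R$-algebra map $\psi\colon P\to C$ with $\overline\psi\circ\mathrm{str}_P=\mathrm{str}_C$. By the universal property of the prismatic envelope such a $\psi$ is the same as a pair $(f,g)$ of $R$-algebra maps $f\colon B\to C$ and $g\colon A\to C$ satisfying $\overline g=\overline f\circ\overline\theta$, and the compatibility condition $\overline\psi\circ\mathrm{str}_P=\mathrm{str}_C$ becomes simply $\overline g=\mathrm{str}_C$. One then checks that these are exactly the conditions expressing that $f$ and $g$ are morphisms $C\to B$ and $C\to A$ over $\mathcal X$, and conversely that any such pair automatically satisfies $\overline g=\overline f\circ\overline\theta=\mathrm{str}_C$; this yields the bijection $\mathrm{Hom}_{\mathcal X}(C,P)\simeq \mathrm{Hom}_{\mathcal X}(C,B)\times \mathrm{Hom}_{\mathcal X}(C,A)$ defining the product.

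I expect the only delicate point to be this last bookkeeping: identifying the prismatic-envelope condition $JC\subset dC$ with the requirement that both projections be compatible over $\mathcal X$, and keeping the variance of the site straight throughout (recall $\mathbb{\Delta}(\mathcal X/R)$ is opposite to the algebra category). The lifting of the structural map via formal étaleness and the verification that $P$ is a bounded prism are routine consequences of the results already established.
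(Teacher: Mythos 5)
Your proposal is correct and follows the same route as the paper: the paper's proof is exactly ``choose a lifting in $B$ of the image of $\overline x$ (whence, by formal \'etaleness, a map $A \to B$) and apply Proposition~\ref{pri}'', with the translation between the universal property of the prismatic envelope and that of the categorical product left implicit. You have simply written out that translation in full (correctly, including the variance of the site and the identification of the condition $JC \subset dC$ with compatibility of both projections over $\mathcal X$); the only nitpick is that morphisms in the site, and hence the maps $\psi$, $f$, $g$ in your bijection, should be $\delta$-$R$-algebra maps rather than mere $R$-algebra maps.
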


\begin{proof}
We choose a lifting in $B$ of the image in $\overline B$ of the class $\overline x$ of $x$ in $\overline A$ and we apply proposition \ref{pri}.
\end{proof}

\begin{prop} \label{covering}
The prism $A$ is a covering of (the final object of the topos associated to) $\mathbb{\Delta}(\mathcal X/R)$ for the \emph{flat} topology.
\end{prop}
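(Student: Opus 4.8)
The plan is to unwind what it means for the prism $A$ to cover the final object and then to produce the covering directly from the prismatic envelope of the diagonal. Writing $*$ for the final sheaf and $h_A$ for the sheaf represented by $A$, the assertion is that $h_A \to *$ is an epimorphism; by the standard criterion this holds if and only if every object $B$ of $\mathbb{\Delta}(\mathcal{X}/R)$ admits a flat covering over which the unique section of $*$ lifts to a section of $h_A$. Spelled out at the level of the site (recalling that we write morphisms in the natural algebra direction and that a covering is a formally faithfully flat morphism), this means: for every complete bounded $\delta$-$R$-algebra $B$ with its structural map to $\mathcal{X}$, I must exhibit a formally faithfully flat morphism $B \to B'$ together with a morphism $A \to B'$ of $\delta$-$R$-algebras whose reductions are compatible with the structural maps.

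First I would choose the cover to be $B' := B[\omega]^{\delta,\wedge}_{d}$, where the $x$ implicit in the ring of prismatic polynomials is any lift to $B$ of the image in $\overline{B}$ of the coordinate of $\overline{A}$, exactly as in the proof of Corollary \ref{product}. By Proposition \ref{prismenv}, the structural morphism $B \to B[\omega]^{\delta,\wedge}_{d}$ is complete and formally faithfully flat, so it is a covering for the flat topology; no separate cartesian check is needed, since the defining ideal of a prism is automatically carried to $d\,B[\omega]^{\delta,\wedge}_{d}$.

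It then remains to produce the morphism to $A$. By Corollary \ref{product}, $B[\omega]^{\delta,\wedge}_{d}$ is the product of $B$ and $A$ in $\mathbb{\Delta}(\mathcal{X}/R)$, and its projection onto the factor $A$ is, at the algebra level, the Taylor map $\theta : A \to B[\omega]^{\delta,\wedge}_{d}$, $x \mapsto x + d\omega$, furnished by Proposition \ref{pri}. This is a morphism of $\delta$-$R$-algebras whose reduction modulo $d$ factors through $\overline{B}$ and is therefore compatible with the structural maps. Combining the two constructions, every object $B$ is covered, for the flat topology, by an object that maps to $A$, which is precisely the statement that $A$ covers the final object.

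Since the substantive input --- the existence, explicit description, and in particular the faithful flatness of the prismatic envelope --- is already contained in Proposition \ref{prismenv} and Corollary \ref{product}, the remaining work is essentially bookkeeping: translating ``covering of the final object'' into the site-level lifting condition and keeping track of the direction of the arrows. The one point I would treat with genuine care is that the covering morphism $B \to B[\omega]^{\delta,\wedge}_{d}$ is faithfully flat and not merely flat, for this is what turns the single-morphism family into an honest covering; this faithful version is exactly what Proposition \ref{prismenv} provides, resting ultimately on the faithful case of Theorem \ref{lift}.
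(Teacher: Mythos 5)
Your proof is correct and follows essentially the same route as the paper: the paper's argument likewise takes $B' := B[\omega]^{\delta,\wedge}_{d}$, invokes its formal faithful flatness over $B$ (via corollary \ref{extracor}, which is what proposition \ref{prismenv} rests on), and uses that it is the product of $B$ by $A$ to obtain the map from $A$. Your additional unwinding of the epimorphism criterion and the explicit identification of the Taylor map as the projection to $A$ are just more detailed bookkeeping of the same argument.
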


\begin{proof}
It means that, in the prismatic site of $\mathcal X$ over $R$, if we are given any $B$, then there exists a formally faithtully flat morphism of prisms $B \to B'$ and a morphism $A \to B'$.
Since we know from corollary \ref{extracor} that the product $B [\omega]^{\delta,\wedge}_{d}$ of $B$ by $A$ is formally faithfully flat over $B$, we can simply choose $B' := B [\omega]^{\delta,\wedge}_{d}$.
\end{proof}

\begin{rmks}
\begin{enumerate}
\item The result will hold more generally with more coordinates and gives a general way to describe locally the prismatic site of any smooth formal scheme $\mathcal X$.
\item The result also holds locally when $(R,I)$ is not orientable: one may always find an orientation after a formally faithfully flat extension of the base.
\item The same proof with $d$ replaced by $p$ shows that $(\mathbb Z_{p}[[x]], x-p)$, endowed with the unique $\delta$-structure such that $\delta(x) = 0$, covers the absolute prismatic site.
\item There exists more sophisticated versions of proposition \ref{covering}, such as proposition 5.56 of \cite{Mao21} or lemma 3.2 of \cite{Tian21}, for example.
The strategy is always the same.
\end{enumerate}
\end{rmks}

In the next definition, we should perhaps say prismatic hyper-stratification, but we drop the prefix because there is nothing like an non-hyper-stratification in this theory.

\begin{dfn} \label{prstr}
A \emph{prismatic stratification} on an $A$-module $M$ is an isomorphism
\[
 A[\omega]^{\delta,\wedge}_{d} \otimes'_{A} M \simeq M \otimes_{A}  A[\omega]^{\delta,\wedge}_{d}
\]
(in which $\otimes'$ indicates again that we use $\theta$ as structural map) satisfying the usual cocycle condition.
\end{dfn}

\begin{prop} \label{equi}
The category of locally finite free prismatic crystals on $\mathcal X$ is equivalent to the category of finite projective $A$-modules endowed with a prismatic stratification.
\end{prop}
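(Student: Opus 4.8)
The plan is to read the proposition as an instance of faithfully flat descent along the covering object $A$, in which a prismatic stratification plays exactly the role of a descent datum relative to the self-product $A[\omega]^{\delta,\wedge}_{d} = A \times A$. First I would construct the functor from crystals to stratified modules. Given a locally finite free prismatic crystal $E$, set $M := E_{A}$; by proposition \ref{prisheaf} this is a finite projective $A$-module. The remarks following proposition \ref{pri} present $A[\omega]^{\delta,\wedge}_{d}$ as the self-product of $A$, equipped with the naive left structure $x \mapsto x$ and the Taylor right structure $\theta : x \mapsto x + d\omega$. Linearizing the two transition maps of $E$ attached to these two projections yields isomorphisms
\[
A[\omega]^{\delta,\wedge}_{d} \otimes_{A} M \xrightarrow{\ \sim\ } E_{A[\omega]^{\delta,\wedge}_{d}} \xleftarrow{\ \sim\ } M \otimes_{A} A[\omega]^{\delta,\wedge}_{d},
\]
whose composite is the required stratification isomorphism $\epsilon$ (the left tensor along $A \to A[\omega]^{\delta,\wedge}_{d}$, the right one along $\theta$, as in definition \ref{prstr}). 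The cocycle condition is obtained by evaluating $E$ on the triple self-product
\[
A[\omega]^{\delta,\wedge}_{d}[\omega']^{\delta,\wedge}_{d} \simeq A[\omega]^{\delta,\wedge}_{d} \widehat\otimes'_{A} A[\omega]^{\delta,\wedge}_{d}
\]
(remarks after proposition \ref{pri}) and comparing the three projections; this is the usual crystalline bookkeeping.

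Conversely, given a finite projective $A$-module $M$ with a prismatic stratification $\epsilon$, I would reconstruct the crystal by descent. The two essential inputs are that $A$ covers the final object of $\mathbb{\Delta}(\mathcal X/R)$ (proposition \ref{covering}) and that, for every bounded prism $B$ on $\mathcal X$ over $R$, the product $B[\omega]^{\delta,\wedge}_{d} = B \times A$ (corollary \ref{product}) is formally faithfully flat over $B$ (proposition \ref{prismenv}). Since $B[\omega]^{\delta,\wedge}_{d}$ carries a canonical map from $A$ (the second projection), the module $M$ produces a finite projective module over it by base change; the stratification $\epsilon$, pulled back along this map, supplies a descent datum over $B[\omega]^{\delta,\wedge}_{d} \widehat\otimes_{B} B[\omega]^{\delta,\wedge}_{d}$ satisfying the cocycle condition, so lemma \ref{faithpro} descends it to a finite projective $B$-module $E_{B}$. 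Using the naturality of $B \mapsto B[\omega]^{\delta,\wedge}_{d}$ in $B$, I would check that these $E_{B}$ are compatible with the transition maps and that the linearized transition maps are bijective, so that $E$ is a crystal with finite projective realizations and hence, by proposition \ref{prisheaf}, a locally finite free prismatic crystal.

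Finally I would verify that the two constructions are mutually quasi-inverse: evaluating the descended crystal at $A$ returns $(M,\epsilon)$ on the nose, while conversely, since a crystal $E$ is already a sheaf and $A$ is a covering object, $E_{B}$ is determined by $E_{A}$ together with its descent datum, so descent of $(E_{A}, \epsilon)$ reproduces $E$. The main obstacle is the backward direction: one must confirm that the descent datum induced by the single isomorphism $\epsilon$ over $A \times A$ is simultaneously coherent over every $B[\omega]^{\delta,\wedge}_{d}$, so that the locally defined $E_{B}$ glue into a genuine global crystal rather than a mere family of modules. This is precisely where the effectivity of formal faithfully flat descent (lemma \ref{faithpro}), the identification of the product and triple product in terms of $A[\omega]^{\delta,\wedge}_{d}$, and the functoriality of the envelope construction carry the argument.
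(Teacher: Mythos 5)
Your proposal is correct and follows exactly the route the paper intends: the paper's proof of proposition \ref{equi} is the single line ``Follows from proposition \ref{covering}'', i.e.\ the standard faithfully flat descent argument along the covering object $A$ that you spell out in full, using corollary \ref{product}, the identification of the triple product in the remarks after proposition \ref{pri}, lemma \ref{faithpro} and proposition \ref{prisheaf}. The only slip is cosmetic: in definition \ref{prstr} it is the \emph{left} factor $A[\omega]^{\delta,\wedge}_{d} \otimes'_{A} M$ that carries the Taylor structure $\theta$, not the right one as your parenthetical states.
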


\begin{proof}
Follows from proposition \ref{covering}.
\end{proof}

\begin{rmks}
\begin{enumerate}
\item
In the same way, one shows that the category of \emph{absolute} prismatic crystals is equivalent to the category of $\mathbb Z_{p}[[x]]$-modules endowed with a ``prismatic stratification'', using this time the ring
\[
\mathbb Z[x, \omega]_{p}^{\delta} := \mathbb Z[x, \omega]^{\delta}/(\delta(x+p\omega))_{\delta}.
\]
\item Proposition 3.7 in \cite{Tian21} provides a generalization of proposition \ref{equi}.
\item
All these results are rather theoretical: it is in general very hard to do any computation with the description we have given of a prismatic envelope.
We will see how to remedy this in a more specific situation in section \ref{sec5}.
\end{enumerate}
\end{rmks}

\section{Prismatic differential operators} \label{sec4}

We will explain a standard technic (see for example construction 6.9 in \cite{BerthelotOgus78}) that can be used in order to compute the cohomology of a prismatic crystal.
For this purpose, we need to introduce the notion of a prismatic differential operator.
We keep the notations as in the previous sections.
Thus, we assume that $R$ is a bounded $\delta$-$\mathbb Z_{(p)}$-algebra and that $d \in R$ is distinguished.
Any $R$-module $M$ is endowed with its $(p,d)$-adic topology and we write $\overline M$ for $M/dM$.

At some point, we will consider a complete bounded $\delta$-$R$-algebra $A$ and write $\mathcal X := \mathrm{Spf}(\overline A)$.
At some point, we will also assume that $A$ has a topologically \'etale coordinate $x$.

We begin with some very general considerations.
Let $T$ be any site (for example a prismatic site) and $\widetilde T$ denote the corresponding topos.
If we denote by $\mathbb 1 := \{0\}$ the final category, then there exists (by definition) a unique functor $e_{T} : T \to \mathbb 1$.
It is obviously cocontinuous and, if we identify the category of (pre-) sheaves on $\mathbb 1$ with the category $\mathbb S\mathrm{ets}$ of sets, then we obtain the (final) morphism of topoi
\[
e_{T} : \widetilde T \to \mathbb S\mathrm{ets}.
\]
We have $e_{T*}(E) = \Gamma(T, E)$ and what we want to compute is $ \mathrm H^k(T, E) = \mathrm R^k e_{T*}(E)$.
One may also notice that $e_T^{-1}(S) = \underline S$ is the ``locally constant'' sheaf associated to the set $S$.

We consider now a complete bounded $\delta$-$R$-algebra $A$ and the final morphism of topoi
\[
e_{A} : A_{\mathbb{\Delta}} \to \mathbb S\mathrm{ets}.
\]
The category $\mathbb{\Delta}(A)$ has the final object $A$ so that now $e_{A*}(E) = E_{A}$ is simply the realization of the sheaf $E$ on $A$.
The site $\mathbb{\Delta}(A)$ comes with its sructural ring $\mathcal O_{\mathbb \Delta(A)}$ given by $\mathcal O_{\mathbb \Delta(A)}(B) = B$.
In particular, we have $\mathcal O_{\mathbb \Delta(A)}(A)=A$ and we can promote our final morphism of topoi to a morphism of ringed topoi
\[
e_{A} : \left( A_{\mathbb{\Delta}}, \mathcal O_{\mathbb \Delta(A)}\right) \to ( \mathbb S\mathrm{ets}, A).
\]
We still have $e_{A*}(E) = E_{A}$, but now we also have $e_{A}^*(M) = \mathcal O_{\mathbb \Delta(A)} \otimes_{\underline A} \underline M$ and we will simply write $\mathcal O_{\mathbb \Delta(A)} \otimes_{A} M$: this is the sheaf associated to the presheaf $B \mapsto B \otimes_{A} M$.
When $M$ is finite free, this presheaf is already a sheaf.
More precisely, we have the fundamental result:

\begin{prop} \label{requ}
The functors $\mathrm Re_{A*}$ and $\mathrm Le_{A}^*$ induce an equivalence between locally finite free prismatic crystals on $A$ and finite projective $A$-modules.
\end{prop}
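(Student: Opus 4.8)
The plan is to show that the \emph{underived} functors $e_A^*$ and $e_{A*}$ are already mutually quasi-inverse on the two subcategories in play, and then to check that passing to $\mathrm L e_A^*$ and $\mathrm Re_{A*}$ changes nothing because both sides turn out to be concentrated in degree $0$. So I would split the work into a purely categorical part (unit and counit are isomorphisms) and a cohomological part (acyclicity), the latter being where the real difficulty lies.

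First I would treat $e_A^*$. For $M$ finite projective over $A$ the presheaf $B \mapsto B \otimes_A M$ has bijective linearized transition maps, hence is a crystal, and its realizations are finite projective over each $B$; by Proposition \ref{prisheaf} its associated sheaf $e_A^*M = \mathcal O_{\mathbb \Delta(A)} \otimes_A M$ is a locally finite free prismatic crystal. Since $M$ is flat, $B \otimes_A M$ already computes the derived tensor product, so $\mathrm Le_A^*M = e_A^*M$ with no higher terms. Conversely, because $A$ is the final object of $\mathbb{\Delta}(A)$, the functor $e_{A*}$ is evaluation at $A$, so $e_{A*}E = E_A$, which is finite projective by the very definition of a locally finite free crystal. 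Next I would compute the unit and counit of the adjunction $(e_A^*, e_{A*})$: the unit $M \to e_{A*}e_A^*M = (e_A^*M)_A = A\otimes_A M = M$ is the identity, while the counit $e_A^*e_{A*}E = e_A^*(E_A) \to E$ is, on each realization, the linearized transition map $B\otimes_A E_A \to E_B$, which is bijective precisely because $E$ is a crystal. This already yields an equivalence of ordinary categories.

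It then remains to see that $\mathrm Re_{A*}$ agrees with $e_{A*}$ on locally finite free crystals, that is, that $\mathrm H^k(\mathbb{\Delta}(A), E) = 0$ for $k>0$; this is the main obstacle, since evaluation at the final object of a site is not exact in general. Since a finite projective module is a direct summand of a finite free one and all functors involved are additive, I would first reduce, using $E \simeq e_A^*(E_A)$, to the case of the structure sheaf, i.e. to proving $\mathrm R\Gamma(\mathbb{\Delta}(A), \mathcal O) = A$. To attack this I would use a faithfully flat covering $A \to B$ of the final object and the associated \v{C}ech--Alexander complex $E_B \to E_{B\widehat\otimes_A B} \to \cdots$, whose augmentation is exact by faithfully flat descent (Lemma \ref{faithpro} together with the sequence \eqref{prodes}); this immediately gives $\mathrm H^0 = E_A$ and kills the naive \v{C}ech cohomology for that covering.

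The delicate point is upgrading this to derived-functor cohomology. Here I would choose $B$ to carry a coordinate, so that all iterated self-products $B \widehat\otimes_A \cdots \widehat\otimes_A B$ are the explicit diagonal prismatic envelopes furnished by Proposition \ref{pri} and Corollary \ref{product}; feeding these into the cohomological-descent spectral sequence reduces the vanishing over the base to the same vanishing over each member of the covering, which I expect to settle either by an acyclicity argument and induction on the cover or, more cleanly, by the linearized (Berthelot--Ogus type) resolution developed in the remainder of this section. Once degree-$0$ concentration is established on both sides, the unit and counit computed above show that $\mathrm Re_{A*}$ and $\mathrm Le_A^*$ are mutually quasi-inverse, which is exactly the asserted equivalence.
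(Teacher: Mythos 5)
Your first two thirds match the paper's proof: the underived adjunction $(e_A^*, e_{A*})$ is an equivalence because the unit is the identity and the counit is the linearized transition map of a crystal, $\mathrm Le_A^*M = e_A^*M$ by flatness of a finite projective module, and the whole statement reduces to the vanishing of $\mathrm H^i(A_{\mathbb\Delta}, \mathcal O_{\mathbb\Delta(A)})$ for $i>0$. You also correctly flag this vanishing as the real difficulty. The gap is that neither of your two proposed ways of closing it actually works. The ``cohomological-descent'' route is not a reduction: the members of the covering and their self-products are again complete bounded $\delta$-algebras, so ``the same vanishing over each member of the covering'' is verbatim the statement you are trying to prove, with no induction that terminates. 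The fallback via ``the linearized resolution developed in the remainder of this section'' is circular: Corollary \ref{jexact} (and hence Corollary \ref{dervr} and the Poincar\'e lemma) is deduced \emph{from} Proposition \ref{requ} applied to $B' = B[\omega]^{\delta,\wedge}_d$, so it cannot be used as an input here. A further, secondary, misstep: Proposition \ref{pri} and Corollary \ref{product} describe products with the coordinate-bearing prism $A$ inside $\mathbb{\Delta}(\mathcal X/R)$; in $\mathbb{\Delta}(A)$ the final object is $A$ itself and the self-products of a covering $B \to C$ are just the completed tensor products $C \widehat\otimes_B C$, so no coordinate and no diagonal envelope enter.

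The correct way to finish --- which is what the paper does, outsourcing it to the proof of corollary 3.12 of \cite{BhattScholze22} and spelling out the mechanism in the remark that follows --- is the \v Cech-to-derived criterion \cite[\href{https://stacks.math.columbia.edu/tag/03F9}{Tag 03F9}]{stacks-project}: it suffices to show that for \emph{every} object $B$ of $\mathbb{\Delta}(A)$ and \emph{every} formally faithfully flat covering $B \to C$, the \v Cech complex $C \to C\widehat\otimes_B C \to \cdots$ has vanishing higher cohomology, which follows from completed faithfully flat descent. Your computation only treats one covering of the final object; verifying \v Cech acyclicity for one covering gives the correct $\mathrm H^0$ and kills the \v Cech cohomology of that covering, but by itself says nothing about the derived functor cohomology. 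The quantification over all objects and all coverings is precisely what makes the criterion applicable, and it is the one ingredient your sketch does not supply.
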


\begin{proof} 
One shows as in proposition \ref{prisheaf} (but it is actually easier here) that $e_{A*}$ and $e_{A}^*$ induce an equivalence between locally finite free prismatic crystals on $A$ and finite projective $A$-modules.
Moreover, if $M$ is finite projective, then it is flat and therefore $\mathrm Le_{A}^*M = e_{A}^*M$.
It only remains to show that $\mathrm R^ie_{A*}E =0$ for $i > 0$ when $E$ is locally finite free.
The question is local and reduces to proving that $\mathrm H^i(A_{\mathbb \Delta}, \mathcal O_{\mathbb \Delta(A)}) =0$ for \emph{any} complete bounded $\delta$-algebra $A$, but this is done in the proof of corollary 3.12 in \cite{BhattScholze22}.
\end{proof}

\begin{rmk}
\begin{enumerate}
\item As a consequence, we get theorem A ($e_{A}^*e_{A*}E =E$) and theorem B ($\mathrm R^ie_{A*}E = 0$ for $i > 0$) for locally finite free prismatic crystals on $A$.
\item In the proof of the proposition, it is actually sufficient to show that the \v Cech cohomology $\check {\mathrm H}^i(A_{\mathbb \Delta}, \mathcal O_{\mathbb \Delta(A)})$ vanishes for any complete bounded $\delta$-algebra $A$ (see \cite[\href{https://stacks.math.columbia.edu/tag/03F9}{Tag 03F9}]{stacks-project}), which is in fact what Bhatt and Scholze do.
\end{enumerate}
\end{rmk}

We consider now the formal scheme $\mathcal X := \mathrm{Spf}(\overline A)$ and the localization functor
\[
j_{A} : \mathbb{\Delta}(A) \to \mathbb{\Delta}(\mathcal X/R).
\]
This is a morphism of prismatic sites which extends to a morphism of topoi 
\[
j_{A} :A_{\mathbb{\Delta}} \to (\mathcal X/R)_{\mathbb{\Delta}}.
\]

\begin{dfn} \label{lindf}
If $M$ is an $A$-module, then the \emph{linearization} of $M$ is the prismatic sheaf $L(M) := j_{A*}e_{A}^*(M)$.
\end{dfn}

\begin{rmk}
If $M$ is a finite projective $A$-module, and we denote by $e_{\mathcal X/R}$ the final morphism on $\mathbb{\Delta}(\mathcal X/R)$, then we have
\[
\Gamma((\mathcal X/R)_{\mathbb{\Delta}}, L(M)) = e_{\mathcal X/R*}L(M) = e_{\mathcal X/R*}j_{A*}e_{A}^* M= e_{A*}e_{A}^* M = M.
\]
It will be essential to prove later in corollary \ref{dervr} a derived version of this statement.
\end{rmk}

We assume from now on that $A$ is endowed with a topologically \'etale coordinate $x$ and we recall that we introduced in definition \ref{prispol} the notion of a ring of prismatic polynomials.
Then, we have:

\begin{prop} \label{jminus}
If $B$ is a bounded prism on $\mathcal X$ over $R$, then $j_{A}^{-1}(B)$ is representable and we have
\[
j_{A}^{-1}(B) = B [\omega]^{\delta,\wedge}_{d}.
\]
\end{prop}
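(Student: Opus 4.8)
The plan is to deduce the statement formally from Corollary~\ref{product} together with the standard behaviour of inverse images under a localization morphism of topoi. The starting point is the observation, already recorded before the statement, that since $\mathcal X = \mathrm{Spf}(\overline A)$ and $A$ is itself a bounded prism on $\mathcal X$ over $R$ (via the identity structural map), the site $\mathbb{\Delta}(A)$ is nothing but the localized (slice) site $\mathbb{\Delta}(\mathcal X/R)/A$ and $j_{A}$ is the corresponding localization functor. I would first recall that, for such a localization, the inverse image of the associated morphism of topoi is computed at the presheaf level by plain restriction: for a sheaf $F$ on $\mathbb{\Delta}(\mathcal X/R)$ and an object $A \to C$ of $\mathbb{\Delta}(A)$, one has $(j_{A}^{-1}F)(A \to C) = F(C)$, and for a sheaf no further sheafification is needed.

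Applying this to the representable object $B$, the presheaf $j_{A}^{-1}(B)$ sends $A \overset s \to C$ to $\mathrm{Hom}_{\mathbb{\Delta}(\mathcal X/R)}(B, C)$. The key step is to recognize this functor as the one represented by $B[\omega]^{\delta,\wedge}_{d}$. This is precisely the universal property furnished by Corollary~\ref{product}, which identifies $B[\omega]^{\delta,\wedge}_{d}$ with the product of $B$ by $A$ in $\mathbb{\Delta}(\mathcal X/R)$: concretely, giving a $\delta$-$R$-algebra morphism $B[\omega]^{\delta,\wedge}_{d} \to C$ compatible with the structural maps to $\mathcal X$ is the same as giving a morphism $B \to C$ together with a morphism $A \to C$. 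Viewing $B[\omega]^{\delta,\wedge}_{d}$ as an object of $\mathbb{\Delta}(A)$ through the Taylor map $\theta : A \to B[\omega]^{\delta,\wedge}_{d}$, $x \mapsto x + d\omega$ — that is, fixing the $A$-component of the pair to be the structural map $s$ — this universal property reads $\mathrm{Hom}_{\mathbb{\Delta}(A)}(B[\omega]^{\delta,\wedge}_{d}, C) \simeq \mathrm{Hom}_{\mathbb{\Delta}(\mathcal X/R)}(B, C)$, naturally in $C$, which is exactly $j_{A}^{-1}(B)$ evaluated at $A \overset s \to C$.

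To conclude, I would note that representable presheaves are sheaves for the formally faithfully flat topology — this subcanonicity follows from the left exact descent sequence recorded after Lemma~\ref{faithpro} — so that the presheaf-level identity above upgrades to an identity of objects of the respective topoi. Hence $j_{A}^{-1}(B)$ is representable and equals $B[\omega]^{\delta,\wedge}_{d}$, equipped with its Taylor $\delta$-$A$-algebra structure, as claimed.

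I expect the only real difficulty to be bookkeeping rather than mathematical content, since the substance is already contained in Corollary~\ref{product}. One must check carefully that the inverse image of the localization truly requires no sheafification, so that $j_{A}^{-1}(B) = B[\omega]^{\delta,\wedge}_{d}$ holds on the nose and not merely up to sheafification, and that the convention ``product in the prismatic site'' of Corollary~\ref{product} matches the product of the corresponding representables. The site being opposite to the category of prisms, this product of objects corresponds to the enveloped completed tensor $B \otimes_{R} A$, i.e.\ to $B[\omega]^{\delta,\wedge}_{d}$, and giving a morphism out of it amounts to giving a pair of morphisms out of $B$ and $A$. Once these conventions are aligned, the proposition follows immediately.
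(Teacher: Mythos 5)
Your argument is correct and is exactly the paper's proof, which simply says that the proposition ``formally follows from the description of the product in corollary \ref{product}''; you have spelled out the formal step, namely that $j_A^{-1}$ of a localization is restriction and that the universal property of the product $B \times A = B[\omega]^{\delta,\wedge}_{d}$ (with $A$-structure the Taylor map $\theta$) identifies the restricted representable presheaf $C \mapsto \mathrm{Hom}_{\mathbb{\Delta}(\mathcal X/R)}(B,C)$ with the one represented by $B[\omega]^{\delta,\wedge}_{d}$ in $\mathbb{\Delta}(A)$. The bookkeeping points you flag (no sheafification for a localization, matching of the product convention with the enveloped tensor product) are handled correctly.
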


\begin{proof}
Formally follows from the description of the product in corollary \ref{product}.
\end{proof}

\begin{cor} \label{jexact}
We have $ \mathrm R^ij_{A*}E = 0$ for $i > 0$ when $E$ is locally finite free.
\end{cor}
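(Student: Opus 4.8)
The plan is to compute $\mathrm R^ij_{A*}E$ locally on $(\mathcal X/R)_{\mathbb \Delta}$ and to reduce its vanishing to the corresponding vanishing for the final morphism of a single prism, which is already contained in proposition \ref{requ}. First I would invoke the standard description of higher direct images along a morphism of (prismatic) sites: $\mathrm R^ij_{A*}E$ is the sheaf associated to the presheaf sending a bounded prism $B$ on $\mathcal X$ over $R$ to $\mathrm H^i$ of the restriction of $E$ to the localized site $(A_{\mathbb \Delta})_{/j_A^{-1}(B)}$. Hence it is enough to prove that, for $i>0$, this presheaf is identically zero, since its sheafification will then vanish.

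Next, the essential input is proposition \ref{jminus}: the object $j_A^{-1}(B)$ is representable by the prism $C:=B[\omega]^{\delta,\wedge}_d$ (which, by corollary \ref{product}, is the product $A \times B$ in the prismatic site). Since $j_A$ is the localization morphism at the object $A$ and $C$ represents $j_A^{-1}(B)$, the localized site $(A_{\mathbb \Delta})_{/j_A^{-1}(B)}$ is canonically the relative prismatic site $\mathbb \Delta(C)$, equipped with its inherited formally faithfully flat topology. Under this identification the restriction of $E$ becomes its inverse image along the localization morphism of prismatic sites $\mathbb \Delta(C) \to \mathbb \Delta(A)$.

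By proposition \ref{prismenv} and corollary \ref{extracor}, $C$ is a complete bounded $\delta$-$R$-algebra, and since the inverse image of a (locally finite free) crystal is again a (locally finite free) crystal, the restriction $E|_C$ is a locally finite free prismatic crystal on $C$. Therefore the value of our presheaf at $B$ is $\mathrm H^i(C_{\mathbb \Delta}, E|_C) = \mathrm R^ie_{C*}(E|_C)$, which vanishes for $i>0$ by (the proof of) proposition \ref{requ} --- the argument there applies to any complete bounded $\delta$-$R$-algebra, in particular to $C$. This makes the presheaf identically zero, whence $\mathrm R^ij_{A*}E = 0$ for $i>0$.

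The main obstacle I anticipate is the bookkeeping of the second paragraph: identifying the localized topos $(A_{\mathbb \Delta})_{/j_A^{-1}(B)}$ with $C_{\mathbb \Delta}$ compatibly with the topologies, and checking that $E|_C$ is genuinely a locally finite free crystal on $C$ so that proposition \ref{requ} applies. Once the fiber over $B$ is recognized as an honest relative prismatic site with representable final object $C$, the desired vanishing is immediate from the known cohomological triviality of prisms.
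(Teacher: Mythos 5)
Your proposal is correct and follows essentially the same route as the paper: the sheaf $\mathrm R^ij_{A*}E$ is the sheaf associated to the presheaf $B \mapsto \mathrm H^i(B'_{\mathbb\Delta}, E_{|B'})$ with $B' = B[\omega]^{\delta,\wedge}_{d}$ (representability from proposition \ref{jminus}), and the vanishing then comes from proposition \ref{requ} applied with $B'$ in place of $A$. The extra bookkeeping you flag (identifying the localized site with $\mathbb\Delta(B')$ and checking that the restriction is still a locally finite free crystal) is exactly what the paper leaves implicit.
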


\begin{proof}
The sheaf $R^ij_{A*}E$ is associated to the presheaf $B \mapsto \mathrm H^i(B'_{\mathbb \Delta}, E_{|B'})$ with $B' =B [\omega]^{\delta,\wedge}_{d}$.
But it follows from proposition \ref{requ} applied to the case $A = B'$ that $ \mathrm H^i(B'_{\mathbb \Delta}, E_{|B'}) = \mathrm R^ie_{B'*}E_{|B'} = 0$.
\end{proof}

\begin{cor} \label{dervr}
If $M$ is a finite projective $A$-module, then 
\[
\mathrm R\Gamma((\mathcal X/R)_{\mathbb{\Delta}}, L(M)) = M.
\]
\end{cor}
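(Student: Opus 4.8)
The plan is to exploit the factorization of the final morphism through the localization $j_A$ and to feed in the two vanishing results already established, namely Corollary \ref{jexact} and Proposition \ref{requ}. The whole statement is then a formal consequence of the composition of derived pushforwards.

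First I would record the factorization of morphisms of topoi $e_A = e_{\mathcal X/R} \circ j_A$. At the level of sites the final functor $e_A : \mathbb{\Delta}(A) \to \mathbb 1$ factors through $j_A : \mathbb{\Delta}(A) \to \mathbb{\Delta}(\mathcal X/R)$ followed by $e_{\mathcal X/R}$, simply because the functor to the punctual category $\mathbb 1$ is unique; passing to the induced morphisms of topoi gives the same factorization, and hence the Grothendieck composition $\mathrm Re_{A*} \simeq \mathrm Re_{\mathcal X/R*} \circ \mathrm Rj_{A*}$. Next, since $M$ is finite projective, the sheaf $e_A^* M$ is a locally finite free prismatic crystal on $A$ (this is the content of the equivalence in Proposition \ref{requ}, together with the flatness of $M$ giving $\mathrm Le_A^* M = e_A^* M$). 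Corollary \ref{jexact} applied to $E = e_A^* M$ then yields $\mathrm R^i j_{A*}(e_A^* M) = 0$ for $i > 0$, so that $\mathrm Rj_{A*}(e_A^* M) = j_{A*}(e_A^* M) = L(M)$ is concentrated in degree zero.

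Combining these, I obtain $\mathrm R\Gamma((\mathcal X/R)_{\mathbb{\Delta}}, L(M)) = \mathrm Re_{\mathcal X/R*} L(M) = \mathrm Re_{\mathcal X/R*}\mathrm Rj_{A*}(e_A^* M) = \mathrm Re_{A*}(e_A^* M)$. Finally I would invoke Proposition \ref{requ} once more, now on $A$ itself: as $e_A^* M$ is locally finite free, $\mathrm R^i e_{A*}(e_A^* M) = 0$ for $i > 0$, while $e_{A*}(e_A^* M) = (e_A^* M)_A = A \otimes_A M = M$. Hence the complex collapses to $M$ placed in degree zero, as claimed.

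As for where the real work lies, it is already behind us: the two vanishing inputs — Corollary \ref{jexact}, which rests on applying Proposition \ref{requ} to each product $B[\omega]^{\delta,\wedge}_d$, and the cohomological vanishing $\mathrm R^i e_{A*} = 0$ of Proposition \ref{requ}, itself resting on $\mathrm H^i(A_{\mathbb{\Delta}}, \mathcal O_{\mathbb \Delta(A)}) = 0$ from Bhatt--Scholze — carry all the content. The only mild point of care is the clean identification of the factorization $e_A = e_{\mathcal X/R} \circ j_A$ and the ensuing composition of derived functors, but this is a purely formal consequence of the uniqueness of the morphism to the punctual topos, so I anticipate no genuine obstacle.
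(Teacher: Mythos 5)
Your proof is correct and follows exactly the paper's own argument: both factor $e_A = e_{\mathcal X/R}\circ j_A$, use Corollary \ref{jexact} to replace $j_{A*}$ by $\mathrm Rj_{A*}$ on $e_A^*M$, compose derived pushforwards to reduce to $\mathrm Re_{A*}e_A^*M$, and conclude with Proposition \ref{requ}. No differences worth noting.
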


\begin{proof}
It follows from corollary \ref{jexact} and proposition \ref{requ} that

\begin{align*}
\mathrm R\Gamma((\mathcal X/R)_{\mathbb{\Delta}}, L(M)) &=
\mathrm Re_{\mathcal X/R*}L(M) \\&= \mathrm Re_{\mathcal X/R*}j_{A*}e_{A}^* M \\&= \mathrm Re_{\mathcal X/R} \mathrm Rj_{A*}e_{A}^* M \\&= \mathrm R(e_{\mathcal X/R*} j_{A*}) e_{A}^* M \\&= \mathrm Re_{A*}e_{A}^* M \\&= M. \qedhere
\end{align*}
\end{proof}

In the previous corollaries, we only used the representability property, but we now turn to an explicit description of $L(M)$:

\begin{lem} \label{compcris}
Let $M$ be a finite projective $A$-module.
\begin{enumerate}
\item If $B$ is a bounded prism on $\mathcal X$ over $R$, then
\[
L(M)_{B} = B [\omega]^{\delta,\wedge}_{d} \otimes_{A}' M.
\]
\item If $B \to C$ is a morphism of bounded prisms on $\mathcal X$ over $R$, then
\[
L(M)_{C} = C \widehat \otimes_{B}L(M)_{B} = C \widehat \otimes^\mathrm{L}_{B}L(M)_{B}.
\]
\end{enumerate}
\end{lem}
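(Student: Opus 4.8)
The plan is to reduce both assertions to the representability statement of proposition \ref{jminus} together with the base change behaviour of the prismatic envelope recorded in the remarks following proposition \ref{pri}. The first part is essentially a formal adjunction computation, and the second is base change of the envelope followed by a flatness comparison.

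For the first part, I would compute the realization of $L(M) = j_{A*}e_A^*(M)$ at a bounded prism $B$ on $\mathcal X$ over $R$. Since the flat topology is subcanonical (faithfully flat descent, as in lemma \ref{faithpro}), evaluating a sheaf at $B$ amounts to mapping out of the sheaf represented by $B$, and the adjunction $j_A^{-1} \dashv j_{A*}$ gives
\[
L(M)_B = \mathrm{Hom}_{(\mathcal X/R)_{\mathbb{\Delta}}}(B, j_{A*}e_A^*(M)) = \mathrm{Hom}_{A_{\mathbb{\Delta}}}(j_A^{-1}(B), e_A^*(M)).
\]
By proposition \ref{jminus}, $j_A^{-1}(B)$ is represented by $B[\omega]^{\delta,\wedge}_d$, so this is the realization $(e_A^*(M))_{B[\omega]^{\delta,\wedge}_d}$. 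Finally, because $M$ is finite projective, $e_A^*(M)$ is the genuine sheaf whose realization on a complete bounded $\delta$-$A$-algebra $C$ is $C \otimes_A M$ (no sheafification needed, by propositions \ref{prisheaf} and \ref{requ}); applying this to $C = B[\omega]^{\delta,\wedge}_d$ yields $B[\omega]^{\delta,\wedge}_d \otimes_A M$. The one point requiring care is that the structural $A$-algebra map on the prismatic envelope is the Taylor map $\theta$ and not the naive inclusion, which is precisely what forces the primed tensor product $\otimes'_A$.

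For the second part, I would start from $L(M)_C = C[\omega]^{\delta,\wedge}_d \otimes'_A M$ (part (1) applied to $C$) and use the base change isomorphism $C[\omega]^{\delta}_d \simeq C \otimes_B B[\omega]^{\delta}_d$ from the third remark after proposition \ref{pri}. Completing and tensoring with the finite projective $M$, and using that the Taylor maps $A \to B[\omega]^{\delta,\wedge}_d \to C[\omega]^{\delta,\wedge}_d$ are compatible, this gives
\[
L(M)_C = \left(C \widehat\otimes_B B[\omega]^{\delta,\wedge}_d\right) \otimes'_A M = C \widehat\otimes_B \left(B[\omega]^{\delta,\wedge}_d \otimes'_A M\right) = C \widehat\otimes_B L(M)_B,
\]
the middle step being valid since $M$ is finite projective, so completed base change along $B \to C$ commutes with tensoring by $M$ over $A$ via $\theta$.

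For the remaining equality $C \widehat\otimes_B L(M)_B = C \widehat\otimes^{\mathrm L}_B L(M)_B$, I would show that $L(M)_B$ is completely flat and bounded over $B$: indeed $B[\omega]^{\delta,\wedge}_d$ is completely faithfully flat and bounded over $B$ by proposition \ref{prismenv} (and corollary \ref{extracor}), and $L(M)_B$ is finite projective over it, hence a direct summand of a finite free $B[\omega]^{\delta,\wedge}_d$-module and therefore completely flat and bounded over $B$. For such a module, classical and derived completed base change coincide, by the comparison between derived and classical completion for bounded modules (theorem \ref{compform} in the appendix) together with the vanishing of higher $\mathrm{Tor}$ coming from complete flatness. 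I expect this last comparison — ensuring that the classical completed tensor product computes the derived one — to be the main technical obstacle, the other steps being formal consequences of propositions \ref{jminus}, \ref{prismenv} and the base change of the envelope.
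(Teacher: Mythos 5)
Your proposal is correct and follows essentially the same route as the paper, whose proof is simply the one-line reduction to proposition \ref{jminus} (representability of $j_A^{-1}(B)$ by the prismatic envelope, giving $(j_{A*}e_A^*M)_B = (e_A^*M)_{B[\omega]^{\delta,\wedge}_d}$) and lemma \ref{faithpro} (to handle the finite projective case and the base change). Your expansion — including the base change of the envelope from the remark after proposition \ref{pri} and the classical-versus-derived completed tensor comparison via complete flatness and theorem \ref{compform} — is exactly the detail the paper leaves implicit.
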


\begin{proof}
Follows from proposition \ref{jminus} and lemma \ref{faithpro}.
\end{proof}

\begin{rmk}
The sheaf $L(M)$ is complete and formally faithfully flat in the sense that $L(M)_B$ is always complete and formally faithfully flat.
This is \emph{not} a crystal \emph{stricto sensu}, but can be called a \emph{complete crystal} taking into account the second statement in lemma \ref{compcris}.
\end{rmk}

The following is a standard intermediate result in a linearization process:

\begin{lem} \label{getout}
If $E$ is a locally finite free prismatic crystal on $\mathcal X$ over $R$ and $M$ is a finite projective $A$-module, then
\[
E \otimes_{\mathcal O_{\mathcal X/R}} L(M) \simeq L(E_{A} \otimes_{A} M).
\]
\end{lem}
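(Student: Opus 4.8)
The plan is to prove the statement at the level of the defining functors, reducing it to the projection formula together with the description of crystals on $A_{\mathbb{\Delta}}$ obtained in proposition \ref{requ}. Recall that $L(M) = j_{A*}e_A^*M$ by definition \ref{lindf}, that $j_A : A_{\mathbb{\Delta}} \to (\mathcal X/R)_{\mathbb{\Delta}}$ is a morphism of prismatic sites (so $j_A^{-1}\mathcal O_{\mathcal X/R} = \mathcal O_{\mathbb \Delta(A)}$, and in particular $j_A^*E = j_A^{-1}E$), and that $e_A^*$ is symmetric monoidal, being the pullback of a morphism of ringed topoi.

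First I would invoke the projection formula. Since $E$ is locally finite free, it is locally isomorphic to a finite free $\mathcal O_{\mathcal X/R}$-module, and for such modules the untruncated identity
\[
E \otimes_{\mathcal O_{\mathcal X/R}} j_{A*}F \simeq j_{A*}\left(j_A^{-1}E \otimes_{\mathcal O_{\mathbb \Delta(A)}} F\right)
\]
holds: over a Zariski cover of $\mathcal X$ trivializing $E$, both sides become a finite direct sum of copies of $j_{A*}F$, and the natural comparison map glues because it is functorial in $E$. Applying this with $F = e_A^*M$ gives
\[
E \otimes_{\mathcal O_{\mathcal X/R}} L(M) \simeq j_{A*}\left(j_A^{-1}E \otimes_{\mathcal O_{\mathbb \Delta(A)}} e_A^*M\right).
\]

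Next I would identify $j_A^{-1}E$. The inverse image of a crystal along a morphism of prismatic sites is again a crystal, so $j_A^{-1}E$ is a locally finite free crystal on $A_{\mathbb{\Delta}}$ whose realization on the final object $A$ is $E_A$. By proposition \ref{requ} every such crystal is canonically $e_A^*$ of its global sections, hence $j_A^{-1}E \simeq e_A^*E_A$. Using monoidality of $e_A^*$, one has $e_A^*E_A \otimes_{\mathcal O_{\mathbb \Delta(A)}} e_A^*M \simeq e_A^*(E_A \otimes_A M)$, and therefore
\[
E \otimes_{\mathcal O_{\mathcal X/R}} L(M) \simeq j_{A*}e_A^*(E_A \otimes_A M) = L(E_A \otimes_A M),
\]
which is the desired isomorphism.

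The main point to check carefully is the untruncated projection formula in this specific setting, because $L(M)$ is only a complete crystal and not a crystal \emph{stricto sensu} (see the remark after lemma \ref{compcris}); what saves us is that $E$, being locally finite free, is locally free of finite rank, so no flatness or derived subtlety enters and the formula becomes purely formal after localizing. As a sanity check one may verify the isomorphism on realizations: by lemma \ref{compcris}, evaluated at a bounded prism $B$ on $\mathcal X$ over $R$ and writing $P := B [\omega]^{\delta,\wedge}_{d}$, the two sides become $E_B \otimes_B (P \otimes'_A M)$ and $P \otimes'_A (E_A \otimes_A M)$; the crystal property applied to the two projections $B \to P$ and $\theta : A \to P$ of the product $P$ of $B$ by $A$ (corollary \ref{product}) yields the stratification isomorphism $E_B \otimes_B P \simeq P \otimes'_A E_A$, and rearranging the tensor factors over $P$ matches the two expressions, naturally in $B$.
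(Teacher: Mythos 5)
Your proof is correct and follows essentially the same route as the paper's: the paper constructs the same natural map by adjunction (after identifying $j_A^{-1}E \simeq e_A^*E_A$ exactly as you do) and then verifies it is an isomorphism on realizations, reducing to $M=A$ by additivity --- which is precisely the computation you carry out in your final ``sanity check'' paragraph. Your projection-formula packaging of the first step is only a cosmetic difference; the substance of the verification is identical.
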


\begin{proof}
If $B$ is a bounded prism over $A$, then
\[
(j_{A}^{-1}E)_{B} = B \otimes_{A} (j_{A}^{-1}E)_{A} = B \otimes_{A} E_{A} = (e_{A}^*E_{A})_{B}
\]
because $j_{A}^{-1}E$ is a locally finite free prismatic crystal on $A$.
It follows that $j_{A}^{-1}E =e_{A}^*E_{A}$.
Now, we extend the adjunction map $j_{A}^{-1}j_{A*}e_{A}^*M \to e_{A}^*M$ in order to get

\begin{align*}
j_{A}^{-1}(E \otimes_{\mathcal O_{\mathcal X/R}} j_{A*}e_A^*M) =
&j_{A}^{-1}E \otimes_{\mathcal O_{\mathbb \Delta(A)}} j_{A}^{-1}j_{A*}e_A^*M
\\ &= e_{A}^*E_{A} \otimes_{\mathcal O_{\mathbb \Delta(A)}} j_{A}^{-1}j_{A*}e_A^*M
\\ &\to e_{A}^*E_{A}\otimes_{\mathcal O_{\mathbb \Delta(A)}} e_{A}^*M
\\ &= e_{A}^*(E_{A}\otimes_{A} M).
\end{align*}
By adjunction, we obtain a natural map
\[
E \otimes_{\mathcal O_{\mathcal X/R}} L(M) = E \otimes_{\mathcal O_{\mathcal X/R}} j_{A*}e_A^*M \to j_{A*}(e_{A}^*(E_{A}\otimes_{A} M)) = L(E_{A} \otimes_{A} M).
\]
We want to show that this is an isomorphism.
By additivity, since $M$ is finite projective, we may assume that $M = A$.
It is then sufficient to prove that for any prism $B$ over $\mathcal X/R$,
\[
E_B \otimes_{B} L(A)_B \simeq L(E_{A})_B.
\]
But, since $E$ is a crystal, if we write $B' := B [\omega]^{\delta,\wedge}_{d}$, we have by lemma \ref{compcris}
\[
E_B \otimes_{B} L(A)_B \simeq E_B \otimes_{B} B' \simeq E_{B'} \simeq B' \otimes'_A E_{A} \simeq L(E_{A})_B. \qedhere
\]
\end{proof}

\begin{dfn}
If $B$ is a bounded prism on $\mathcal X$ over $R$, then \emph{comultiplication} is the unique $B$-linear $\delta$-morphism
\[
\Delta_{B} : B [\omega]^{\delta,\wedge}_{d} \to B [\omega]^{\delta,\wedge}_{d} \otimes'_{A}  A[\omega]^{\delta,\wedge}_{d}, \quad \omega \mapsto 1 \otimes' \omega + \omega \otimes' 1.
\]
\end{dfn}

Alternatively, this is the canonical morphism induced by the obvious map
\[
B \to B [\omega]^{\delta,\wedge}_{d} \to B [\omega]^{\delta,\wedge}_{d} \otimes'_{A}  A[\omega]^{\delta,\wedge}_{d}
\]
and the Taylor map
\[
\theta_{B [\omega]^{\delta,\wedge}_{d}} : A \to B [\omega]^{\delta,\wedge}_{d} \otimes'_{A}  A[\omega]^{\delta,\wedge}_{d}.
\]

In the next definition, we should perhaps say hyper-differential operators, but we will again drop the prefix because there is nothing like an non-hyper-differential operator in this theory.

\begin{dfn} \label{deldif}
If $M$ and $N$ are two $A$-modules, then a \emph{prismatic differential operator $D : M \to N$} is an $A$-linear map
\[
\widetilde D :  A[\omega]^{\delta,\wedge}_{d} \otimes'_{A} M \to N.
\]
\end{dfn}

We will write $D(s) := \widetilde D(1 \otimes' s)$, but it is important to notice that $\widetilde D$ is not uniquely determined by the map $D$ unless $N$ is $d$-torsion free.

Composition of prismatic differential operators $D : M \to N$ and $E : N \to P$ is obtained as follows
\[
\xymatrix{
 A[\omega]^{\delta,\wedge}_{d} \otimes'_{A} M \ar[d]^-{\Delta_{A} \otimes' \mathrm{Id}_{M}} \ar[rr]^{\widetilde{E \circ D}}&& P
\\
 A[\omega]^{\delta,\wedge}_{d} \otimes_{A}'  A[\omega]^{\delta,\wedge}_{d} \otimes'_{A} M \ar[rr]^-{\mathrm{Id}_{ A[\omega]^{\delta,\wedge}_{d}} \otimes' \widetilde D}
&&  A[\omega]^{\delta,\wedge}_{d} \otimes'_{A} N \ar[u]^-{\widetilde E}.
}
\]
We will denote by
\[
\mathbb\Delta\mathrm{-Diff}(M,N) := \mathrm{Hom}_A(A[\omega]^{\delta,\wedge}_{d} \otimes'_{A} M, N)
\]
the $A$-module of prismatic differential operators from $M$ to $N$, and simply write ${\mathbb\Delta}\mathrm{-Diff}(M)$ when $M=N$.
Multiplication turns ${\mathbb\Delta}\mathrm{-Diff}(M)$ into an $R$-algebra.
If $M$ is an $A$-module, we may then consider the adjunction map (see the proof of proposition 6.16 in \cite{GrosLeStumQuiros20})
\[
M \otimes_A A[\omega]^{\delta,\wedge}_{d} \to \mathrm{Hom}_A(\mathrm{Hom}_A(A[\omega]^{\delta,\wedge}_{d}, A), M) = \mathrm{Hom}_A({\mathbb\Delta}\mathrm{-Diff}(A), M).
\]
If we are given a prismatic stratification $\epsilon$ on $M$, then we can consider the composite map
\[
A[\omega]^{\delta,\wedge}_{d} \otimes'_{A} M \overset \epsilon \simeq  M \otimes_A A[\omega]^{\delta,\wedge}_{d} \to \mathrm{Hom}_A(\mathbb\Delta\mathrm{-Diff}(A), M).
\]
By adjunction again, it provides a morphism of rings
\[
{\mathbb\Delta}\mathrm{-Diff}(A) \to  \mathrm{Hom}_A(A[\omega]^{\delta,\wedge}_{d} \otimes'_{A} M, M) = {\mathbb\Delta}\mathrm{-Diff}(M).
\]
In other words, a prismatic stratification provides an action on $M$ by prismatic differential operators of the ring of prismatic differential operators on $A$.
This is a fully faithful functor.

Now, we define the linearization $L(D)_{B}$ of a prismatic differential operator $D : M \to N$ beween finite projective $A$-modules as follows
\[
\xymatrix{L(M)_{B} \ar[rr]^{L(D)_{B}} \ar@{=}[d]&& L(N)_{B}\ar@{=}[d]\\
B [\omega]^{\delta,\wedge}_{d} \otimes'_{A} M \ar[rd]^{\Delta_{B} \otimes' \mathrm{Id}_{M}} && B [\omega]^{\delta,\wedge}_{d} \otimes'_{A} N \\ & B [\omega]^{\delta,\wedge}_{d} \otimes_{A}'  A[\omega]^{\delta,\wedge}_{d} \otimes'_{A} M \ar[ru]^{\mathrm{Id}_{B} \otimes \widetilde D}
}
\]

\begin{prop} \label{linfunc}
Linearization provides a functor from the category of finite projective $A$-modules and prismatic differential operators to the category of prismatic sheaves over $A$.
\end{prop}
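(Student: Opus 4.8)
The functor $L$ is already prescribed on objects by Definition \ref{lindf} (and $L(M) = j_{A*}e_A^*(M)$ is visibly a sheaf, being a pushforward) and on morphisms by the linearization diagram above; to see that it is a functor I must check three things: that each $L(D)$ is a genuine morphism of prismatic sheaves, that $L$ carries identities to identities, and that it respects composition. Everything rests on the coalgebra-type structure carried by the comultiplications $\Delta_B$, so I would begin by recording their two key properties. Abbreviating $P_B := B[\omega]^{\delta,\wedge}_d$ and writing $\varepsilon_B : P_B \to B$ for the augmentation $\omega \mapsto 0$, the counit relation
\[
(\mathrm{Id}_{P_B} \otimes'_A \varepsilon_A) \circ \Delta_B = \mathrm{Id}_{P_B}
\]
is immediate from $\Delta_B(\omega) = 1 \otimes' \omega + \omega \otimes' 1$, while coassociativity
\[
(\Delta_B \otimes'_A \mathrm{Id}_{P_A}) \circ \Delta_B = (\mathrm{Id}_{P_B} \otimes'_A \Delta_A) \circ \Delta_B
\]
holds because both sides are $B$-linear $\delta$-morphisms $P_B \to P_B \otimes'_A P_A \otimes'_A P_A$ sending $\omega$ to $\omega \otimes 1 \otimes 1 + 1 \otimes \omega \otimes 1 + 1 \otimes 1 \otimes \omega$; they therefore coincide by the universal property of the ring of prismatic polynomials (the remark following Definition \ref{prispol}), once the triple tensor product is identified with such a ring as in the last remark after proposition \ref{pri}.

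That each $L(D)$ is a morphism of sheaves amounts to its compatibility with the transition maps of lemma \ref{compcris}. Since $\Delta_C$ is obtained from $\Delta_B$ by applying $C \widehat\otimes_B -$ along $B \to C$ (naturality of comultiplication in the prism) and the map $\widetilde D$ only involves the factors $A[\omega]^{\delta,\wedge}_d$ and $M$, the composite defining $L(D)_C$ is the base change of the composite defining $L(D)_B$, which is exactly the required compatibility. For identities, the identity endomorphism of $M$ in the category of prismatic differential operators is $\widetilde{\mathrm{Id}_M} = \varepsilon_A \otimes'_A \mathrm{Id}_M : P_A \otimes'_A M \to M$ (note $\varepsilon_A \circ \theta = \mathrm{Id}_A$), and then
\[
L(\mathrm{Id}_M)_B = (\mathrm{Id}_{P_B} \otimes \widetilde{\mathrm{Id}_M}) \circ (\Delta_B \otimes' \mathrm{Id}_M) = \big[(\mathrm{Id}_{P_B} \otimes'_A \varepsilon_A) \circ \Delta_B\big] \otimes'_A \mathrm{Id}_M = \mathrm{Id}_{L(M)_B}
\]
by the counit relation.

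For composition, given $D : M \to N$ and $E : N \to P$, I would expand
\[
L(E)_B \circ L(D)_B = (\mathrm{Id}_{P_B} \otimes \widetilde E) \circ (\Delta_B \otimes' \mathrm{Id}_N) \circ (\mathrm{Id}_{P_B} \otimes \widetilde D) \circ (\Delta_B \otimes' \mathrm{Id}_M)
\]
and push the two inner copies of $\Delta_B$ together. The single interchange needed,
\[
(\Delta_B \otimes' \mathrm{Id}_N) \circ (\mathrm{Id}_{P_B} \otimes \widetilde D) = (\mathrm{Id}_{P_B \otimes'_A P_A} \otimes \widetilde D) \circ (\Delta_B \otimes' \mathrm{Id}_{P_A \otimes'_A M}),
\]
holds by bifunctoriality of the completed tensor product, because $\Delta_B$ acts on the $P_B$-factor and $\widetilde D$ on the disjoint $P_A \otimes'_A M$-factor. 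The two copies of $\Delta_B$ then combine into $(\Delta_B \otimes' \mathrm{Id}_{P_A}) \circ \Delta_B$, which by coassociativity equals $(\mathrm{Id}_{P_B} \otimes' \Delta_A) \circ \Delta_B$; comparing the outcome with the expansion of $L(E \circ D)_B = (\mathrm{Id}_{P_B} \otimes \widetilde{E \circ D}) \circ (\Delta_B \otimes' \mathrm{Id}_M)$ read off from the composition diagram (which inserts precisely an $\mathrm{Id}_{P_B} \otimes' \Delta_A \otimes' \mathrm{Id}_M$) shows that the two agree.

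I expect the main obstacle to lie not in this formal diagram chase but in making the coalgebra formalism rigorous in the completed $\delta$-ring setting: one has to check that all structural maps are taken along the Taylor map $\theta$ (the meaning of $\otimes'$), that the iterated completed tensor products really are rings of prismatic polynomials so that their universal property applies, and that completion is harmless since every map involved is continuous. Once coassociativity and the counit relation are in hand, functoriality follows formally.
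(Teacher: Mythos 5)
Your argument is correct and is precisely the verification the paper compresses into one line (``one easily checks that the construction is compatible with completed base extension and composition''): you make the counit and coassociativity of the comultiplication explicit, deduce preservation of identities and composition from them, and use lemma \ref{compcris} together with naturality of $\Delta$ under $C \widehat\otimes_B -$ for compatibility with the transition maps, which is the same route as the paper's. One cosmetic caution: the augmentation $\varepsilon_B : P_B \to B$, $\omega \mapsto 0$, need not be a $\delta$-morphism for a general prism $B$ on $\mathcal X$ (the chosen lift $x \in B$ need not satisfy $\delta(x) = 0$), but since you only ever use $\varepsilon_A$, for which $\delta(x) = 0$ holds by construction, nothing in your proof is affected.
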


\begin{proof}
The map $L(D)_{B}$ is clearly $B$-linear and one easily checks that the construction is compatible with completed base extension and composition.
Thanks to lemma \ref{compcris}, we are done.
\end{proof}

\begin{rmks}
\begin{enumerate}
\item 
Note that if $D$ is a prismatic differential operator, then
\[
\mathrm R\Gamma((\mathcal X/R)_{\mathbb{\Delta}}, L(D)) =D.
\]
\item
Again, the results of this section are difficult to use in practice unless we are in a specific situation such as in section \ref{sec6}.
\end{enumerate}
\end{rmks}

\section{Prismatic crystals and twisted calculus} \label{sec5}

We recall some constructions from our previous articles \cite{GrosLeStumQuiros20} and \cite{GrosLeStumQuiros21}, and give a local proof that Cartier transform defines an equivalence between prismatic crystals and $q$-crystals.

We endow the local ring $\mathbb Z[q]_{(p,q-1)}$ with the unique $\delta$-structure such that $\delta(q) = 0$ and we denote by $(n)_{q}$ the $q$-analog of an integer $n$.

We let $R$ be a bounded $\delta$-$\mathbb Z[q]_{(p,q-1)}$-algebra (with respect to $p$ modulo $(p)_{q}$).
Then (the image of) $(p)_q$ is a distinguished element in $R$ and $(R, (p)_q)$ will be our base prism (so that $d = (p)_q$ now).
An $R$-module $M$ is systematically endowed with the $(p,q-1)$-adic (or equivalently $(p,(p)_{q})$-adic) topology.

We let $A$ be a complete $R$-algebra with a topologically \'etale coordinate $x$ and we endow it with the unique structure of a $\delta$-$R$-algebra on $A$ such that $\delta(x)=0$.

\subsection*{Prismatic case}

Let us denote by $\sigma$ the unique endomorphism of the $R$-algebra $A$ such that $\sigma(x) = qx$ and $\sigma \equiv \mathrm{Id}_A \mod q-1$.
Recall from proposition 2.4 of \cite{LeStumQuiros18} (applied to $\sigma^p$) that there exists a $q^p$-analog $\mathrm d_{q^p} : A \to \Omega_{A/R,q^p}$ of the universal differential map.
We may then recall from definition 3.1 of \cite{GrosLeStumQuiros21} that a \emph{twisted connection of level $-1$} on an $A$-module $M$ is an $R$-linear map $\nabla : M \to M \otimes \Omega_{A/R,q^p}$ such that
\[
\forall f \in A, \forall s \in M, \quad \nabla(fs) = (p)_{q}\;s \otimes \mathrm d_{q^p}f + \sigma^{p}(f)\nabla(s).
\]
Actually, $\Omega_{A/R,q^p}$ is free on one generator $\mathrm d_{q^p}x$ and we may always write\footnote{We wrote $\partial_q^{\langle 1 \rangle}$ instead of $\partial_{q(-1)}$ in \cite{GrosLeStumQuiros21}, but we'd rather specify the level here.} $\nabla(s) =: \partial_{q(-1)}(s)\mathrm d_{q^p}x$.
The connection is said to be \emph{topologically quasi-nilpotent} if
\[
\forall s \in M, \quad \partial_{q(-1)}^k(s) \to 0.
\]

\begin{thm} \label{equiv1}
The category of locally finite free prismatic crystals $E$ on $\mathcal X := \mathrm{Spf}(A/(p)_{q})$ over $R$ is equivalent to the category of finite projective $A$-modules $M$ endowed with a topologically quasi-nilpotent twisted connection of level $-1$.
\end{thm}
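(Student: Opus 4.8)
The plan is to reduce the statement to a purely algebraic dictionary by means of proposition \ref{equi}, which already identifies locally finite free prismatic crystals on $\mathcal X$ with finite projective $A$-modules $M$ equipped with a prismatic stratification (definition \ref{prstr}), i.e.\ an isomorphism $A[\omega]^{\delta,\wedge}_{d} \otimes'_{A} M \simeq M \otimes_{A} A[\omega]^{\delta,\wedge}_{d}$ (with $d = (p)_q$ now) satisfying the cocycle condition. It therefore suffices to prove that the category of finite projective $A$-modules with a prismatic stratification is equivalent to the category of finite projective $A$-modules endowed with a topologically quasi-nilpotent twisted connection of level $-1$. The entire content of the theorem is thus the translation of the descent datum $\epsilon$ into the differential datum $\nabla$, and back.

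First I would establish the explicit description of the ring of prismatic polynomials $A[\omega]^{\delta,\wedge}_{d}$ --- equivalently, by the remarks following proposition \ref{pri}, of the prismatic envelope of the diagonal in $A \otimes_R A$ --- as a \emph{twisted divided polynomial algebra of level $-1$}. Concretely, one exhibits a topological basis $\{\omega^{[n]}\}_{n \in \mathbb N}$ of $A[\omega]^{\delta,\wedge}_{d}$ over $A$ (for the naive structure), with $\omega^{[0]} = 1$ and $\omega^{[1]} = \omega$, whose multiplication and comultiplication laws mimic those of ordinary divided powers but are governed by the $q^p$-analogs $(n)_{q^p}$ rather than by the integers $n$. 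The Taylor map $\theta : x \mapsto x + d\omega$ and the comultiplication $\Delta(\omega) = 1 \otimes' \omega + \omega \otimes' 1$ then endow this basis with exactly the combinatorics of the $q$-calculus developed in \cite{GrosLeStumQuiros20} and \cite{GrosLeStumQuiros21}. This is the step where the identification $d = (p)_q$ and the twisting endomorphism $\sigma^p$ genuinely enter.

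Given this basis, the dictionary is the twisted Taylor expansion. To a stratification $\epsilon$ I would attach the family of $R$-linear operators $\partial^{[n]} : M \to M$ defined by $\epsilon(1 \otimes' s) = \sum_{n} \partial^{[n]}(s) \otimes \omega^{[n]}$; the normalization $\partial^{[0]} = \mathrm{Id}_M$ follows from the counit (reduction modulo $\omega$), and the linear coefficient $\partial^{[1]} =: \partial_{q(-1)}$ yields, through $\nabla(s) := \partial_{q(-1)}(s)\,\mathrm d_{q^p}x$, a twisted connection of level $-1$: the Leibniz rule $\nabla(fs) = (p)_q\, s \otimes \mathrm d_{q^p}f + \sigma^p(f)\nabla(s)$ is precisely what the $\theta$-semilinearity of $\epsilon$ imposes, once one knows how $\theta$ expands an element $f \in A$ along the basis $\{\omega^{[n]}\}$. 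Reading the cocycle condition against $\Delta$ shows that each $\partial^{[n]}$ is the $n$-th twisted divided power of $\partial_{q(-1)}$, so the whole stratification is reconstructed from $\nabla$ alone; conversely, starting from $\nabla$ one defines $\epsilon$ by the same series. Convergence of this series in the completed tensor product $M \otimes_{A} A[\omega]^{\delta,\wedge}_{d}$ is equivalent to $\partial_{q(-1)}^k(s) \to 0$, i.e.\ to topological quasi-nilpotence, which is exactly what makes the two constructions mutually inverse.

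I expect the main obstacle to be the second step: pinning down the twisted divided power structure of the prismatic envelope of the diagonal and checking that the comultiplication $\Delta$ is compatible with it. Once the basis $\{\omega^{[n]}\}$ and its $(n)_{q^p}$-divided-power laws are in hand, the passage between $\epsilon$ and $\nabla$, and the matching of the cocycle condition with the higher-operator relations, are a routine --- if notationally heavy --- unwinding of the $q$-calculus. The delicate points are that the relevant denominators are the $q^p$-integers rather than the ordinary ones, that the structural map on the source is the Taylor map $\theta$ built from $\sigma^p$ and $\mathrm d_{q^p}$, and that all identities must survive the classical completion defining $A[\omega]^{\delta,\wedge}_{d}$, so that the formal computations of \cite{GrosLeStumQuiros21} carry over to the completed setting.
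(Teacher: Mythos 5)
Your proposal follows essentially the same route as the paper: reduce via proposition \ref{equi} to a dictionary between prismatic stratifications and connections, identify $A[\omega]^{\delta,\wedge}_{d}$ with the completed twisted divided polynomial algebra of level $-1$, and then translate stratifications into topologically quasi-nilpotent twisted connections by the twisted Taylor expansion. The paper simply outsources your two key steps to propositions 5.7 and 3.10 of \cite{GrosLeStumQuiros21}, which establish exactly the facts you propose to prove.
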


\begin{proof}
We introduced in \cite{GrosLeStumQuiros21}, definition 1.6, the ring $A\langle \omega \rangle_{q(-1)}$ of twisted divided polynomials of level $-1$ and showed in proposition 5.7 of loc.\ cit.\ that its completion is the prismatic envelope of the diagonal.
In other words, we have $ A[\omega]^{\delta,\wedge}_{d}= \widehat{A\langle \omega \rangle}_{q(-1)}$ with the notations of the previous sections.
In this situation, a prismatic stratification is what we called a \emph{twisted hyper-stratification of level $-1$}.
Proposition \ref{equi} then tells us that the category of locally finite free prismatic crystals on $\mathcal X$ is equivalent to the category of projective $A$-modules $M$ endowed with a hyper-stratification of level $-1$.
The conclusion then follows from proposition 3.10 of loc.\ cit.,\ which lets us interpret a twisted hyper-stratification as a twisted connection when the $A$-module is flat and finitely presented (i.e.\ finite projective).
\end{proof}

\begin{rmks}
\begin{enumerate}
\item
As a consequence of this theorem (which also holds in higher dimension) we get a local interpretation of prismatic crystals with respect to $(p)_{q}$ on any smooth formal scheme.
\item
Conversely, the theorem also provides a method to glue $q$-difference equations of negative level, which is a real challenge due to the non-commutative nature of $q$-geometry (see also \cite{Chatzistamatiou20}, corollary 2.3.3 for a $q$-crystalline version).
\end{enumerate}
\end{rmks}

\subsection*{$q$-crystalline case}

We now want to investigate the $q$-crystalline side.
Thus, we assume from now on that $R$ is endowed with a $q$-PD-ideal $\mathfrak r$.

There exists a $q$-crystalline variant of theorem \ref{lift} which is also due to Bhatt and Scholze, for which we give a short alternative proof:

\begin{prop}[Bhatt-Scholze] \label{lift2}
If $B$ is a completely flat $\delta$-$R$-algebra and $g \in B$ is completely (faithfully) regular over $R$, then the complex $B[\phi(g)/(p)_{q}]^{\delta}$ is completely (faithfully) flat over $R$.
\end{prop}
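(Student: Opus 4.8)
The plan is to reduce the statement directly to Theorem \ref{lift}, applied to the Frobenius twist $\phi(g)$ rather than to $g$ itself, with the distinguished element taken to be $d = (p)_q$. Indeed, in the notation of the earlier sections $B[\phi(g)/(p)_q]^\delta$ is by definition nothing but $B[\phi(g)/d]^\delta$ for this choice of $d$, and the hypotheses of Theorem \ref{lift} hold for the prism $(R,(p)_q)$ fixed in this section. So as soon as we know that $\phi(g)$ is completely (faithfully) regular over $R$, Theorem \ref{lift} yields the complete (faithful) flatness of the complex over $R$ at once. The entire argument therefore rests on checking the regularity hypothesis for $\phi(g)$.

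This is exactly where the assumption that $B$ is completely flat over $R$ enters. Under that assumption, the remarks following Definition \ref{comfr} tell us that complete (faithful) regularity depends only on the class modulo $(p,d)$ and is preserved under passing to powers; since $\phi(g) \equiv g^p \pmod p$, the element $\phi(g)$ shares its class modulo $(p,d)$ with $g^p$, which is completely (faithfully) regular because $g$ is. Hence $\phi(g)$ is completely (faithfully) regular over $R$, and the reduction to Theorem \ref{lift} is complete. This is precisely the $(p)_q$-analogue of the variant of Lemma \ref{pcase} recorded in the remark that follows it, obtained here from the more general distinguished element $d=(p)_q$.

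I do not expect a genuine obstacle along this route, which is what makes the proof ``short''; the only point requiring care is that complete flatness of $B$ cannot be dropped, since without it one can neither reduce complete regularity to the mod-$(p,d)$ criterion nor propagate regularity through powers and through $\phi$. The alternative would be to transcribe the proof of Lemma \ref{pcase}, replacing the divided power envelope and the identification $B_i[\phi(g_i)/p]^\delta \simeq B_i^{\mathrm{PD}}$ by their $q$-divided power counterparts over a simplicial resolution; there the real difficulty would be setting up the $q$-PD-envelope description, and it is exactly this $q$-crystalline machinery that the reduction to Theorem \ref{lift} allows us to sidestep.
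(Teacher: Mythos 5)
Your proof is correct, but it takes a genuinely different route from the one in the paper. You treat Theorem \ref{lift} as a black box: since $(p)_q$ is distinguished in $R$ and, by the remarks following Definition \ref{comfr}, the complete flatness of $B$ lets you transfer complete (faithful) regularity from $g$ to $g^p$ and hence to $\phi(g) \equiv g^p \bmod p$, the theorem applied to the element $\phi(g)$ with $d = (p)_q$ gives the conclusion at once. The paper instead performs a derived base change along $R \to R/(q-1)$, under which $(p)_q$ becomes $p$, and lands directly on Lemma \ref{pcase}; this requires knowing that $R/(q-1)\widehat{\otimes}^{\mathrm L}_R B$ is discrete, which is where Proposition \ref{monoth} from the appendix and the boundedness of $R$ enter (the upgrade from $g$ to $\phi(g)$ being completely regular is also the first step there, so that part is shared). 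What each approach buys: yours is shorter and avoids both the non-flat base change and the appendix input, at the cost of re-invoking the full strength of Theorem \ref{lift}, whose proof already contains the reduction from a general distinguished $d$ to $d=p$ via divided-power envelopes; the paper's argument makes the specialization to the crystalline ($q=1$) situation explicit, which is conceptually closer to Bhatt--Scholze's original lemma 16.10 and only needs the elementary Lemma \ref{pcase} rather than the whole of Theorem \ref{lift}. Logically both are complete, and your observation that the hypothesis of complete flatness of $B$ is used exactly once --- to propagate regularity through powers and through $\phi$ --- is accurate.
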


\begin{proof}
Note first that $\phi(g)$ is automatically completely (faithfully) regular because $g$ is completely (faithfully) regular and $B$ is completely flat.
Also, it follows from proposition \ref{monoth} in the appendix that $R/(q-1)\widehat \otimes^{\mathrm L}_{R} B$ is discrete because $B$ is completely flat over $R$ (and $R$ has bounded $p^\infty$-torsion).
After complete derived pullback along $R \to R/(q-1)$, we fall back onto lemma \ref{pcase}.
\end{proof}

\begin{rmk}
This proposition is essentially a particular case of the first part of lemma 16.10 in \cite{BhattScholze22}.
In the second part, they also show, with some extra hypothesis, that $B[\phi(g)/(p)_{q}]^{\delta,\wedge}$ is the complete $q$-PD-envelope of $g$ in $B$.
We shall come back to this matter in proposition \ref{lift3}.
\end{rmk}

We assume from now on that $A$ is endowed with a closed $q$-PD-ideal $\mathfrak a$ such that $\mathfrak rA \subset \mathfrak a$.

\begin{prop}
If $\mathcal X := \mathrm{Spf}(A/\mathfrak a)$, then $A$ is a covering of (the final object of the topos associated to) $q\mathrm{-CRIS}(\mathcal X/R)$.
\end{prop}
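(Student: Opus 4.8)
The plan is to transcribe the proof of Proposition \ref{covering}, using the $q$-PD-envelope in place of the prismatic envelope and Proposition \ref{lift2} in place of Corollary \ref{extracor}. Unwinding the statement, it suffices to show that for every object $(B,J)$ of $q\mathrm{-CRIS}(\mathcal X/R)$ there exist a formally faithfully flat covering $(B,J) \to (B',J')$ and a morphism $(A,\mathfrak a) \to (B',J')$ in the $q$-crystalline site. The natural candidate for $(B',J')$ is once more the product of $(B,J)$ by $(A,\mathfrak a)$, so the real content is to exhibit this product explicitly and to check that the structural map $B \to B'$ is formally faithfully flat.

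First I would lift the coordinate: using the structural map $A/\mathfrak a \to B/J$, choose $x_B \in B$ reducing to the image of $x$ and set $\xi := 1 \otimes x - x_B \otimes 1 \in B \widehat\otimes_R A$. Since $x$ is a topologically \'etale coordinate, Lemma \ref{dletext} extends the Taylor map $x \mapsto x_B + \xi$ to a $\delta$-morphism $A \to B'$, and, exactly as in Proposition \ref{pri}, the product of $(B,J)$ by $(A,\mathfrak a)$ is the completed $q$-PD-envelope of the kernel of $B \widehat\otimes_R A \to B/J$. Adjoining $\phi(\xi)/(p)_q$ makes $\xi$ a $q$-PD-element, so this envelope is $B' := B[\phi(\xi)/(p)_q]^{\delta,\wedge}$, endowed with its $q$-PD-ideal $J'$; the Taylor map then supplies the morphism $(A,\mathfrak a) \to (B',J')$, compatibly with the structural maps to $\mathcal X$.

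Everything then rests on the flatness of $B'$ over $B$, which is the $q$-crystalline counterpart of theorem \ref{lift} and is exactly what Proposition \ref{lift2} provides. I would apply that proposition with $B$ itself as base ring --- legitimate, since a prism over $R$ is again a bounded $\delta$-$\mathbb Z[q]_{(p,q-1)}$-algebra with distinguished element $(p)_q$ --- and with $g = \xi$, which is completely faithfully regular over $B$ as the image of a variable (cf.\ the remarks after Definition \ref{comfr}). Proposition \ref{lift2} then gives that $B[\phi(\xi)/(p)_q]^{\delta}$ is completely faithfully flat over $B$, hence that its completion $B'$ is formally faithfully flat over $B$, and choosing $B'$ as above yields the covering. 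The one genuine obstacle is this flatness assertion; it has been deliberately isolated as Proposition \ref{lift2}, so that once it is granted the remainder is a routine copy of the prismatic argument.
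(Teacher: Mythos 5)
Your construction is essentially the paper's: lift the image of $x$ to $B$, adjoin $\xi$ with $\delta(x+\xi)=0$, set $B' := B[\xi][\phi(\xi)/(p)_q]^{\delta,\wedge}$ with $J'$ the closure of $JB'$, map $x \mapsto x+\xi$, and invoke proposition \ref{lift2} for the formal faithful flatness of $B \to B'$. That is exactly how the paper argues, and the core of your proof is correct.

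The one point where you overreach is the claim that $(B',J')$ is the product of $(B,J)$ by $(A,\mathfrak a)$, i.e.\ that $B[\phi(\xi)/(p)_q]^{\delta,\wedge}$ is the complete $q$-PD-envelope of the relevant ideal ("adjoining $\phi(\xi)/(p)_q$ makes $\xi$ a $q$-PD-element, so this envelope is $B'$"). That identification is precisely proposition \ref{lift3} (Bhatt--Scholze, lemma 16.10), which is only established later in the paper and under the additional hypothesis $(\star)$ that $B/(q-1)B$ is $p$-torsion free; it is not available at this stage, and asserting it here without proof is a gap. Fortunately it is also unnecessary: to show that $A$ covers the final object you only need \emph{some} formally faithfully flat $(B,J) \to (B',J')$ receiving a morphism from $(A,\mathfrak a)$ compatibly with the structural maps, and your $B'$ does the job whether or not it satisfies the universal property of the product. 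The paper makes exactly this remark after its proof. So you should either drop the product/envelope claim entirely, or be explicit that you are deferring it (and the hypothesis it requires) to proposition \ref{lift3}; as written, your argument routes a correct proof through an unproved intermediate statement.
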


\begin{proof}
This is analogous to the proof of proposition \ref{covering}, but we can briefly recall how it works.
We give ourselves a complete bounded $q$-PD-pair $(B,J)$ over $\mathcal X/R$ and we lift the image of $x$ under $A/\mathfrak a \to B/J$ to some (still written) $x \in B$.
We endow the polynomial ring $B[\xi]$ with $\delta(x+\xi) = 0$.
We let $B' := B[\xi][\phi(\xi)/(p)_{q}]^{\delta, \wedge}$ and denote by $J'$ the closure of $JB'$ in $B'$.
We send $x \in A$ to $x + \xi \in B'$.
This is a morphism of $q$-PD-pairs.
The point now is that the map $B \to B'$ is formally faithfully flat thanks to proposition \ref{lift2}.
\end{proof}

Note that we don't need to know in this proof that $(B',J')$ is indeed the product of $(B, J)$ by $(A, \mathfrak a)$ in the $q$-crystalline site (a fact that will follow from proposition \ref{lift3} below).

We may now consider, as we did above in level $-1$, the $q$-analog $\mathrm d_{q} : A \to \Omega_{A/R,q}$ of the universal differential map and recall from definition 2.8 of \cite{LeStumQuiros18} that a \emph{twisted connection (of level $0$)} on an $A$-module $M$ is an $R$-linear map $\nabla : M \to M \otimes \Omega_{A/R,q}$ such that
\[
\forall f \in A, \forall s \in M, \quad \nabla(fs) =s \otimes \mathrm d_{q}f + \sigma(f)\nabla(s).
\]
Again, $\Omega_{A/R,q}$ is free on $\mathrm d_{q}x$.
We can write $\nabla(s) =: \partial_{q}(s)\mathrm d_{q}x$ and call the connection \emph{topologically quasi-nilpotent} when
\[
\forall s \in M, \quad \partial_{q}^k(s) \to 0.
\]

\begin{cor} \label{equiv2}
The category of locally finite free $q$-crystals $E$ on $\mathcal X := \mathrm{Spf}(A/\mathfrak a)$ over $R$ is equivalent to the category of finite projective $A$-modules $M$ endowed with a topologically quasi-nilpotent twisted connection.
\end{cor}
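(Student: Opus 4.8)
The plan is to follow the proof of Theorem \ref{equiv1} almost verbatim, replacing the prismatic envelope of the diagonal by its $q$-crystalline counterpart and level $-1$ by level $0$. First I would exploit the covering proposition just established (the $q$-crystalline analogue of Proposition \ref{covering}): since $A$ covers the final object of $q\mathrm{-CRIS}(\mathcal X/R)$, descent along this covering identifies the category of locally finite free $q$-crystals on $\mathcal X$ over $R$ with that of finite projective $A$-modules equipped with a descent datum relative to the self-product of $(A,\mathfrak a)$ in the $q$-crystalline site, that is, a \emph{$q$-crystalline stratification}. The input making descent formal is that $A \to A[\xi][\phi(\xi)/(p)_q]^{\delta,\wedge}$ is complete and formally faithfully flat by Proposition \ref{lift2}, so that Lemma \ref{faithpro} applies exactly as in the deduction of Proposition \ref{equi} from Proposition \ref{covering}. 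The identification of this envelope with the product $(A,\mathfrak a)\times(A,\mathfrak a)$ is the $q$-crystalline analogue of Corollary \ref{product} and is supplied by Proposition \ref{lift3} below.

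Second, I would make the envelope explicit. The aim is to identify the $q$-PD-envelope of the diagonal $A[\xi][\phi(\xi)/(p)_q]^{\delta,\wedge}$ with the completion $\widehat{A\langle \omega \rangle}_{q}$ of the ring of twisted divided polynomials of level $0$ from \cite{GrosLeStumQuiros20}, which is the level $0$ analogue of the identification $A[\omega]^{\delta,\wedge}_{d} = \widehat{A\langle \omega \rangle}_{q(-1)}$ used in Theorem \ref{equiv1}. Under this identification a $q$-crystalline stratification becomes precisely a \emph{twisted hyper-stratification of level $0$}, so the equivalence of the first step is rephrased in terms of twisted calculus.

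Third, I would translate hyper-stratifications into connections. Invoking the level $0$ version of the interpretation of a twisted hyper-stratification as a twisted connection (the analogue for level $0$ of proposition 3.10 of \cite{GrosLeStumQuiros21}, worked out in \cite{GrosLeStumQuiros20}, and matching the ``classical'' picture of \cite{Shiho15}), a twisted hyper-stratification of level $0$ on a finite projective, hence flat and finitely presented, $A$-module $M$ corresponds to a topologically quasi-nilpotent twisted connection $\nabla : M \to M \otimes \Omega_{A/R,q}$ of the form fixed just before the statement, and conversely. Composing the three equivalences yields the corollary.

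The main obstacle is the middle step: one must be certain that the explicit description of the $q$-PD-envelope of the diagonal as a completed twisted divided polynomial algebra of level $0$, together with the resulting dictionary between hyper-stratifications and quasi-nilpotent twisted connections, holds at level $0$ exactly as it does at level $-1$. Once Proposition \ref{lift2} provides the complete faithful flatness and Proposition \ref{lift3} provides the product description, the descent step and the final comparison with connections are formal and mirror the prismatic case of Theorem \ref{equiv1}.
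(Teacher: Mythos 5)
Your proposal is correct and follows essentially the same route as the paper: the published proof simply says to repeat the argument of Theorem \ref{equiv1} with the level $0$ ring of twisted divided polynomials $A\langle \xi \rangle_{q}$ and theorem 7.3 of \cite{GrosLeStumQuiros20} in place of proposition 5.7 of \cite{GrosLeStumQuiros21}, which is precisely your three-step scheme (covering and descent, explicit description of the envelope, translation of hyper-stratifications into quasi-nilpotent twisted connections). Your remark that the product description in the $q$-crystalline site is only justified later by Proposition \ref{lift3} matches the paper's own acknowledgement of that forward reference.
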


\begin{proof}
This is similar to the proof of proposition \ref{equiv1} using the ring $A\langle \xi \rangle_{q}$ of twisted divided polynomials of level $0$ and theorem 7.3 of \cite{GrosLeStumQuiros20} instead of proposition 5.7 of \cite{GrosLeStumQuiros21}.
\end{proof}

\begin{rmk}
Using the same technics, Andre Chatzistamatiou shows in \cite{Chatzistamatiou20} (theorem 1.3.3 and proposition 2.1.4) that corollary \ref{equiv2} holds in a more general setting.
\end{rmk}

\subsection*{Comparison}

Recall now from definition 4.3 of \cite{GrosLeStumQuiros21} that, if we denote by $\mathrm{MIC}_{q}$ (resp. $\mathrm{MIC}_{q}^{(-1)})$ the category of modules endowed with a twisted connection (resp. a twisted connection of level $-1$), then there exists a \emph{level raising functor}
\[
F^* : \mathrm{MIC}_{q}^{(-1)}(A'/R) \to \mathrm{MIC}_{q}(A/R)
\]
where $A' := R {}_{{}_{\phi}\nwarrow}\!\!\widehat\otimes_{R} A$.

\begin{thm} \label{bigdiag}
Let $(R, \mathfrak r) \to (A,\mathfrak a)$ be a morphism of complete bounded $q$-PD-pairs.
Assume that there exists a topologicaly étale coordinate $x$ on $A/R$.
If $\mathcal X := \mathrm{Spf}(A/\mathfrak a)$ and $\mathcal X' := \mathrm{Spf}(A'/(p)_{q})$ with $A' := R {}_{{}_{\phi}\nwarrow}\!\!\widehat\otimes_{R} A$, then there exists a commutative diagram of equivalences
\[
\xymatrix{
\{\mathrm{prismatic}\ \mathrm{crystals}\ \mathrm{on}\ \mathcal X'/R\} \ar[r]^-{C_{\mathcal X/R}^{-1}}_-{\simeq} \ar[d]_-{\simeq} & \{q\mathrm{-crystals}\ \mathrm{on}\ \mathcal X/R\} \ar[d]_-{\simeq} \\
\mathrm{MIC}_{q}^{(-1)}(A'/R) \ar[r]^{F^*}_-{\simeq} & \mathrm{MIC}_{q}^{(0)}(A/R)
}
\]
if we stick to locally finite free crystals, finite projective modules and topologically quasi-nilpotent connections.
\end{thm}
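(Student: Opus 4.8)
The plan is to exploit that three of the four functors in the square are \emph{already} equivalences, so that the theorem reduces to checking commutativity. The top arrow $C_{\mathcal X/R}^{-1}$ is an equivalence by Theorem \ref{Liplus}; the left-hand vertical is the equivalence of Theorem \ref{equiv1} applied to $A'$ (which carries the coordinate $x'$ deduced from $x$ and has special fibre $\mathcal X'$); and the right-hand vertical is the equivalence of Corollary \ref{equiv2}. Granting commutativity up to natural isomorphism, $F^*$ is then the composite of three equivalences and is therefore itself an equivalence, which is all that remains to be shown. I would thus concentrate entirely on commutativity.

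First I would compare the underlying modules produced by the two paths. Fix a locally finite free prismatic crystal $E'$ on $\mathcal X'/R$ and set $M' := E'_{A'}$, the $A'$-module underlying its image under the left vertical. Applying $F^*$ yields the $A$-module $A \otimes_{A',F} M'$, where $F : A' \to A$ is the relative Frobenius. Along the other path, $C^{-1}E'$ is a $q$-crystal whose realization at $(A,\mathfrak a)$ is, by the definition of the Cartier functor on objects, the realization of $E'$ at $A$ regarded as a prism over $\mathcal X'$ through the Cartier structural map $\mathrm{Spf}(A/(p)_q) \to \mathcal X'$. That structural map is the one induced by $F$, so the crystal property of $E'$ gives $(C^{-1}E')_{(A,\mathfrak a)} = A \otimes_{A',F} E'_{A'} = A \otimes_{A',F} M'$. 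Both paths therefore produce the same underlying $A$-module, and it remains only to match the two twisted connections of level $0$.

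These connections are read off from stratifications on envelopes of the diagonal: on the prismatic side through Proposition \ref{equi} and the identification of $\widehat{A'\langle\omega\rangle}_{q(-1)}$ with the prismatic envelope of the diagonal (proposition 5.7 of \cite{GrosLeStumQuiros21}) used in Theorem \ref{equiv1}; on the $q$-crystalline side through the analogous description used in Corollary \ref{equiv2}, where the role of the diagonal envelope is played by the $q$-PD-envelope $\widehat{A\langle\xi\rangle}_{q}$ (theorem 7.3 of \cite{GrosLeStumQuiros20}). The decomposition $C = \rho \circ \beta$ from the proof of Theorem \ref{Liplus} shows that $C$ does nothing to the underlying $\delta$-algebra and only reinterprets the structural map through Frobenius. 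I would therefore establish that Frobenius induces a map $\widehat{A'\langle\omega\rangle}_{q(-1)} \to \widehat{A\langle\xi\rangle}_{q}$ compatible with both structural (Taylor) maps, and that pulling the prismatic stratification of $E'$ back along this map is exactly the $q$-PD-stratification defining $C^{-1}E'$. Translating back through the dictionaries between stratifications and connections (proposition 3.10 of \cite{GrosLeStumQuiros21} and theorem 7.3 of \cite{GrosLeStumQuiros20}), this says precisely that the level-$0$ connection attached to $C^{-1}E'$ is the image under $F^*$ of the level-$(-1)$ connection attached to $E'$, i.e. the square commutes.

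The main obstacle is this last compatibility of envelopes. One must verify, at the level of the explicit twisted divided-polynomial rings, that Frobenius carries the level-$(-1)$ generator $\omega$ to the level-$0$ generator $\xi$ in a manner compatible with the $\delta$-structure and with comultiplication, so that the operators $\partial_{q(-1)}$ and $\partial_{q}$ correspond under $F^*$. This rests on the identity $\phi(q) = q^p$ relating the $q^p$-calculus of level $-1$ to the $q$-calculus of level $0$, together with the explicit constructions of \cite{GrosLeStumQuiros20} and \cite{GrosLeStumQuiros21} recalled above; once it is in place, commutativity, and hence the theorem, follows formally.
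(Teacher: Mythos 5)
Your skeleton (three known equivalences plus commutativity force the fourth arrow to be an equivalence) is the same as the paper's, but you run the deduction in the opposite direction, and the difference is not cosmetic. The paper takes the \emph{bottom} arrow as the known input: both the commutativity of the square and the fact that $F^*$ is an equivalence are imported from \cite{GrosLeStumQuiros21} (proposition 6.9 and theorem 4.8 respectively, the latter being a Frobenius-descent statement for twisted connections of levels $-1$ and $0$), and together with theorem \ref{equiv1} and corollary \ref{equiv2} this \emph{deduces} that $C^{-1}$ is an equivalence. That is the point of the remark following the theorem: the argument is independent of \cite{Li21} and \emph{recovers} theorem \ref{Liplus} in this local setting. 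You instead feed in theorem \ref{Liplus} for the top arrow and deduce that $F^*$ is an equivalence. This is logically admissible, but it reintroduces the dependence on Li's theorem \ref{Lithm} that the authors deliberately avoid, and it turns the intended independent local proof of the Cartier equivalence into a circle if one also wants the remark's conclusion.

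Two further points. First, theorem \ref{Liplus} requires $\mathcal X$ smooth over $R/\mathfrak r$, whereas here $\mathcal X = \mathrm{Spf}(A/\mathfrak a)$ with $\mathfrak a$ only a closed $q$-PD-ideal containing $\mathfrak r A$; so $\mathcal X$ need not be smooth, and the $\mathcal X'$ of theorem \ref{Liplus}, namely $\mathcal X \widehat\otimes_{{}_{R/\mathfrak r}\nearrow \overline\phi} R/(p)_q$, need not coincide with $\mathrm{Spf}(A'/(p)_q)$. You would have to restrict to $\mathfrak a = \overline{\mathfrak r A}$ or argue around this before invoking \ref{Liplus}. Second, the commutativity you propose to check by hand --- the compatibility of Frobenius with the two envelopes $\widehat{A'\langle\omega\rangle}_{q(-1)}$ and $\widehat{A\langle\xi\rangle}_{q}$, their Taylor maps and comultiplications --- is precisely the content of proposition 6.9 of \cite{GrosLeStumQuiros21} and is where all the computation lives (note that Frobenius does \emph{not} simply send $\omega$ to $\xi$; compare the formula for $\phi(\omega)$ recalled in section \ref{sec6}). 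Your sketch correctly locates this obstacle but leaves it outstanding, whereas the paper discharges it by citation. In short: same square, opposite choice of which three arrows are taken as given; the paper's choice buys independence from \cite{Li21} and full generality in $\mathfrak a$, while yours still owes the envelope computation and a smoothness hypothesis.
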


\begin{proof}
We showed in proposition 6.9 of \cite{GrosLeStumQuiros21} that there exists such a commutative diagram. We proved in theorem 4.8 of loc.\ cit.\ that the bottom map is an equivalence and we have proved in proposition \ref{equiv1} and corollary \ref{equiv2} that both vertical maps are equivalences.
\end{proof}

\begin{rmks}
\begin{enumerate}
\item
Our proof relies on Frobenius descent for twisted modules endowed with a connection and is independent of the results in \cite{Li21} or in \cite{Chatzistamatiou20}.
In particular, we recover theorem \ref{Liplus} in the particular setting of this section.
\item Theorem \ref{bigdiag} can be seen as a $q$-deformation of proposition 9.17 in \cite{Xu19}.
\end{enumerate}
\end{rmks}

\section{Cohomology} \label{sec6}

We will show that prismatic cohomology (resp.\ $q$-crystaline cohomology) agrees with twisted de Rham cohomology of level $-1$ (resp.\ of level $0$) and prove a comparison theorem in de Rham cohomology.

We keep the assumptions and notations of the previous section.
More precisely, we consider a base prism of the form $(R, (p)_q)$ where $R$ is a $\delta$-$\mathbb Z[q]_{(p,q-1)}$-algebra and $\delta(q) = 0$.
We let $A$ be a complete $R$-algebra with a topologically \'etale coordinate $x$ (and we set $\delta(x)=0$).

\subsection*{Prismatic case}

If $M$ is an $A$-module endowed with a twisted connection $\nabla : M \to M \otimes_{A} \Omega_{A/R,q^p}$ of level $-1$, then its \emph{de Rham cohomology} $\mathrm H^i_{\mathrm{dR},q(-1)}(M)$ is simply the cohomology of the de Rham complex

\begin{align} \label{dRminun}
\left[M \overset \nabla \to M \otimes_{A} \Omega_{A/R,q^p}\right].
\end{align}
The twisted connection $\nabla$ provides a \emph{twisted hyper-differential operator of level $-1$ (and order $1$)} in the sense that $\nabla$ extends canonically to
\[
\xymatrix@R=0cm{
\widehat{A\langle \omega \rangle}_{q(-1)} \otimes'_{A }M \ar[rr]^-{\widetilde \nabla}& & M \otimes \Omega_{A/R,q^p}
\\ \omega \otimes s \ar@{|->}[rr] && s \otimes \mathrm dx + (q-1)x\nabla(s).}
\]
We know from proposition 5.7 in \cite{GrosLeStumQuiros21} that $ A[\omega]^{\delta,\wedge}_{d} = \widehat{A\langle \omega \rangle}_{q(-1)}$ is the completed ring of twisted divided polynomials of level $-1$.
It follows that a prismatic differential operator as in definition \ref{deldif} is then the same thing as a \emph{twisted hyper-differential operator of level $-1$}.

The de Rham complex \eqref{dRminun} is endowed with a semilinear frobenius $\phi$ induced by
\[
\phi : \Omega_{A/R,q^p} \to \Omega_{A/R,q^p}, \quad \mathrm d_{p^q}x \mapsto (p)_{q}x^{p-1}\mathrm d_{p^q}x.
\]
This formula comes from the explicit description
\[
\phi(\omega) = \sum_{k=1}^{p} {p-1 \choose k-1}_{q^p} (p)_q^kx^{p-k} \omega^{\{p\}}
\]
 of the absolute frobenius on $A\langle \omega \rangle_{q(-1)}$ that can be deduced from the computations in the proof of proposition 1.12 of \cite{GrosLeStumQuiros21}.

\begin{prop}[Prismatic Poincar\'e lemma]
If $E$ is a locally finite free prismatic crystal on $\mathcal X := \mathrm{Spf}(A/(p)_q)$ over $R$ and $\nabla : E_{A} \to E_{A} \otimes_{A} \Omega_{A/R,q^p}$ denotes the corresponding twisted connection of level $-1$, then the sequence
\[
0 \to E \to L(E_{A}) \overset{L(\nabla)} \to L(E_{A} \otimes_{A} \Omega_{A/R,q^p}) \to 0
\]
is exact.
\end{prop}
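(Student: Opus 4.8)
The plan is to test exactness on realizations. All three terms are prismatic sheaves on $\mathcal X$ over $R$, and exactness of a sequence of sheaves is local, so it suffices to show that for every bounded prism $B$ on $\mathcal X$ the realized sequence is exact. Writing $B' := B[\omega]^{\delta,\wedge}_d$ for the product of $B$ by $A$, Lemma \ref{compcris} identifies the middle and right terms with $B' \otimes'_A E_A$ and $B' \otimes'_A E_A \otimes_A \Omega_{A/R,q^p}$, and since $E$ is a crystal the first of these is just $E_{B'}$. The first map is then the crystal transition $E_B \to E_{B'}$, which is injective because $B \to B'$ is formally faithfully flat (Corollary \ref{extracor}) and $E$ is a sheaf; moreover the composite with $L(\nabla)_B$ vanishes since the image of $E_B$ consists precisely of the horizontal sections for the linearized connection. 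Thus the whole content is that $L(\nabla)_B$ is surjective with kernel exactly $E_B$.

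First I would reduce to the trivial crystal $E = \mathcal O_{\mathcal X/R}$. As $E$ is locally finite free, hence flat over $\mathcal O_{\mathcal X/R}$, tensoring the trivial sequence
\[
0 \to \mathcal O_{\mathcal X/R} \to L(A) \overset{L(\nabla_0)}\to L(\Omega_{A/R,q^p}) \to 0
\]
with $E$ preserves exactness, and Lemma \ref{getout} identifies $E \otimes_{\mathcal O_{\mathcal X/R}} L(A)$ with $L(E_A)$ and $E \otimes_{\mathcal O_{\mathcal X/R}} L(\Omega_{A/R,q^p})$ with $L(E_A \otimes_A \Omega_{A/R,q^p})$. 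The one point needing care is that, under these canonical identifications, the differential $\mathrm{Id}_E \otimes L(\nabla_0)$ corresponds to $L(\nabla)$; this is exactly the compatibility of the crystal (stratification) structure on $E$ with comultiplication, dictated by the twisted Leibniz rule defining $\nabla$, and is the linearization formalism of construction 6.9 in \cite{BerthelotOgus78} transported to our setting via Proposition \ref{linfunc}.

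It then remains to treat the trivial crystal, i.e.\ to prove a Poincar\'e lemma for the completed twisted divided polynomial algebra. Realized at $B$ the sequence reads
\[
0 \to B \to B[\omega]^{\delta,\wedge}_d \overset{L(\nabla_0)_B}\to B[\omega]^{\delta,\wedge}_d \otimes'_A \Omega_{A/R,q^p} \to 0,
\]
where $B[\omega]^{\delta,\wedge}_d = \widehat{B\langle \omega \rangle}_{q(-1)}$ by Proposition 5.7 of \cite{GrosLeStumQuiros21} and $\Omega_{A/R,q^p}$ is free on $\mathrm d_{q^p}x$. I would expand a general element as a convergent series $\sum_k a_k\,\omega^{\{k\}}$ in twisted divided powers and compute the effect of the $B$-linear operator $L(\nabla_0)_B$ on the basis $\omega^{\{k\}}$. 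The decisive feature of divided polynomials of level $-1$ is that this operator lowers the divided-power degree by one up to an invertible factor, so its kernel is exactly the degree-zero part $B$ and it is surjective, integration of divided powers requiring no denominators. The main obstacle is precisely this explicit computation: one must pin down the coefficients occurring in $L(\nabla_0)_B(\omega^{\{k\}})$ --- using the frobenius formula for $A\langle\omega\rangle_{q(-1)}$ recalled above --- verify that they are units, and check that passing to the completion preserves both the injectivity on the kernel and the surjectivity (the integrating operator $\omega^{\{k\}} \mapsto \mathrm{unit}\cdot\omega^{\{k+1\}}$ being continuous). Granting this, the realized sequence is exact for every $B$, and the proposition follows.
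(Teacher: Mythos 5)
Your proposal follows the paper's own proof almost step for step: reduce to realizations, pass to the trivial crystal via Lemma \ref{getout} (the compatibility of $L(\nabla)$ with $\mathrm{Id}_E\otimes L(\nabla_0)$ that you flag is indeed the only point needing care there), and then compute on the twisted divided power basis of $\widehat{A\langle\omega\rangle}_{q(-1)}$, where the paper's identity $L(\mathrm d)_A(\omega^{\{k+1\}})=\omega^{\{k\}}\otimes\mathrm dx$ confirms your claim that the differential lowers degree by one with unit (in fact trivial) coefficient, giving both the kernel and the surjectivity at once. One correction to the computation you defer: the relevant tool is the comultiplication formula $\Delta_A(\omega^{\{k\}})=\sum_{i+j=k}\omega^{\{i\}}\otimes\omega^{\{j\}}$ together with $\widetilde{\mathrm d}(\omega)=1$ and $\widetilde{\mathrm d}(\omega^{\{k\}})=0$ for $k>1$ (because $\mathrm d$ is a differential operator of order $1$), not the frobenius formula for $\phi(\omega)$, which enters only in the discussion of the frobenius action on the de Rham complex and plays no role in the Poincar\'e lemma itself.
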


\begin{proof}
We have to show that, for any bounded prism $B$ on $\mathcal X/R$, up to a formally faithfully flat map, the sequence
\[
0 \to E_{B} \to L(E_{A})_{B} \overset{L(\nabla)} \to L(E_{A} \otimes_{A} \Omega_{A/R,q^p})_{B} \to 0
\]
is exact.
Actually, since $A$ is a covering of the site, we may assume that there exists a $\delta$-morphism $A \to B$.
Moreover, thanks to assertion 2) of lemma \ref{compcris}, we may assume that $B=A$. 
Finally, using lemma \ref{getout}, we can assume that $E = \mathcal O_{\mathcal X/R}$ is the structural sheaf.
We are therefore reduced to prove the exactness of the augmented de Rham complex
\[
0 \to A \to \widehat{A\langle \omega \rangle}_{q(-1)} \overset{L(\mathrm d)_{A}} \to \widehat{A\langle \omega \rangle}_{q(-1)} \otimes_{A} \Omega_{A/R,q^p} \to 0.
\]
Hence, it is sufficient to show that
\[
\forall k \in \mathbb N, \quad L(\mathrm d)_{A}(\omega^{\{k+1\}}) = \omega^{\{k\}} \otimes \mathrm dx.
\]
But this follows immediately from the standard formulas
\[
\Delta_{A}(\omega^{\{k\}}) = \sum_{i+j=k} \omega^{\{i\}} \otimes \omega^{\{j\}}, \quad \widetilde d(\omega) = 1 \quad \mathrm{and} \quad \widetilde d(\omega^{\{k\}}) = 0\ \mathrm{for}\ k > 1
\]
(since $d$ is a twisted differential operator of order $1$).
\end{proof}

The Poincar\'e lemma provides an isomorphism on cohomology:

\begin{thm} \label{prisdR}
If $E$ is a locally finite free prismatic crystal on $\mathcal X := \mathrm{Spf}(A/(p)_q)$ over $R$, then
\[
\mathrm R \Gamma((\mathcal X/R)_{\mathbb{\Delta}}, E) \simeq \left[E_{A} \overset \nabla \to E_{A} \otimes_{A} \Omega_{A/R,q^p}\right].
\]
\end{thm}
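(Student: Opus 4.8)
The plan is to apply $\mathrm R\Gamma((\mathcal X/R)_{\mathbb{\Delta}}, -)$ to the resolution furnished by the Prismatic Poincar\'e lemma (the preceding proposition) and to exploit the acyclicity of linearizations established earlier. First I would reinterpret the short exact sequence of the Poincar\'e lemma as a quasi-isomorphism in the derived category of the prismatic topos, namely
\[
E \simeq \left[L(E_{A}) \overset{L(\nabla)}{\longrightarrow} L(E_{A} \otimes_{A} \Omega_{A/R,q^p})\right],
\]
where the two-term complex is placed in degrees $0$ and $1$. Since $E$ is a locally finite free prismatic crystal, its realization $E_{A}$ is finite projective, and because $\Omega_{A/R,q^p}$ is free of rank one, both $E_{A}$ and $E_{A} \otimes_{A} \Omega_{A/R,q^p}$ are finite projective $A$-modules, so the terms of the resolution are linearizations of finite projective modules.

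The key step is then a termwise computation of $\mathrm R\Gamma$. Corollary \ref{dervr} gives $\mathrm R\Gamma((\mathcal X/R)_{\mathbb{\Delta}}, L(M)) = M$ for any finite projective $A$-module $M$; in particular each term of the resolution is $\mathrm R\Gamma$-acyclic (its higher derived global sections vanish) and its global sections coincide with the underlying module. Consequently $\mathrm R\Gamma$ of the complex may be computed by applying $\Gamma$ degreewise, and we obtain $\mathrm R\Gamma((\mathcal X/R)_{\mathbb{\Delta}}, E) \simeq [E_{A} \to E_{A} \otimes_{A} \Omega_{A/R,q^p}]$. To identify the differential, I would invoke the remark following proposition \ref{linfunc}, which asserts $\mathrm R\Gamma((\mathcal X/R)_{\mathbb{\Delta}}, L(D)) = D$ for a prismatic differential operator $D$; applied to $D = \nabla$ this shows that the map induced by $L(\nabla)$ on global sections is $\nabla$ itself, yielding the asserted twisted de Rham complex of level $-1$.

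The main obstacle is making the passage ``$\mathrm R\Gamma$ commutes with the resolution'' precise: one must justify that a bounded complex of $\mathrm R\Gamma$-acyclic objects computes its hypercohomology. This is standard homological algebra via the hypercohomology spectral sequence $\mathrm E_1^{p,r} = \mathrm R^{r}\Gamma(\text{term}^{p})$, which degenerates at $\mathrm E_1$ because $\mathrm R^{r}\Gamma$ of each term vanishes for $r>0$ by corollary \ref{dervr}, leaving precisely the complex of global sections. The only point requiring care is that the relevant cohomologies vanish for \emph{both} terms simultaneously, which is exactly what the finite projectivity of $E_{A}$ and of $E_{A} \otimes_{A} \Omega_{A/R,q^p}$ guarantees; everything else is a formal consequence of the Poincar\'e lemma together with corollaries \ref{dervr} and \ref{jexact}.
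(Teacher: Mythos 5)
Your proposal is correct and follows essentially the same route as the paper: the paper's proof likewise replaces $E$ by the two-term complex of linearizations coming from the prismatic Poincar\'e lemma and then applies $\mathrm Re_{\mathcal X/R*}$ termwise, using corollary \ref{dervr} to identify each term with the underlying finite projective module and the remark after proposition \ref{linfunc} to identify the differential with $\nabla$. Your added justification via the hypercohomology spectral sequence simply makes explicit a step the paper leaves implicit.
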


\begin{proof} With the notations of section \ref{sec4}, we have

\begin{align*}
\mathrm  R \Gamma((\mathcal X/R)_{\mathbb{\Delta}}, E) &\simeq \mathrm R e_{\mathcal X/R*}E
\\& \simeq \mathrm R e_{\mathcal X/R*} \left[L(E_{A}) \overset{L(\nabla)} \to L(E_{A} \otimes_{A} \Omega_{A/R,q^p})\right]
\\ & \simeq \left[E_{A} \overset \nabla \to E_{A} \otimes_{A} \Omega_{A/R,q^p}\right].\qedhere
\end{align*}
\end{proof}

As a consequence of this theorem, we see that
\[
\forall i \in \mathbb N, \quad \mathrm H^i((\mathcal X/R)_{\mathbb{\Delta}}, E) \simeq \mathrm H^i_{\mathrm{dR},q(-1)}(E_{A}).
\]

\begin{rmks}
\begin{enumerate}
\item
From theorem \ref{prisdR}, we obtain in this setting a canonical \emph{Hodge decomposition}
\[
\mathrm R \Gamma((\mathcal X/R)_{\mathbb{\Delta}}, \overline {\mathcal O}) \simeq \overline A \oplus \Omega_{\overline A/\overline R}[-1],
\]
in which the bar denotes reduction modulo $(p)_q$ and $ \Omega_{\overline A/\overline R}$ is the usual module of differentials.
This is an incarnation in the derived category of theorem 6.3 of \cite{BhattScholze22}: the maps induced in cohomology can easily be identified with the isomorphisms
\[
\mathrm H^k((\mathcal X/R)_{\mathbb{\Delta}}, \overline {\mathcal O}) \simeq \Omega^k_{\overline A/\overline R}\{k\}
\]
of loc.\ cit.
We also obtain an identification of the usual differential $\mathrm d : \overline A \to \Omega_{\overline A/\overline R}$ with the connection that comes by functoriality from the Bockstein exact sequence
\[
0 \longrightarrow \overline R \overset {(p)_{q}} \longrightarrow R/(p)_{q}^2 \longrightarrow \overline R \longrightarrow 0.
\]
\item This Hodge decomposition is also obtained by Tian (remark 4.16 in \cite{Tian21}).
\item
The isomorphism of theorem \ref{prisdR} is functorial: it sends the semilinear frobenius endomorphism $\phi$ of $\mathrm R \Gamma((\mathcal X/R)_{\mathbb{\Delta}}, E)$ to the semilinear endomorphism $\phi$ of $[E_{A} \overset \nabla \to E_{A} \otimes_{A} \Omega_{A/R,q^p}]$.
\end{enumerate}
\end{rmks}

\subsection*{$q$-crystalline case}

We will now consider the $q$-crystalline case and therefore assume that $R$ is endowed with a $q$-PD-ideal $\mathfrak r$.
We will assume that $q-1 \in \mathfrak r$.
We first recall the following:

\begin{dfn}
The \emph{(complete) $q$-PD-envelope} of an ideal $J$ in a $\delta$-$R$-algebra $B$ is a (complete) $q$-PD-pair $(C,K)$ over $R$ which which is universal for morphisms $B \to C$ sending $J$ into $K$.
\end{dfn}

We may also say that $C$ is the (complete) $q$-PD-envelope, meaning that there exists such a $K$, which is then unique.

Let us now reproduce the following from lemma 16.10 of \cite{BhattScholze22}:

\begin{prop}[Bhatt-Scholze] \label{lift3}
Assume that $B$ is a formally flat bounded $\delta$-$R$-algebra such that $B/(q-1)B$ is $p$-torsion free.
If $g \in B$ is completely regular over $R$, then $B' := B[\phi(g)/(p)_{q}]^{\delta,\wedge}$ is the complete $q$-PD-envelope of $J:= \mathfrak rB + (g)$.
Moreover, if $J'$ denotes the $q$-PD-ideal of $B'$, then $B/J\simeq B'/J'$.
\end{prop}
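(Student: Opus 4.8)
The plan is to identify the concrete $\delta$-envelope $B' := B[\phi(g)/(p)_{q}]^{\delta,\wedge}$ with the abstract complete $q$-PD-envelope of $J = \mathfrak rB + (g)$ by matching their universal properties. First I would record, using Proposition~\ref{lift2} and Corollary~\ref{extracor}, that $B'$ is a complete bounded formally flat $\delta$-$R$-algebra; in particular it is $(p)_{q}$-torsion free, so it can serve as the underlying ring of a $q$-PD-pair. I write $J'$ for the closed $q$-PD-ideal of $B'$ generated by the image of $J$, and the task is to show both that $(B',J')$ solves the envelope problem and that $B'/J' \simeq B/J$.

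For the necessity half of the universal property, let $(C,K)$ be any complete $q$-PD-pair over $(R,\mathfrak r)$ together with a $\delta$-$R$-morphism $f : B \to C$ satisfying $f(J) \subseteq K$. Then $f(g) \in K$, so the $q$-PD axiom gives $\phi(f(g)) - (p)_{q}\delta(f(g)) \in (p)_{q}K$, whence $f(\phi(g)) = \phi(f(g)) \in (p)_{q}C$. By the universal property of the $\delta$-envelope $B[\phi(g)/(p)_{q}]^{\delta,\wedge}$ and the completeness of $C$, the morphism $f$ factors uniquely through $B \to B'$. Thus it only remains to equip $B'$ with a genuine $q$-PD structure for which $B \to B'$ is a $q$-PD-morphism; the factorization will then automatically send $J'$ into $K$ (since $K$ is a $q$-PD-ideal and $\mathfrak rC \subseteq K$), so it is a morphism of $q$-PD-pairs and is unique.

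The decisive point, and the main obstacle, is to verify the $q$-PD axiom $\phi(f) - (p)_{q}\delta(f) \in (p)_{q}J'$ on all of $J'$. On the part coming from $\mathfrak rB'$ it is inherited from $(R,\mathfrak r)$ by flat base change along $R \to B'$, and it propagates from generators to the whole closed ideal by the $q$-PD calculus, so everything hinges on the single generator $g$. Setting $w := \phi(g)/(p)_{q} \in B'$ one computes
\[
\phi(g) - (p)_{q}\delta(g) = (p)_{q}w - (p)_{q}\delta(g) = (p)_{q}\,(w - \delta(g)),
\]
so, as $B'$ is $(p)_{q}$-torsion free, the axiom at $g$ is exactly the requirement that $w - \delta(g) \in J'$. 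This element is precisely the relevant $q$-divided power of $g$, and the whole purpose of adjoining $\phi(g)/(p)_{q}$ is to make it (and, inductively, the higher $q$-divided powers) available inside $B'$; showing that these generate a legitimate closed $q$-PD-ideal is the $q$-deformation of the classical fact (corollary 2.39 of \cite{BhattScholze22}, already used in Lemma~\ref{pcase}) that adjoining $\phi(g)/p$ produces the divided power envelope, and I expect this identification to be the technical heart of the proof.

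Finally, for $B'/J' \simeq B/J$ I would reduce modulo $\mathfrak r$. Since $q - 1 \in \mathfrak r$, we have $q \equiv 1$ and hence $(p)_{q} \equiv p$ there, so $B'/\mathfrak rB'$ is the $\delta$-envelope obtained by adjoining $\phi(g)/p$ to $B/\mathfrak rB$; by the $\delta$-interpretation of divided powers this is the $p$-adic divided power envelope of $(g)$, and $J'$ reduces to its PD-ideal. The classical identity that a divided power envelope modulo its PD-ideal recovers the base modulo the original ideal then yields $B'/J' \simeq (B/\mathfrak rB)/(g) = B/J$; here the hypotheses that $B/(q-1)B$ is $p$-torsion free and that $g$ is completely regular over $R$ are precisely what is needed to invoke corollary 2.39 of \cite{BhattScholze22}.
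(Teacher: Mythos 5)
Your overall architecture is sound --- identify $B'$ with the envelope by matching universal properties, and note that once $J'$ is known to be a $q$-PD-ideal the factorization of any $f : B \to C$ with $f(J) \subseteq K$ follows from $\phi(f(g)) \in (p)_qC$ and the universal property of the $\delta$-envelope. But the decisive step is exactly the one you do not carry out: you reduce the problem to showing that $w - \delta(g) \in J'$ (and more generally that the ``higher $q$-divided powers'' of $g$ assemble into a legitimate closed $q$-PD-ideal), and then write that you \emph{expect} this to be the technical heart of the proof. It is, and the paper has to work for it. The paper's device is to define $J'$ directly as the kernel of $B' \to \overline{B'} \to \overline B/\overline J \simeq B/J$ (bars denoting reduction mod $q-1$), which makes $B'/J' \simeq B/J$ essentially automatic, and then to prove that this $J'$ is a $q$-PD-ideal by introducing $J'' := J' \cap \mathfrak N_{B'}$ with $\mathfrak N_{B'} = \phi^{-1}((p)_qB')$: the intersection with the Nygaard ideal is visibly a closed $q$-PD-ideal containing $JB'$, one checks $\overline{J''} = \overline{J'}$ by invoking the $q=1$ case (where $B'$ is the classical completed PD-envelope and closed PD-ideals squeezed between $\overline J\,\overline{B'}$ and $\overline{J'}$ are forced to equal $\overline{J'}$), and one concludes $J'' = J'$ because $q-1$ itself lies in the Nygaard ideal. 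None of this is in your proposal, and without some such argument the claim that $J'$ satisfies the $q$-PD axiom remains unproved.

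A second, smaller problem is your final reduction: you pass to $B/\mathfrak rB$ and invoke corollary 2.39 of \cite{BhattScholze22} there, but the hypothesis of the proposition only guarantees that $B/(q-1)B$ is $p$-torsion free; $B/\mathfrak rB$ is a further quotient and need not inherit $p$-torsion-freeness, which is precisely what the divided-power identification requires. The paper reduces modulo $q-1$ (where $(p)_q$ becomes $p$ and the hypothesis applies verbatim) rather than modulo $\mathfrak r$. Also note that your definition of $J'$ as ``the closed $q$-PD-ideal generated by the image of $J$'' presupposes that such an ideal exists in $B'$, which is part of what is being proved; defining $J'$ as a kernel, as the paper does, avoids this circularity and delivers the isomorphism $B'/J' \simeq B/J$ for free.
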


\begin{proof}
Recall from proposition \ref{lift2} that $B'$, which is derived complete, is also completely flat over $R$.
Since $R$ is bounded, this implies that $B'$ is discrete, classically complete and bounded (theorem \ref{compform} in the appendix).
Also, when $q=1$, the ring
\[
B' = B[\phi(g)/p]^{\delta,\wedge} \simeq B^{\mathrm{PD},\wedge}
\]
is the completed PD-envelope of $J$ and the result therefore holds.
In particular, if $J''$ is another closed PD-ideal such that $JB' \subset J'' \subset J'$, then necessarily $J''=J'$.
We now do the general case and indicate (only in this proof) reduction modulo $q-1$ with a bar.
We denote by $J'$ be the kernel of the morphism
\[
B' \to \overline B' \to \overline B/\overline J \simeq B/J.
\]
It is sufficient to show that $J'$ is a $q$-PD-ideal.
The assertion will then follow from the universal property of $B'$.

Since $\overline B$ is $p$-torsion free (and therefore a bounded prism), it follows from the case $q=1$ that $(\overline {B'}, \overline {J'})$ is the PD-envelope of $\overline J$.
Let us denote by $J''$ the intersection of $J'$ with the Nygaard ideal $\phi^{-1}((p)_qB')$.
$J''$ is clearly a closed $q$-PD-ideal containing $JB'$ and it is therefore sufficient to show that $J''=J'$.
But it is then also true that $\overline {J''}$ is a closed PD-ideal of $\overline B'$ containing $\overline J\overline {B'}$ and it follows from the case $q=1$ that $\overline {J''} = \overline {J'}$.
It follows from the fact that $q - 1$ belongs to the Nygaard ideal that $J''=J'$.
\end{proof}

Let us assume now that $A$ is endowed with a closed $q$-PD-ideal $\mathfrak a$ such that $\mathfrak rA \subset \mathfrak a$.
If $M$ is an $A$-module endowed with a twisted connection $\nabla : M \to M \otimes_{A} \Omega_{A/R,q}$, then its de Rham cohomology $\mathrm H^i_{\mathrm{dR},q}(M)$ is the cohomology of the de Rham complex
\[
\left[M \overset \nabla \to M \otimes_{A} \Omega_{A/R,q}\right].
\]

From now on, we add the following technical condition for a $q$-PD-pair $(B,J)$:

$\quad (\star) \quad$ $B/(q-1)B$ is $p$-torsion free.

Note that all the previous results are still valid with this extra assumption.
Based on proposition \ref{lift3}, we can then apply the prismatic method to the $q$-crystalline situation and show that:

\begin{thm} \label{qcrdR}
If $E$ is a locally finite free $q$-crystal on $\mathcal X := \mathrm{Spf}(A/\mathfrak a)$ over $R$, then
\[
\mathrm R\Gamma((\mathcal X/R)_{q\mathrm{-CRIS}}, E) \simeq \left[E_A \overset \nabla \to E_A \otimes_{A} \Omega_{A/R,q}\right]. \qed
\]
\end{thm}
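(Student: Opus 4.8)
The plan is to transport to the $q$-crystalline site the exact two-step mechanism that yielded Theorem \ref{prisdR}: a Poincaré lemma resolving $E$ by linearized twisted de Rham terms, followed by the derived vanishing $\mathrm R\Gamma(L(M)) = M$ for finite projective $M$. Everything in Section \ref{sec4} and the prismatic half of the present section used only two structural facts about the site: that $A$ covers the final object and that the self-product of $A$ is a computable, formally faithfully flat algebra. In the $q$-crystalline setting both are supplied by Proposition \ref{lift3}, which identifies $B' := B[\xi][\phi(\xi)/(p)_q]^{\delta,\wedge}$ with the complete $q$-PD-envelope of the diagonal and, via Proposition \ref{lift2}, makes $B \to B'$ formally faithfully flat. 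This is the single genuinely new input; the rest is a faithful rerun of the prismatic argument with the completed twisted divided polynomial ring $\widehat{A\langle \xi \rangle}_q$ of level $0$ in place of $\widehat{A\langle \omega \rangle}_{q(-1)}$.

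First I would rebuild the linearization formalism of Section \ref{sec4} over $q\mathrm{-CRIS}$. Writing $e_A : (A,\mathfrak a)_{q\mathrm{-CRIS}} \to \mathbb S\mathrm{ets}$ for the final morphism of the localized site (whose final object is $(A,\mathfrak a)$, so $e_{A*}E = E_A$) and $j_A : q\mathrm{-CRIS}(A) \to q\mathrm{-CRIS}(\mathcal X/R)$ for the localization, I set $L(M) := j_{A*}e_A^*M$. The key acyclicity statement, the analog of Proposition \ref{requ}, is that $\mathrm H^i((A,\mathfrak a)_{q\mathrm{-CRIS}}, \mathcal O) = 0$ for $i > 0$ and every complete bounded $q$-PD-pair; here I would run the \v Cech-Alexander computation against the covering by iterated self-products $\widehat{A\langle \xi \rangle}_q$, which are faithfully flat by Proposition \ref{lift3}, and deduce acyclicity exactly as Bhatt and Scholze do in the prismatic case. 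Representability of $j_A^{-1}(B,J)$ by the product $(B',J')$ then gives $\mathrm R^i j_{A*}E = 0$ as in Corollary \ref{jexact}, and the chain of adjunctions yields $\mathrm R\Gamma((\mathcal X/R)_{q\mathrm{-CRIS}}, L(M)) = M$ exactly as in Corollary \ref{dervr}.

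Next I would establish the level-$0$ Poincaré lemma, the exactness of
\[
0 \to E \to L(E_A) \overset{L(\nabla)}\to L(E_A \otimes_A \Omega_{A/R,q}) \to 0.
\]
Since $A$ covers the site, and using the $q$-crystalline analogs of Lemma \ref{compcris} and Lemma \ref{getout}, I would reduce to $B = A$ and $E = \mathcal O_{\mathcal X/R}$, landing on the augmented twisted de Rham complex built from $\widehat{A\langle \xi \rangle}_q$, which is the $q$-PD-envelope of the diagonal by theorem 7.3 of \cite{GrosLeStumQuiros20}. Exactness then reduces, as in the prismatic case, to the single identity $L(\mathrm d_q)_A(\xi^{\{k+1\}}) = \xi^{\{k\}} \otimes \mathrm d_q x$, which drops out of the comultiplication formula on twisted divided powers together with the fact that $\mathrm d_q$ is a twisted differential operator of order one.

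Finally, applying $\mathrm Re_{\mathcal X/R*}$ to the resolution and feeding in $\mathrm R\Gamma(L(M)) = M$ term by term gives
\begin{align*}
\mathrm R\Gamma((\mathcal X/R)_{q\mathrm{-CRIS}}, E) &\simeq \mathrm Re_{\mathcal X/R*}\left[L(E_A) \overset{L(\nabla)}\to L(E_A \otimes_A \Omega_{A/R,q})\right] \\
&\simeq \left[E_A \overset\nabla\to E_A \otimes_A \Omega_{A/R,q}\right],
\end{align*}
which is the claim. I expect the main obstacle to be the positive-degree vanishing of $\mathcal O$-cohomology over a single $q$-PD-pair underlying the analog of Corollary \ref{dervr}: in the prismatic case this was imported directly from Bhatt and Scholze, whereas here it has to be secured through the faithful flatness of the $q$-PD-envelope in Proposition \ref{lift3}, and keeping the technical condition $(\star)$ in force throughout the \v Cech argument is where care is needed. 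By contrast, the combinatorics behind the Poincaré lemma is routine once the level-$0$ twisted divided power calculus of \cite{GrosLeStumQuiros20} is available. One could instead deduce the statement indirectly from Theorem \ref{Liplus} and Theorem \ref{prisdR}, but that would require separately checking that level raising is compatible with de Rham cohomology, which the direct method avoids.
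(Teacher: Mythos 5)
Your proposal is correct and follows exactly the route the paper intends: the statement is given with its proof omitted precisely because, as the text preceding it says, one ``applies the prismatic method to the $q$-crystalline situation'' based on Proposition \ref{lift3}, which is what you carry out (linearization, \v Cech--Alexander acyclicity via the faithfully flat self-products $\widehat{A\langle \xi \rangle}_q$, the level-$0$ Poincar\'e lemma, and the derived vanishing $\mathrm R\Gamma(L(M)) = M$). You also correctly identify the two genuinely new inputs --- the identification of the complete $q$-PD-envelope in Proposition \ref{lift3} and the standing hypothesis $(\star)$ --- so your write-up is a faithful expansion of the paper's intended argument.
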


As a consequence, we will have
\[
\forall i \in \mathbb N, \quad \mathrm H^i((\mathcal X/R)_{q\mathrm{-CRIS}}, E) \simeq \mathrm H^i_{\mathrm{dR},q}(E_{A}).
\]

\subsection*{Comparison}

We finish by proving that raising level provides an isomorphism on de Rham complexes.
We write $A' := R {}_{{}_{\phi}\nwarrow}\!\!\widehat\otimes_{R} A$ and we consider $A$ as an $A'$-algebra via the relative Frobenius $F : A' \to A$.

\begin{thm} \label{lastthm}
If $M'$ is a complete formally flat $A'$-module endowed with a topologically quasi-nilpotent twisted connection of level $-1$ and $M := A \otimes_{A'} M'$, then frobenius induces a quasi-isomorphism
\[
[M' \to M' \otimes \Omega_{A'/R,q^p}] \simeq [M \to M \otimes \Omega_{A/R,q}].
\]
\end{thm}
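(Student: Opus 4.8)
The plan is to recognise each twisted de Rham complex as the cohomology of the crystal it classifies, and then to pass from one to the other through the Cartier transform. Starting from $(M',\nabla')$, note that $A'$ inherits the topologically étale coordinate $x'$ from $x$ by base change along $\phi$, so theorem \ref{equiv1} attaches to it a (locally finite free) prismatic crystal $E'$ on $\mathcal X'/R$. The commutativity of the diagram in theorem \ref{bigdiag}, whose lower horizontal arrow is the level-raising functor $F^*$, then identifies $F^*(M',\nabla') = (M,\nabla)$ with the $q$-crystal $E := C^{-1}(E')$ on $\mathcal X/R$ produced by the Cartier transform.

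Next I would string together the three cohomological identifications already available. Theorem \ref{prisdR}, read over $A'$, gives
\[
\mathrm R\Gamma((\mathcal X'/R)_{\mathbb{\Delta}}, E') \simeq \left[M' \overset{\nabla'}{\to} M' \otimes_{A'} \Omega_{A'/R,q^p}\right];
\]
theorem \ref{qcrdR}, read over $A$, gives
\[
\mathrm R\Gamma((\mathcal X/R)_{q\mathrm{-CRIS}}, E) \simeq \left[M \overset{\nabla}{\to} M \otimes_{A} \Omega_{A/R,q}\right];
\]
and the cohomological compatibility of the Cartier transform (theorem \ref{Liplus}) supplies
\[
\mathrm R\Gamma((\mathcal X/R)_{q\mathrm{-CRIS}}, E) \simeq \mathrm R\Gamma((\mathcal X'/R)_{\mathbb{\Delta}}, E').
\]
Composing these three quasi-isomorphisms already shows that the level $-1$ de Rham complex of $M'$ and the level $0$ de Rham complex of $M$ are quasi-isomorphic.

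Two points then need attention. First, one must check that the abstract quasi-isomorphism just obtained coincides with the map \emph{induced by the relative Frobenius} $F : A' \to A$, rather than being a mere existence statement; I would deduce this from naturality, since the lower arrow of theorem \ref{bigdiag} is $F^*$ by definition and the comparisons of theorems \ref{prisdR} and \ref{qcrdR} are functorial (in particular compatible with $\phi$, as recorded in the remarks following theorem \ref{prisdR}), so that the composite is compatible with $F$ at the level of complexes. Second, and this is the step I expect to be the main obstacle, theorems \ref{bigdiag}, \ref{prisdR} and \ref{qcrdR} are phrased for locally finite free crystals and finite projective modules, whereas $M'$ here is only complete formally flat. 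I would close this gap by a dévissage: using topological quasi-nilpotence of $\nabla'$ and the faithfully flat covering of proposition \ref{covering} together with the linearization of definition \ref{lindf} and the Poincaré mechanism of lemma \ref{getout}, one reduces the quasi-isomorphism to the structural case $M' = A'$, where both complexes are explicit and the statement becomes the $q$-deformed Cartier isomorphism comparing $[A' \to \Omega_{A'/R,q^p}]$ with $[A \to \Omega_{A/R,q}]$ via $F$.
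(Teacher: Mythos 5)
Your overall strategy --- identify both de Rham complexes with the cohomology of the crystals they classify and transport one to the other through the Cartier transform --- is exactly the alternative route the authors themselves flag in the remarks following the theorem, where they observe that when $M'$ is \emph{finitely presented} and the connection is topologically quasi-nilpotent one can recover the result from theorem \ref{Liplus} together with theorems \ref{prisdR} and \ref{qcrdR}. The paper's actual proof is entirely different and much more hands-on: it uses the freeness of $A$ over $A'$ on the basis $1, x, \ldots, x^{p-1}$ to decompose $M = \oplus_{k=0}^{p-1}\, x^k M'$, computes $\theta(x^k s) = x^{k-1}\bigl(x'\theta'(s) + (k)_q s\bigr)$ for $1 \leq k < p$, identifies the $k = p-1$ summand of the target complex with the source complex, and shows the remaining summands are acyclic by proving that $s \mapsto x'\theta'(s) + (k)_q s$ is bijective via derived Nakayama and reduction to the classical Cartier isomorphism. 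This direct computation is what makes the theorem usable as an independent input (the authors note it yields a new proof of theorem \ref{Liplus} in this setting), and, crucially, it is valid for an arbitrary complete formally flat $M'$.

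That last point is where your proposal has a genuine gap. Theorems \ref{equiv1}, \ref{bigdiag}, \ref{prisdR} and \ref{qcrdR} are all stated for locally finite free crystals and finite projective modules, so the whole first part of your argument applies only when $M'$ is finite projective, whereas the theorem assumes only that $M'$ is complete and formally flat. You acknowledge this, but the d\'evissage you sketch does not close it: a general complete formally flat $A'$-module is not built out of $A'$ by finite direct sums and direct summands, so the ``reduce by additivity to $M' = A'$'' mechanism that works in lemma \ref{getout} is not available here, and neither proposition \ref{covering} nor the linearization machinery even produces a crystal to work with when $M'$ is not finite projective. A second, smaller issue: even in the finite projective case, the claim that the composite abstract quasi-isomorphism coincides with the map induced by Frobenius requires an actual verification, not just an appeal to naturality --- in particular the induced map on differentials is the \emph{divided} Frobenius $\mathrm d x' \mapsto x^{p-1}\mathrm d_q x$, a normalization your proposal never confronts and which is precisely the delicate point the paper's proof tracks explicitly through its decomposition.
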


\begin{proof}
Our map is induced by frobenius
\[
F : M' \to M := A \otimes_{A'} M', \quad s \mapsto 1 \otimes s,
\]
but we have to be careful because the frobenius on differentials is induced by the divided Frobenius
\[
[F] : \Omega_{A'/R,q^p} \to \Omega_{A/R,q}, \quad \mathrm dx' \mapsto x^{p-1}\mathrm d_qx,
\]
where we write $x':=1 {}_{{}_{\phi}\nwarrow}\!\!\widehat\otimes x \in A'$.
Let us denote by $\theta, \theta'$ the corresponding twisted derivations of level $0$ and $-1$ respectively and set
\[
[F] : M' \to M , \quad s \mapsto x^{p-1}F(s).
\]
Let us also recall that
\[
\forall s \in M', \quad \theta(1 \otimes s) = x^{p-1} \otimes \theta'(s).
\]

Thus, we have to prove that the vertical map of complexes
\[
\xymatrix{M' \ar[r]^{\theta'} \ar[d]^{F}& M' \ar[d]^{[F]} \\ M \ar[r]^\theta & M}
\]
is a quasi-isomorphism.
Since $A$ is free on $A'$ with generators $1, x, \ldots, x^{p-1}$, we have
\[
M := A \otimes_{A'} M' \simeq \oplus_{k=0}^{p-1}\ A'x^k \otimes_{A'} M' = \oplus_{k=0}^{p-1}\ x^kM'.
\]
as an $A'$-module.
On the one hand,
\[
\theta(s) = \theta (1 \otimes s) = x^{p-1} \otimes \theta'(s) = x^{p-1} \theta'(s),
\]
and there exists therefore a vertical isomorphism of complexes
\[
\xymatrix{M' \ar[r]^{\theta'} \ar[d]^{F} & M' \ar[d]^{[F]} \\ M' \ar[r]^\theta & x^{p-1} M'. }
\]
On the other hand, we have for all $1 \leq k < p$,

\begin{align*}
\theta(x^ks)&= \theta(x^k \otimes s)
\\ &= x^k \theta(1 \otimes s) + (k)_qx^{k-1} \otimes s
\\ &= x^kx^{p-1} \otimes \theta'(s) + x^{k-1} \otimes (k)_q s
\\ &= x^{k-1} \otimes x'\theta'(s) + x^{k-1} \otimes (k)_q s
\\ &= x^{k-1}(x'\theta'(s)+ (k)_qs).
\end{align*}

Our assertion therefore reduces to showing that, for $1 \leq  k < p$, the map
\[
M' \to M', \quad s \mapsto x'\theta'(s)+ (k)_qs
\]
is bijective.
We may then invoke the derived Nakayama lemma (see proposition \ref{Nak}) and assume that $q=1$ and $p=0$, in which case we fall back onto classical Cartier isomorphism (corollary 2.28 of \cite{OgusVologodsky07}).
We could also only assume that $(p)_q=0$ and rely on corollary 8.10 of \cite{GrosLeStumQuiros19}, which is more in the spirit of this article.
\end{proof}

\begin{rmks}
\begin{enumerate}
\item 
As a consequence of this theorem, there exist isomorphisms
\[
\forall k \in \mathbb N, \quad \mathrm H^k_{\mathrm{dR},q(-1)}(M') \simeq \mathrm H^k_{\mathrm{dR},q}(M).
\]
\item
In the case $q = 1$, theorem \ref{lastthm} is not new.
In this situation, it is in fact just an improvement, already noticed by Arthur Ogus, of a result of Atsushi Shiho (\cite{Shiho15}, theorem 4.4).
\item When $M'$ is finitely presented and the connection is topologically quasi-nilpotent, one can recover this result from theorem \ref{Liplus} thanks to theorems \ref{prisdR} and \ref{qcrdR}.
\item Conversely, theorem \ref{lastthm} provides us with a new proof of theorem \ref{Liplus} in our particular setting.
\end{enumerate}
\end{rmks}

\begin{cor} \label{imphi}
If $E'$ is a locally finite free prismatic crystal on $\mathcal X' := \mathrm{Spf}(A'/(p)_{q})$ over $R$, then frobenius induces an isomorphism
\[
\mathrm R\Gamma\left((\mathcal X'/R)_{\mathbb\Delta}, E'\right)\left[\frac 1{(p)_q}\right] \simeq \mathrm R\Gamma\left((\mathcal X/R)_{\mathbb\Delta}, F^*E'\right)\left[\frac 1{(p)_q}\right].
\]

\end{cor}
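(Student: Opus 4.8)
The plan is to compute both sides as explicit twisted de Rham complexes and to recognise the arrow induced by frobenius as the relative Frobenius comparison of these complexes, which acquires a factor of $(p)_q$ in positive degree and hence becomes invertible exactly after inverting $(p)_q$. Concretely, I would first invoke theorem \ref{prisdR} on each side. Let $M' := E'_{A'}$ be the finite projective $A'$-module carrying the topologically quasi-nilpotent twisted connection of level $-1$ attached to $E'$ by theorem \ref{equiv1}, and put $N := (F^*E')_A = A \otimes_{A'} M'$. Theorem \ref{prisdR} then gives
\[
\mathrm R\Gamma((\mathcal X'/R)_{\mathbb\Delta}, E') \simeq \left[M' \to M' \otimes_{A'} \Omega_{A'/R,q^p}\right], \quad \mathrm R\Gamma((\mathcal X/R)_{\mathbb\Delta}, F^*E') \simeq \left[N \to N \otimes_{A} \Omega_{A/R,q^p}\right],
\]
and under these identifications the map induced by frobenius is the morphism of complexes coming from the relative Frobenius $F : A' \to A$, $x' \mapsto x^p$. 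It is $s \mapsto 1 \otimes s$ in degree zero, while in degree one it is the divided Frobenius on differentials, which carries the factor $(p)_q$ exhibited by the frobenius formula $\mathrm d_{q^p}x \mapsto (p)_q x^{p-1}\mathrm d_{q^p}x$ recalled just before the prismatic Poincaré lemma.

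Next I would bring in theorem \ref{lastthm}. Since $M'$ is finite projective, hence complete and formally flat, and its level $-1$ connection is topologically quasi-nilpotent, that theorem provides an \emph{integral} quasi-isomorphism between the left-hand complex above and the level $0$ de Rham complex $[N \to N \otimes_A \Omega_{A/R,q}]$ of the \emph{same} module $N = A \otimes_{A'} M'$; by theorem \ref{qcrdR} the latter is $\mathrm R\Gamma((\mathcal X/R)_{q\mathrm{-CRIS}}, E)$ for the $q$-crystal $E$ corresponding to $N$ under corollary \ref{equiv2} (this is the cohomological Cartier transform of theorem \ref{Liplus}). In this way the corollary is reduced to the assertion that, for the single $A$-module $N$, the level $0$ and level $-1$ de Rham complexes become quasi-isomorphic after inverting $(p)_q$, the comparison being the frobenius of the previous paragraph. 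This is plausible because inverting $(p)_q$ turns $q^p - 1 = (q-1)(p)_q$ into a unit multiple of $q-1$, so that the level $0$ and level $-1$ twists become interchangeable up to the rescaling recorded by the divided Frobenius.

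Finally I would prove this reduced statement by the very mechanism used in the proof of theorem \ref{lastthm}: write out the frobenius comparison between the two complexes, observe that its sole obstruction to bijectivity is multiplication by $(p)_q$ in the degree-one term, and apply the derived Nakayama lemma (proposition \ref{Nak}) to reduce, after inverting $(p)_q$, to a residue-field computation where the statement becomes the classical Frobenius isogeny, equivalently the $(p)_q$-inverted Cartier isomorphism of corollary 8.10 of \cite{GrosLeStumQuiros19}. I expect the main obstacle to be precisely this bookkeeping: one must follow the powers of $(p)_q$ produced by the divided Frobenius on $\Omega_{A/R,q^p}$, verify that each becomes a unit after localisation, and confirm that the resulting isomorphism is compatible with the comparison of the first paragraph, thereby upgrading the integral (and non-invertible) comparison of theorem \ref{lastthm} to a genuine isomorphism once $(p)_q$ is inverted.
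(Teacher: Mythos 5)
Your proposal follows essentially the same route as the paper: both sides are computed via theorem \ref{prisdR}, theorem \ref{lastthm} supplies the integral quasi-isomorphism between the level $-1$ complex of $M'$ over $A'$ and the level $0$ de Rham complex of $M = A \otimes_{A'} M'$, and the remaining comparison is the Verschiebung $\mathrm d_q x \mapsto (p)_q\, \mathrm d_{q^p}x$, which is the identity in degree zero and a unit multiple of $(p)_q$ in degree one, hence a termwise isomorphism once $(p)_q$ is inverted. The only superfluous (and in fact unusable) step is your final appeal to the derived Nakayama lemma: after inverting $(p)_q$ the complexes are no longer derived complete, so proposition \ref{Nak} does not apply --- but it is also not needed, since the Verschiebung observation already closes the argument without any further residue-field or Cartier-isomorphism computation.
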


 \begin{proof}
If we let $M' := E'_{A'}$ and $M := A \otimes_{A'} M'$, then we already know that

\begin{align*}
\mathrm R\Gamma\left(\mathcal X'/R)_{\mathbb\Delta}, E'\right) & \simeq \left[M' \to M' \otimes_{A'} \Omega_{A'/R,q^p}\right]
\\ & \simeq  [M \to M \otimes \Omega_{A/R,q}]
\end{align*}
(with a twisted connection of level $0$).
But we also have $(F^*E')_A = A \otimes_{A'} M' = M$ and therefore
\[
\mathrm R\Gamma\left(\mathcal X/R)_{\mathbb\Delta}, F^*E'\right) = \left[M \to M \otimes_{A} \Omega_{A/R,q^p}\right]
\]
(with a twisted connection of level $1$).
Now, the canonical map
\[
[M \to M \otimes \Omega_{A/R,q}] \to [M \to M \otimes \Omega_{A/R,q^p}]
\]
is the identity on $M$ and the map induced on differentials is the ``Verschiebung''
\[
\Omega_{A/R,q} \longrightarrow \Omega_{A/R,q^p}, \quad \mathrm d_qx \mapsto (p)_{q} \mathrm d_{q^p} x
\]
(see section 1 of \cite{GrosLeStumQuiros21}).
\end{proof}

\begin{rmk}
In the case of the trivial crystal, corollary \ref{imphi} is due to Bhatt and Scholze (see theorem 1.8 (6) in the introduction of \cite{BhattScholze22}).
\end{rmk}

There exists a variant to this corollary:

\begin{cor}
If $E$ is a locally finite free prismatic crystal on $\mathcal X := \mathrm{Spf}(A/(p)_q)$ over $R$, then frobenius induces an isomorphism
\[
R {}_{{}_{\phi}\nwarrow}\!\!\widehat\otimes^\mathrm L_{R} \mathrm R\Gamma\left((\mathcal X/R)_{\mathbb\Delta}, E\right)\left[\frac 1{(p)_{q^p}}\right] \simeq \mathrm R\Gamma\left((\mathcal X/R)_{\mathbb\Delta}, \phi^*E\right)\left[\frac 1{(p)_{q^p}}\right].
\]
\end{cor}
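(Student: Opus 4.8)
The plan is to reduce everything to the twisted de Rham description of Theorem~\ref{prisdR} and to exploit the factorisation of the absolute frobenius through the relative one. Write $A' := R {}_{{}_{\phi}\nwarrow}\!\!\widehat\otimes_{R} A$ as in the previous results, and recall that the absolute frobenius $\phi_A$ factors as $A \xrightarrow{\,w\,} A' \xrightarrow{\,F\,} A$, where $F$ is the relative frobenius and $w$ is the base-change map; on $R$ the map $w$ is $\phi$, so it sends $(p)_q$ to $(p)_{q^p}$ and, more generally, it turns $q^p$-twisted objects into $q^{p^2}$-twisted ones. Since $w^{*}E$ is a prismatic crystal on $\mathcal X'/R$ and $\phi^{*}E = F^{*}(w^{*}E)$, the statement should follow by combining Corollary~\ref{imphi} (which handles the $F$-part) with a base-change identification (which produces the factor $R {}_{{}_{\phi}\nwarrow}\!\!\widehat\otimes_{R}$ and the localisation at $(p)_{q^p}$).

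Concretely, setting $M := E_A$ and $M^{(1)} := R {}_{{}_{\phi}\nwarrow}\!\!\widehat\otimes_{R} M$, I would proceed as follows. First, by Theorem~\ref{prisdR}, $\mathrm R\Gamma((\mathcal X/R)_{\mathbb{\Delta}}, E) \simeq [M \to M \otimes_A \Omega_{A/R,q^p}]$; applying $R {}_{{}_{\phi}\nwarrow}\!\!\widehat\otimes^{\mathrm L}_{R}$ and tracking the twist (which replaces $q^p$ by $q^{p^2}$ and the Leibniz coefficient $(p)_q$ by $(p)_{q^p}$) identifies the left-hand side with the level $-2$ de Rham complex $[M^{(1)} \to M^{(1)} \otimes_{A'} \Omega_{A'/R,q^{p^2}}]$ over $A'$. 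Second, again by Theorem~\ref{prisdR}, $\mathrm R\Gamma((\mathcal X/R)_{\mathbb{\Delta}}, \phi^{*}E) \simeq [N \to N \otimes_A \Omega_{A/R,q^p}]$ with $N := (\phi^{*}E)_A = A \otimes_{A',F} M^{(1)}$, the level $-1$ complex over $A$. The comparison between the two is then exactly the Cartier isomorphism of Theorem~\ref{lastthm}, but applied through $F$ one level lower: the relative frobenius and the divided frobenius $\mathrm d_{q^{p^2}}x' \mapsto x^{p-1}\mathrm d_{q^p}x$ should identify the level $-2$ complex over $A'$ with the level $-1$ complex over $A$ attached to $N$, which is the right-hand side, giving the claimed isomorphism.

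The hard part will be this shifted Cartier comparison together with pinning down the localisation. Theorem~\ref{lastthm} is formulated for a twisted connection of level $-1$, whereas here one needs its exact analogue one notch lower, for the level $-2$ connection on $M^{(1)}$, which the $q$-calculus of the paper (developed only at levels $0$ and $-1$) does not literally cover. I would establish it by the same device as in Theorem~\ref{lastthm}: reduce by the derived Nakayama lemma (Proposition~\ref{Nak}) to the case $(p)_q = 0$ and invoke the classical Cartier isomorphism. The localisation at $(p)_{q^p} = \phi((p)_q)$ is the frobenius twist of the distinguished element $(p)_q$ of the base, and it is precisely what is needed if, instead of the shifted Cartier isomorphism, one prefers to deduce the result from Corollary~\ref{imphi} applied to $w^{*}E$ together with the Verschiebung $\mathrm d_{q^p}x' \mapsto (p)_{q^p}\,\mathrm d_{q^{p^2}}x'$ relating the level $-1$ and level $-2$ complexes over $A'$. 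The delicate point there is the compatibility of localisations on the common term $\mathrm R\Gamma((\mathcal X'/R)_{\mathbb{\Delta}}, w^{*}E)$: Corollary~\ref{imphi} inverts $(p)_q$ while the Verschiebung inverts $(p)_{q^p}$, and since $(p)_{q^p}/(p)_q$ is \emph{not} a unit in $R$, one must check that inverting $(p)_{q^p}$ alone suffices — which is exactly what the shifted Cartier isomorphism, avoiding the $(p)_q$-Verschiebung altogether, guarantees.
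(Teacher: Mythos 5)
Your argument is correct and is essentially the paper's: one factors $\phi^*E=F^*(w^*E)$, identifies the completed derived base change of $\mathrm R\Gamma((\mathcal X/R)_{\mathbb\Delta},E)$ along $\phi$ with the twisted de Rham complex of $E':=w^*E$ over $A'$ via theorem \ref{prisdR}, and concludes by the relative Frobenius comparison. The one economy the paper makes is that no new ``level $-2$'' Cartier isomorphism has to be proved: corollary \ref{imphi} applies verbatim over the re-indexed base $(R,(p)_{q^p})$, i.e.\ with $q$ replaced by $q^p$ throughout, so the localisation it produces is at the new distinguished element $(p)_{q^p}$ --- which is precisely the resolution of the localisation mismatch you flag in your second route, and makes your hand-built shifted Cartier comparison (your first route, which would also work, being the same proof one level down) unnecessary.
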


\begin{proof}
If we pull back $E$ along the frobenius morphism $\phi : (R,(p)_q) \to (R, (p)_{q^p})$, we get some $E'$ and we have

\begin{align*}
R {}_{{}_{\phi}\nwarrow}\!\!\widehat\otimes^\mathrm L_{R} \mathrm R\Gamma\left(\mathcal X/R)_{\mathbb\Delta}, E\right)
&\simeq R {}_{{}_{\phi}\nwarrow}\!\!\widehat\otimes^\mathrm L_{R}  \left[E_{A} \overset \nabla \to E_{A} \otimes_{A} \Omega_{A/R,q^p}\right]
\\ & \simeq \left[ E'_{A'} \overset \nabla \to E'_{A'} \otimes_{A'} \Omega_{A'/R',q^{p^2}}\right]
\\ & \simeq \mathrm R\Gamma\left((\mathcal X'/R)_{\mathbb\Delta}, E'\right),
\end{align*}
and
\[
\mathrm R\Gamma\left((\mathcal X/R)_{\mathbb\Delta}, F^*E'\right) \simeq \mathrm R\Gamma\left((\mathcal X/R)_{\mathbb\Delta}, \phi^*E\right).
\]
We may then apply corollary \ref{imphi}.
\end{proof}

\setcounter{section}{0}
\renewcommand{\theHsection}{\Alph{section}}
\appendix
\section{Appendix}

We gather some results found here and there about complete flatness and derived completion that we use in the core of the article.
We however do not discuss the simplicial approach.
Our aim is mostly to explain how complete flatness and derived completion boil down in practice to the discrete notions of classical completion and formal flatness.

\textbf{Caveat:} In contrast with the rest of the article, we use in this appendix the \emph{hat} notation to denote derived completion (and not classical completion).

We let $A$ be a (not necessarily complete) adic ring.
All $A$-modules are endowed with their adic topology.

At some point, we will assume that $A$ has a finitely generated ideal of definition.

\subsection{Complete flatness} \label{Append1}

When we say that $M^\bullet$ is a \emph{complex of $A$-modules}, we mean that it is an object of the derived category $\mathrm D(A)$ of the category of $A$-modules.
We consider the latter as a full subcategory of $\mathrm D(A)$.

\begin{dfn}
\begin{enumerate}
\item
A complex of $A$-modules $M^\bullet$ is \emph{discrete} if $\mathrm H^{k}(M^\bullet) = 0$ whenever $k \neq 0$.
\item
A complex of $A$-modules $M^\bullet$ is \emph{discrete (faithfully) flat} (over $A$) if it is \emph{discrete} and $\mathrm H^{0}(M^\bullet)$ is (faithfully) flat over $A$.
\end{enumerate}
\end{dfn}

\begin{rmks}
\begin{enumerate}
\item
The inclusion of $A$-modules into complexes of $A$-modules induces an equivalence between $A$-modules (resp.\ (faithfully) flat $A$-modules) and discrete (resp.\ discrete  (faithfully) flat) complexes of $A$-modules.
An inverse is given by the functor $M^\bullet \mapsto \mathrm H^0(M^\bullet)$.
\item Since we will have to move back and forth between $A$-modules and complexes of $A$-modules, we also recall the following:
\begin{enumerate}\item
If $L^\bullet \to M^\bullet \to N^\bullet \to$ is a distinguished triangle and two of the complexes are discrete, then so is the third.
\item A sequence $0 \to L \to M \to N \to 0$ of $A$-modules is exact if and only if the triangle $L \to M \to N \to$ is distinguished.
\end{enumerate}
\item A complex of $A$-modules $M^\bullet$ is discrete (resp.\ discrete  (faithfully) flat) if and only if $M^\bullet \simeq \mathrm H^{0}(M^\bullet)$ (resp.\ $M^\bullet \simeq \mathrm H^{0}(M^\bullet)$ and $\mathrm H^{0}(M^\bullet)$ is  (faithfully) flat).
\item A complex of $A$-modules $M^\bullet$ is discrete flat if and only if for any $A$-module $N$, the complex $M^\bullet \otimes^{\mathrm L}_{A} N$ is discrete and then
\[
M^\bullet \otimes^{\mathrm L}_{A} N \simeq \mathrm H^0(M^\bullet \otimes^{\mathrm L}_{A} N ) \simeq \mathrm H^0(M^\bullet) \otimes_{A} N.
\]
\end{enumerate}
\end{rmks}

\begin{lem} \label{power}
Let $M^\bullet$ be a complex of $A$-modules and $I, J \subset A$ two ideals.
If $M^\bullet \otimes_{A}^{\mathrm L} A/I$ is discrete flat over $A/I$ and $I^{n+1} \subset J$, then $M^\bullet \otimes_{A}^{\mathrm L} A/J$ is discrete flat over $A/J$.
\end{lem}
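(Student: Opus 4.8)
The plan is to reduce the flatness assertion to a pure discreteness assertion via the criterion recalled in the remarks above, namely that a complex $P^\bullet$ of $B$-modules is discrete flat over $B$ if and only if $P^\bullet \otimes^{\mathrm L}_B N$ is discrete for every $B$-module $N$. Applying this with $B = A/J$ and $P^\bullet = M^\bullet \otimes^{\mathrm L}_A A/J$, and using the base-change isomorphism $(M^\bullet \otimes^{\mathrm L}_A A/J) \otimes^{\mathrm L}_{A/J} N \simeq M^\bullet \otimes^{\mathrm L}_A N$ valid for an $A/J$-module $N$, I would reduce the whole lemma to showing that $M^\bullet \otimes^{\mathrm L}_A N$ is discrete for every $A/J$-module $N$.

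The key observation is that the hypothesis $I^{n+1} \subseteq J$ forces every $A/J$-module to be annihilated by $I^{n+1}$, hence to be an $A/I^{n+1}$-module. So it suffices to prove the following claim by induction on $k \geq 1$: for every $A/I^k$-module $N$, the complex $M^\bullet \otimes^{\mathrm L}_A N$ is discrete. The base case $k=1$ is exactly the hypothesis that $M^\bullet \otimes^{\mathrm L}_A A/I$ is discrete flat over $A/I$, read through the same criterion now applied over the ring $A/I$.

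For the inductive step I would take an $A/I^{k+1}$-module $N$ and use the short exact sequence $0 \to IN \to N \to N/IN \to 0$. Here $N/IN$ is an $A/I$-module, so the base case applies to it, while $IN$ satisfies $I^k(IN) = I^{k+1}N = 0$ and is therefore an $A/I^k$-module, covered by the inductive hypothesis. Tensoring the sequence with $M^\bullet$ over $A$ yields a distinguished triangle $M^\bullet \otimes^{\mathrm L}_A IN \to M^\bullet \otimes^{\mathrm L}_A N \to M^\bullet \otimes^{\mathrm L}_A N/IN \to$ whose two outer terms are discrete; by the remark that in a distinguished triangle discreteness of two terms implies it for the third, the middle term is discrete, completing the induction.

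The argument is essentially formal once the right criterion is isolated, and I do not expect a genuine obstacle. The one point requiring care is the bookkeeping: applying the derived base-change identity correctly so that discreteness for all $A/J$-modules really translates back into discrete flatness of $M^\bullet \otimes^{\mathrm L}_A A/J$ over $A/J$, and noticing that $J \subseteq I$ is never needed, only $I^{n+1} \subseteq J$ is used, precisely through the annihilation of $A/J$-modules by $I^{n+1}$.
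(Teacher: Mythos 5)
Your proof is correct and rests on the same key device as the paper's: the short exact sequence $0 \to IN \to N \to N/IN \to 0$, the associated distinguished triangle, and the two-out-of-three property for discreteness, combined with the criterion that discrete flatness is detected by discreteness of $M^\bullet \otimes^{\mathrm L}_A N$ for all relevant $N$. The only difference is organizational: you run a clean induction on the power $k$ over all $A/I^k$-modules, whereas the paper reduces to the two special cases $I=0$ and $J=I^2=0$ and leaves the intervening d\'evissage implicit, so your write-up in fact makes that step more explicit.
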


\begin{proof}
It is sufficient to do the cases $I=0$ and $J=I^2 =0$.
\begin{enumerate}
\item Assume $I=0$.
Then $M^\bullet$ is discrete  (faithfully) flat over $A$ so that
\[
M^\bullet \otimes_{A}^{\mathrm L} A/J \simeq \mathrm H^0(M^\bullet \otimes_{A}^{\mathrm L} A/J) \simeq \mathrm H^{0}(M^\bullet) \otimes_{A} A/J
\]
and we know that scalar extension preserves (faithful) flatness.
\item Assume $J=I^2 =0$.
If $N$ is any $A$-module, then the short exact sequence $0 \mapsto IN \to N \to N/IN \to 0$ provides a distinguished triangle
\[
M^\bullet \otimes_{A}^{\mathrm L} IN \to M^\bullet \otimes_{A}^{\mathrm L} N \to M^\bullet \otimes^{\mathrm L}_{A} N/IN \to.
\]
Of course, $N/IN$ is an $A/I$-module, but since $I^2 = 0$, $IN$ is also an $A/I$-module and we may rewrite our triangle:
\[
(M^\bullet \otimes_{A}^{\mathrm L} A/I) \otimes_{A/I} IN \to M^\bullet \otimes_{A}^{\mathrm L} N \to (M^\bullet \otimes_{A}^{\mathrm L} A/I) \otimes^{\mathrm L}_{A/I} N/IN \to.
\]
Since $M^\bullet \otimes_{A}^{\mathrm L} A/I$ is discrete flat over $A/I$, we see that both sides are discrete and $M^\bullet \otimes_{A}^{\mathrm L} N$ must also be discrete. \qedhere
\end{enumerate}
\end{proof}

\begin{dfn} \label{compfl}
A complex of $A$-modules $M^\bullet$ is \emph{completely (faithfully) flat} if $M^\bullet \otimes_{A}^{\mathrm L} A/I$ is discrete (faithfully) flat over $A/I$ for \emph{some} ideal of definition $I$.
\end{dfn}

\textbf{Warning} : In this definition, the complex $M^\bullet$ itself need not be discrete.


\begin{rmks}
\newcounter{mycount}
\begin{enumerate}
\item 
If a complex of $A$-modules $M^\bullet$ is completely  (faithfully)  flat, then $M^\bullet \otimes_{A}^{\mathrm L} A/I$ is discrete  (faithfully) flat over $A/I$ for \emph{any} ideal of definition $I$.
\item A complex of $A$-modules $M^\bullet$ is discrete  (faithfully) flat if and only if it is completely  (faithfully)  flat for the discrete topology.
\item Complete  (faithful) flatness is inherited by any coarser adic topology (bigger ideal of definition).
\item If a complex of $A$-modules $M^\bullet$ is discrete  (faithfully)  flat, then it is completely  (faithfully)  flat.
\setcounter{mycount}{\theenumi}
\end{enumerate}
We now let $I$ be an ideal of definition for $A$.
\begin{enumerate}
\setcounter{enumi}{\themycount}
\item If $J \subset I$, then $M^\bullet$ is completely  (faithfully) flat if and only if $M^\bullet \otimes^{\mathrm L}_{A} A/J$ is completely  (faithfully)  flat for the $I/J$-adic topology.
\item \label{rmk}
\begin{enumerate}
\item A complex of $A$-modules $M^\bullet$ is completely  (faithfully) flat if and only if $\mathrm H^0(M^\bullet \otimes_{A}^{\mathrm L} A/I)$ is  (faithfully)  flat over $A/I$ and $\mathrm H^k(M^\bullet \otimes_{A}^{\mathrm L} A/I) = 0$ for $k \neq 0$.
\item \label{rmkplus}An $A$-module $M$ is completely flat if and only if $M/IM$ is flat over $A/I$ and $M \otimes^{\mathrm L}_{A} A/I = M \otimes_{A} A/I$ (or equivalently $\mathrm{Tor}_{k}^A(M, A/I) = 0$ for $k \neq 0$).
\end{enumerate}
\item
\begin{enumerate}
\item If a complex of $A$-modules $M^\bullet$ is completely flat, then
\[
M^\bullet \otimes_{A}^{\mathrm L} A/I \simeq \mathrm H^0(M^\bullet \otimes_{A}^{\mathrm L} A/I).
\]
\item 
If an $A$-module $M$ is completely flat, then
\[
M \otimes_{A}^{\mathrm L} A/I \simeq M/I M.
\]
\end{enumerate}
\item
\begin{enumerate}
\item A complex of $A$-modules $M^\bullet$ is completely flat if and only if for all $A/I$-module $N$, the complex $M^\bullet \otimes^{\mathrm L}_{A} N$ is discrete.
\item An $A$-module $M$ is completely flat if and only if for all $A/I$-module $N$, $M \otimes^{\mathrm L}_{A} N = M \otimes_{A} N$ (i.e. $\mathrm{Tor}_{k}^A(M,N) = 0$ for $k \neq 0$).
\end{enumerate}
\end{enumerate}
\end{rmks}

\begin{dfn}
An adic morphism of adic rings $u : A \to B$ is said to be \emph{completely (faithfully) flat} if $B$ is completely (faithfully) flat as an $A$-module.
\end{dfn}

\begin{rmks}
Let $u : A \to B$ is an adic morphism of adic rings.
\begin{enumerate}
\item
If $M^\bullet$ is a completely (faithfully) flat complex of $A$-modules, then $B \otimes^{\mathrm L}_A M^\bullet$ is a completely (faithfully) flat complex of $B$-modules.
\item 
Conversely, if $u$ is completely faithfully flat and $B \otimes^{\mathrm L}_A M^\bullet$ is a completely (faithfully) flat, then $M^\bullet$ was already completely (faithfully) flat.
\end{enumerate}
\end{rmks}

\begin{dfn} \label{formflat}
\begin{enumerate}
\item
An $A$-module $M$ is \emph{formally (faithfully) flat}\footnote{Also called \emph{adically flat}.} if $M/IM$ is (faithfully) flat over $A/I$ for \emph{any} ideal of definition $I$.
\item
A complex of $A$-modules $M^\bullet$ is \emph{formally (faithfully) flat} if it is \emph{discrete} and $\mathrm H^0(M^\bullet)$ is formally (faithfully) flat.
\end{enumerate}
\end{dfn}

\begin{rmks}
\begin{enumerate}
\item
If $I$ is an ideal of definition of $A$, then an $A$-module $M$ is formally (faithfully) flat if and only if $M/I^{n+1}M$ is (faithfully) flat over $A/I^{n+1}$ for all $n \in \mathbb N$.
\item 
It is clearly not sufficient that $M/IM$ is (faithfully) flat over $A/I$ for \emph{some} ideal of definition $I$.
\item
A (faithfully) flat $A$-module $M$ is automatically completely (faithfully) flat and a completely (faithfully) flat $A$-module $M$ is automatically formally (faithfully) flat.
\end{enumerate}
\end{rmks}

\subsection{Derived completeness} \label{Append2}

In order to go further, we need to recall some results about derived completeness (we refer the reader to \cite[\href{https://stacks.math.columbia.edu/tag/091N}{Tag 091N} 
]{stacks-project} for the details).

\begin{dfn} \label{defcomp}
\begin{enumerate}
\item
An $A$-module $M$ is \emph{classically complete}\footnote{Also called \emph{adically complete.}} if
\[
M \simeq \varprojlim_{n \in \mathbb N} M/I^{n}
\]
for some (or any) ideal of definition.
\item
A complex of $A$-modules $M^\bullet$ is \emph{classically complete} if it is \emph{discrete} and $\mathrm H^0(M^\bullet)$ is classically complete.
\item
A complex of $A$-modules $M^\bullet$ is \emph{derived complete} if

\begin{align} \label{derc}
\forall f \in I, \quad \mathrm R\varprojlim_{f} M^\bullet = 0
\end{align}
for \emph{some} ideal of definition $I$ of $A$.
\end{enumerate}
\end{dfn}

\begin{rmks}
\begin{enumerate}
\item If a complex of $A$-modules $M^\bullet$ is derived complete, then \eqref{derc} holds for any ideal of definition $I$ of $A$ (and even for $\sqrt{I}$).
\item Derived completeness may be checked on generators of $I$.
\item Derived completeness is inherited by any finer adic topology (smaller ideal of definition).
\item If a complex of $A$-modules $M^\bullet$ is classically complete, then $M^\bullet$ is automatically derived complete.
\end{enumerate}
\end{rmks}

Derived completeness is a remarkably stable notion:

\begin{prop} \label{stb}
\begin{enumerate}
\item Derived complete complexes of $A$-modules form a saturated\footnote{That is, stable under direct factor.} full triangulated subcategory of the category of all complexes of $A$-modules 
\item A complex of $A$-modules is derived complete if and only if it has derived complete cohomology.
\item Derived complete $A$-modules form a full abelian subcategory of the category of all $A$-modules which is stable under kernels, cokernels and extensions (weak Serre subcategory).
\end{enumerate}
\end{prop}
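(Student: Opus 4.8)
The plan is to reduce all three statements to a single reformulation of derived completeness for one element. Fix $f \in A$ and write $A_f := A[1/f]$. Presenting $A_f$ as the filtered colimit $\mathrm{colim}(A \xrightarrow{f} A \xrightarrow{f} \cdots)$ and using that $\mathrm{RHom}_A(-,M^\bullet)$ converts this sequential colimit into the derived inverse limit of the tower $\cdots \xrightarrow{f} M^\bullet \xrightarrow{f} M^\bullet$, one gets a natural identification $\mathrm{RHom}_A(A_f, M^\bullet) \simeq \mathrm R\varprojlim_{f} M^\bullet$. Since derived completeness may be tested on a finite generating set $f_1, \ldots, f_r$ of an ideal of definition $I$, a complex $M^\bullet$ is derived complete if and only if $\mathrm{RHom}_A(A_{f_i}, M^\bullet) = 0$ for every $i$. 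The one genuinely useful input is that $A_f$ has projective dimension $\le 1$: the telescope construction provides a free resolution of length one,
\[
0 \to \bigoplus_{\mathbb N} A \longrightarrow \bigoplus_{\mathbb N} A \longrightarrow A_f \to 0.
\]

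Granting this, part (1) is formal. For each $i$ the functor $\mathrm{RHom}_A(A_{f_i}, -)$ is additive and triangulated, so its kernel is a full triangulated subcategory, and it is stable under direct factors because a retract of the zero object is zero. The category of derived complete complexes is the finite intersection of these kernels, hence saturated and triangulated.

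For part (2), the bound on $\mathrm{pd}(A_f)$ means $\mathrm{RHom}_A(A_f, M^\bullet)$ is assembled from the cohomology of $M^\bullet$ through universal-coefficient short exact sequences, one for each $n$:
\[
0 \to \mathrm{Ext}^1_A(A_f, \mathrm H^{n-1}(M^\bullet)) \to \mathrm H^n\mathrm{RHom}_A(A_f, M^\bullet) \to \mathrm{Hom}_A(A_f, \mathrm H^n(M^\bullet)) \to 0.
\]
Because the resolving complex has only two terms, there is no convergence issue even when $M^\bullet$ is unbounded, and this is exactly the point where the projective dimension bound does the work. Now $\mathrm{Hom}_A(A_f, N) = \varprojlim_f N$ and $\mathrm{Ext}^1_A(A_f, N) = \mathrm R^1\varprojlim_f N$, so a module $N$ is derived $f$-complete precisely when both vanish. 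Thus if every $\mathrm H^k(M^\bullet)$ is derived complete, all outer terms vanish and $M^\bullet$ is derived complete; conversely, vanishing of all the middle terms forces each $\mathrm{Hom}$ and $\mathrm{Ext}^1$ term to vanish, so every $\mathrm H^k(M^\bullet)$ is derived $f_i$-complete for each $i$.

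Finally, part (3) follows from (1) and (2). Fullness and the ambient module structure are clear, so it suffices to verify closure under kernels, cokernels and extensions. Given a morphism $u : M \to N$ of derived complete modules, its cone $C$ in $\mathrm D(A)$ is derived complete by (1); as $\mathrm H^{-1}(C) = \ker u$ and $\mathrm H^0(C) = \mathrm{coker}\, u$ are its only nonzero cohomology groups, part (2) makes both derived complete. For an extension $0 \to M' \to M \to M'' \to 0$ with $M', M''$ derived complete, the associated distinguished triangle has two derived complete vertices, so $M$ is derived complete by (1). I expect the only real subtlety to be the passage between an unbounded complex and its cohomology, and this is precisely what the length-one resolution of $A_f$ tames.
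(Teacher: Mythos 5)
Your proof is correct. The paper offers no argument of its own here --- it simply cites \cite[\href{https://stacks.math.columbia.edu/tag/091U}{Tag 091U}]{stacks-project} --- and your route, namely identifying $\mathrm R\varprojlim_{f} M^\bullet$ with $\mathrm{RHom}_A(A_f, M^\bullet)$, exploiting the two-term telescope resolution of $A_f$ to get the universal-coefficient short exact sequences, and then deducing (1) as an intersection of kernels of triangulated functors and (3) from (1) and (2), is exactly the argument given in that reference, so there is nothing substantive to compare.
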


\begin{proof}
\cite[\href{https://stacks.math.columbia.edu/tag/091U}{Tag 091U}]{stacks-project}
\end{proof}


We assume from now on that $A$ has a finitely generated ideal of definition.

Let us first mention the important \emph{Derived Nakayama lemma}:

\begin{prop} \label{Nak}
If $M^\bullet$ is derived complete, then
\[
M^\bullet = 0 \Leftrightarrow M^\bullet \otimes_{A}^{\mathrm L} A/I = 0.
\]
\end{prop}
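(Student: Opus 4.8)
The forward implication is trivial, so the whole content is the converse: assuming $M^\bullet$ is derived complete and $M^\bullet \otimes^{\mathrm L}_A A/I = 0$, I must deduce $M^\bullet = 0$. Since $A$ is assumed to have a finitely generated ideal of definition, I would fix generators $I = (f_1, \ldots, f_r)$ and write $\mathrm{Kos}(f_1, \ldots, f_r) := [A \overset{f_1}\to A] \otimes^{\mathrm L}_A \cdots \otimes^{\mathrm L}_A [A \overset{f_r}\to A]$ for the Koszul complex. The plan rests on two moves: first replacing the quotient $A/I$ by this Koszul complex, and then stripping off the generators one at a time. The main obstacle is precisely the first move, since one cannot induct on $A/I$ directly (the quotient by $(f_1,\dots,f_r)$ is not the derived tensor product of the $A/(f_i)$), so the passage to the Koszul complex is the key structural step.

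For that first step, I would use the base-change identity $M^\bullet \otimes^{\mathrm L}_A N \simeq (M^\bullet \otimes^{\mathrm L}_A A/I) \otimes^{\mathrm L}_{A/I} N$, valid for every $N \in \mathrm D(A/I)$ because $A/I \otimes^{\mathrm L}_{A/I} N = N$. Thus the hypothesis forces $M^\bullet \otimes^{\mathrm L}_A N = 0$ for \emph{every} complex $N$ of $A/I$-modules. Now $\mathrm{Kos}(f_1, \ldots, f_r)$ is bounded and each of its cohomology modules $\mathrm H^i$ is annihilated by $I$, hence is an $A/I$-module. Filtering the Koszul complex by its canonical truncations yields finitely many distinguished triangles whose graded pieces are the shifts $\mathrm H^i[-i]$; tensoring with $M^\bullet$ and applying the vanishing just obtained together with two-out-of-three in triangles, I conclude that $M^\bullet \otimes^{\mathrm L}_A \mathrm{Kos}(f_1, \ldots, f_r) = 0$.

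The elementary single-variable observation is that if a complex $N^\bullet$ is derived $f$-complete and $f$ acts invertibly on it, then $N^\bullet = 0$: the tower defining $\mathrm R\varprojlim_f N^\bullet$ then has isomorphisms for transition maps, so $\mathrm R\varprojlim_f N^\bullet \simeq N^\bullet$, which vanishes by derived completeness (Definition \ref{defcomp}).

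Finally I would run an induction on $r$, proving that derived $(f_1, \ldots, f_r)$-completeness of $M^\bullet$ together with $M^\bullet \otimes^{\mathrm L}_A \mathrm{Kos}(f_1, \ldots, f_r) = 0$ implies $M^\bullet = 0$, the case $r=0$ being trivial since then $\mathrm{Kos}() = A$. For the inductive step set $N^\bullet := M^\bullet \otimes^{\mathrm L}_A \mathrm{Kos}(f_1, \ldots, f_{r-1})$, so that $M^\bullet \otimes^{\mathrm L}_A \mathrm{Kos}(f_1, \ldots, f_r)$ is, up to a shift, $\mathrm{cone}(f_r : N^\bullet \to N^\bullet)$; its vanishing means $f_r$ acts invertibly on $N^\bullet$. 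Moreover $N^\bullet$ is still derived $f_r$-complete: $M^\bullet$ is derived $f_r$-complete (derived completeness may be checked on generators, by the remarks after Definition \ref{defcomp}), and $N^\bullet$ is built from $M^\bullet$ by finitely many cones against the two-term complexes $[A \overset{f_i}\to A]$, an operation that stays inside the triangulated subcategory of derived $f_r$-complete complexes by Proposition \ref{stb}. The single-variable observation then gives $N^\bullet = 0$, i.e. $M^\bullet \otimes^{\mathrm L}_A \mathrm{Kos}(f_1, \ldots, f_{r-1}) = 0$; since $M^\bullet$ is also derived $(f_1, \ldots, f_{r-1})$-complete, the inductive hypothesis yields $M^\bullet = 0$. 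Combining this induction with the reduction of the first step completes the argument.
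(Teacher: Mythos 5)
Your argument is correct, and it fills a gap the paper leaves open: the paper does not actually prove this proposition, it only cites the Stacks Project (Tag 0G1U), so there is no internal argument to match. Your proof is essentially the standard one, and its crucial step coincides with the cited reference: since the cohomology modules of $\mathrm{Kos}(A, f_1, \ldots, f_r)$ are annihilated by $I$, the hypothesis $M^\bullet \otimes^{\mathrm L}_A A/I = 0$ propagates, via base change along $A \to A/I$ and the canonical filtration, to $M^\bullet \otimes^{\mathrm L}_A \mathrm{Kos}(A, f_1, \ldots, f_r) = 0$. Where you diverge is in the endgame. The reference concludes in one stroke from the identity $M^\bullet \simeq \mathrm R\varprojlim\bigl(M^\bullet \otimes^{\mathrm L}_A \mathrm{Kos}(A, f_1^n, \ldots, f_r^n)\bigr)$ for derived complete $M^\bullet$ --- the formula recorded in the paper in the remark following proposition \ref{adder} --- since each term of that tower vanishes by the same torsion argument applied to $(f_1^n,\ldots,f_r^n)$. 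You instead peel off the generators one at a time, using that a derived $f$-complete complex on which $f$ acts invertibly is zero. Your induction is sound: the two points requiring care are that derived $f_r$-completeness passes to cones (immediate from exactness of $\mathrm R\varprojlim$ on towers, or from proposition \ref{stb} applied to the $f_r$-adic topology) and that the inductive statement must be phrased for an arbitrary finite sequence of elements rather than for an ideal of definition of $A$; you handle both correctly. Your route is marginally longer but avoids invoking the $\mathrm R\varprojlim$ description of derived completion, so it is, if anything, the more elementary of the two.
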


\begin{proof}
\cite[\href{https://stacks.math.columbia.edu/tag/0G1U}{Tag 0G1U}]{stacks-project}.
\end{proof}

\begin{prop} \label{adder}
The inclusion of the category of derived complete complexes of $A$-modules into the category of all complexes of $A$-modules has a left adjoint $M^\bullet \mapsto \widehat {M^\bullet}$.
\end{prop}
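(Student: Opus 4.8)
The plan is to construct the candidate completion functor by hand from a finitely generated ideal of definition and then verify the two defining properties of a reflection: that the output is derived complete, and that the unit map is an equivalence against every derived complete target. Throughout I fix a finitely generated ideal of definition $I=(f_1,\dots,f_r)$, available by our standing hypothesis on $A$. Since, as noted after definition \ref{defcomp}, derived completeness may be checked on the generators $f_1,\dots,f_r$ separately, and the corresponding single-element completions commute with one another, it suffices to build the functor for one $f=f_i$ at a time and then iterate. For a single $f$ the natural candidate is the homotopy limit
\[
\widehat{M^\bullet} := \mathrm R\varprojlim_n \left( M^\bullet \otimes^{\mathrm L}_A A/(f^n) \right),
\]
equipped with the unit map $\eta : M^\bullet \to \widehat{M^\bullet}$ coming from the projections $M^\bullet \to M^\bullet \otimes^{\mathrm L}_A A/(f^n)$.

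Next I would check that $\widehat{M^\bullet}$ is itself derived complete. Each term $M^\bullet \otimes^{\mathrm L}_A A/(f^n)$ is annihilated by $f^n$ and is therefore trivially derived complete. Arbitrary products of derived complete complexes are derived complete (immediate from the definition, since $\mathrm R\varprojlim_f$ commutes with products), and $\mathrm R\varprojlim$ is assembled as the fibre of a map between two such products; combining this with the fact that derived complete complexes form a triangulated subcategory (proposition \ref{stb}(1)) shows that $\widehat{M^\bullet}$ lies in the subcategory as well.

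It then remains to establish the universal property: for every derived complete $N^\bullet$, the map $\eta$ should induce an isomorphism $\mathrm R\mathrm{Hom}_A(\widehat{M^\bullet}, N^\bullet) \xrightarrow{\ \sim\ } \mathrm R\mathrm{Hom}_A(M^\bullet, N^\bullet)$. Writing $C$ for the cofibre of $\eta$ and using the triangle $M^\bullet \to \widehat{M^\bullet}\to C\to$, this reduces to the vanishing $\mathrm R\mathrm{Hom}_A(C,N^\bullet)=0$. The crucial input is that $C$ is \emph{$I$-locally acyclic}, i.e.\ $C\otimes^{\mathrm L}_A A/I = 0$; equivalently, $\eta$ becomes an isomorphism after $-\otimes^{\mathrm L}_A A/I$, reflecting the fact that completion changes nothing modulo $I$. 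Granting this, one invokes the orthogonality between $I$-locally acyclic complexes and derived complete complexes: $\mathrm R\mathrm{Hom}_A(C,N^\bullet)$ is again derived complete (a limit of derived complete objects), so by the derived Nakayama lemma (proposition \ref{Nak}) its vanishing can be tested after $-\otimes^{\mathrm L}_A A/I$, where the hypothesis $C\otimes^{\mathrm L}_A A/I=0$ takes over.

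I expect this orthogonality to be the main obstacle. The delicate point is to make precise that the $I$-locally acyclic complexes and the derived complete complexes are exactly the two halves of an orthogonal decomposition of $\mathrm D(A)$, and that $\eta$ realizes $\widehat{M^\bullet}$ as the projection onto the complete half; in particular one must verify cleanly that the cofibre $C$ satisfies $C\otimes^{\mathrm L}_A A/I=0$ rather than merely modulo $f$. All of this is carried out in the Stacks project's treatment of derived completion, of which the present statement is the formal adjunction; see \cite[\href{https://stacks.math.columbia.edu/tag/091N}{Tag 091N}]{stacks-project}.
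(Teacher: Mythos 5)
There is a genuine gap, and it sits exactly where you predicted it would. Your candidate functor is $\widehat{M^\bullet} := \mathrm R\varprojlim_n \bigl( M^\bullet \otimes^{\mathrm L}_A A/(f^n) \bigr)$, but this tower does \emph{not} compute the left adjoint for a general finitely generated ideal of definition: the identity $\widehat{M^\bullet} \simeq \mathrm R\varprojlim_n (M^\bullet \otimes^{\mathrm L}_A A/I^n)$ requires the generating sequence to be weakly proregular (e.g.\ $A$ Noetherian, or the bounded-torsion hypotheses of sections \ref{Append3}--\ref{Append4}), and no such hypothesis is in force at this point of the appendix --- only that $A$ has a finitely generated ideal of definition. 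The correct tower is the Koszul one, $\mathrm R\varprojlim_n \bigl(M^\bullet \otimes^{\mathrm L}_A \mathrm{Kos}(A, f_1^n,\dots,f_r^n)\bigr)$, i.e.\ $\mathrm R\varprojlim_n [M^\bullet \overset{f^n}\to M^\bullet]$ in the one-variable case; this is precisely the formula the paper records in the remark immediately following the proposition, and the pro-isomorphism between the two towers is exactly what lemma \ref{infbd} later has to work to establish under bounded-torsion assumptions. With the Koszul tower the ``crucial input'' you isolate becomes transparent: the cofibre of the unit is $\mathrm R\mathrm{Hom}_A(A_f, M^\bullet)[1]$, a complex of $A_f$-modules on which $f$ acts invertibly, whence $\mathrm R\mathrm{Hom}_A(C, N^\bullet) = \mathrm R\mathrm{Hom}_{A_f}(C, \mathrm R\mathrm{Hom}_A(A_f, N^\bullet)) = 0$ because $N^\bullet$ derived complete means exactly $\mathrm R\mathrm{Hom}_A(A_f, N^\bullet)=0$. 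With your tower, by contrast, the cofibre need not be locally acyclic, and the step you flag as ``the main obstacle'' genuinely fails. A second, smaller slip: even granting $C \otimes^{\mathrm L}_A A/I = 0$, the deduction ``$\mathrm R\mathrm{Hom}_A(C,N^\bullet)$ is derived complete, so test it modulo $I$'' does not go through as stated, since $\mathrm R\mathrm{Hom}_A(C,N^\bullet)\otimes^{\mathrm L}_A A/I$ is not $\mathrm R\mathrm{Hom}_A(C\otimes^{\mathrm L}_A A/I, N^\bullet)$; the orthogonality has to be run through $A_f$ as above.

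For what it is worth, the paper's own ``proof'' is a bare citation of \cite[\href{https://stacks.math.columbia.edu/tag/0920}{Tag 0920}]{stacks-project}, and your overall strategy (build a candidate, check it is derived complete, show the cofibre of the unit is left-orthogonal to the derived complete objects) is the strategy of that reference. Had you simply cited it, you would match the paper; having committed to an explicit construction, you committed to the wrong one.
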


\begin{proof}
\cite[\href{https://stacks.math.columbia.edu/tag/0920}{Tag 0920}]{stacks-project}
\end{proof}

The complex $\widehat{M^\bullet}$ is then called the \emph{derived completion} of $M^\bullet$.

\textbf{Warning}: If $M$ is an $A$-module, its derived completion may not be discrete, and even when it is discrete, it may differ from (classical) adic completion, as the following remarks show.

\begin{rmks}
Assume $(f_{1}, \ldots, f_{r})$ is an ideal of definition for $A$ and denote as above by $\mathrm{Kos}$ the corresponding Koszul complex (see \cite[\href{https://stacks.math.columbia.edu/tag/0623}{Tag 0623}]{stacks-project}).
Then,
\begin{enumerate}
\item If $M^\bullet$ is a complex of $A$-modules , we have
\[
\widehat {M^\bullet} = \mathrm R\varprojlim (M^\bullet \otimes^{\mathrm L}_{A} \mathrm{Kos}(A, f^n_{1}, \ldots, f^n_{r})).
\]
Moreover, $M^\bullet$ is derived complete if and only if $M^\bullet = \widehat M^\bullet$.
\item The \emph{derived completion} of an $A$-module $M$ will be
\[
\widehat {M} = \mathrm R\varprojlim \mathrm{Kos}(M, f^n_{1}, \ldots, f^n_{r}) \neq \mathrm \varprojlim M/(f^n_{1}, \ldots, f^n_{r}) 
\]
in general.
However, if $\widehat M$ is discrete and classically complete, then $\widehat M$ is identical to the classical adic completion of $M$.
\end{enumerate}
\end{rmks}

\begin{prop} \label{flatder}
\begin{enumerate}
\item If $M^\bullet$ is any complex of $A$-modules and $I$ denotes an ideal of definition for $A$, then
\[
M^\bullet \otimes_{A}^{\mathrm L} A/I \simeq \widehat{M^\bullet} \otimes_{A}^{\mathrm L} A/I.
\]
\item A complex of $A$-modules $M^\bullet$ is completely flat if and only if $\widehat {M^\bullet}$ is completely flat.
\end{enumerate}
\end{prop}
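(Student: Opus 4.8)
The plan is to deduce the second assertion from the first, which carries all the content. By Definition \ref{compfl}, whether a complex $N^\bullet$ is completely flat depends only on the object $N^\bullet \otimes^{\mathrm L}_A A/I \in \mathrm D(A/I)$ for a fixed ideal of definition $I$ (namely, that it be discrete and flat over $A/I$). Once the first assertion gives an isomorphism $M^\bullet \otimes^{\mathrm L}_A A/I \simeq \widehat{M^\bullet} \otimes^{\mathrm L}_A A/I$ in $\mathrm D(A/I)$, the two reductions agree, so $M^\bullet$ is completely flat if and only if $\widehat{M^\bullet}$ is, and no separate argument for the second part is needed (the same remark would handle a faithfully flat variant).

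For the first assertion I would consider the unit map $\eta : M^\bullet \to \widehat{M^\bullet}$ of the adjunction of proposition \ref{adder} and let $c$ be its cone, so that it suffices to prove $c \otimes^{\mathrm L}_A A/I = 0$. Because $\widehat{(\cdot)}$ is the reflection onto the triangulated subcategory of derived complete complexes, $c$ is ``acyclic'' for this localization; that is, it lies in the localizing subcategory of $\mathrm D(A)$ generated by the localizations $A_{f_i}$, where $I = (f_1, \ldots, f_r)$ is a finite generating set (using the characterization of derived completeness through the vanishing of $\mathrm R\mathrm{Hom}_A(A_{f_i}, -)$, which requires $I$ finitely generated). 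Since $-\otimes^{\mathrm L}_A A/I$ is a triangulated functor commuting with direct sums, it is then enough to check that it annihilates each generator, and indeed $A_{f_i} \otimes^{\mathrm L}_A A/I = (A/I)[1/\overline{f_i}] = 0$ because $\overline{f_i} = 0$ in $A/I$. Hence $c \otimes^{\mathrm L}_A A/I = 0$, and $\eta$ becomes an isomorphism after $-\otimes^{\mathrm L}_A A/I$, as desired.

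Alternatively, and more in line with the citational spirit of this appendix, the first assertion is precisely the standard fact that the derived completion map is an isomorphism modulo $I$ (for $I$ finitely generated), for which one may simply quote \cite{stacks-project}, or read it off the Koszul presentation $\widehat{M^\bullet} = \mathrm R\varprojlim_n (M^\bullet \otimes^{\mathrm L}_A \mathrm{Kos}(A, f_1^n, \ldots, f_r^n))$ recalled above. The step I expect to be the real obstacle is exactly the justification that the fibre of the completion map is killed by $-\otimes^{\mathrm L}_A A/I$: this is where finite generation of $I$ enters, through the description of derived completion as a Bousfield localization whose acyclic objects are built from the $A_{f_i}$. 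Everything else is formal.
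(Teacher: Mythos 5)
Your argument is correct, and for part (1) it takes a genuinely different route from the paper. The paper's proof is a two-line universal property argument: it observes that both $M^\bullet \otimes^{\mathrm L}_A A/I$ and $\widehat{M^\bullet} \otimes^{\mathrm L}_A A/I$ are derived complete (their cohomology is killed by $I$, so proposition \ref{stb} applies) and then invokes the adjunction of proposition \ref{adder} to conclude that the two objects corepresent the same functor on derived complete complexes, hence coincide. You instead work with the fibre of the unit map $M^\bullet \to \widehat{M^\bullet}$ and identify it as lying in the localizing subcategory built from the localizations $A_{f_{i_0}\cdots f_{i_p}}$ (the Greenlees--May/Bousfield picture), each of which is annihilated by $-\otimes^{\mathrm L}_A A/I$ since the $\overline{f_i}$ vanish in $A/I$; strictly the \v Cech--telescope construction produces $\mathrm R\mathrm{Hom}_A(A_{f_{i_0}\cdots f_{i_p}}, M^\bullet)$ rather than the $A_{f_i}$ themselves, but these are complexes of $A_{f_{i_0}}$-modules and the same vanishing applies, so the point stands. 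The paper's route is shorter and stays entirely on the ``complete'' side of the localization; yours makes explicit \emph{why} the statement holds, namely that the acyclics of the completion functor are supported away from $V(I)$, and it isolates precisely where finite generation of $I$ is used. Your reduction of part (2) to part (1) is identical to the paper's.
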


\begin{proof}
The second assertion is a consequence of the first one.
The first assertion results from the fact that both $M^\bullet \otimes_{A}^{\mathrm L} A/I$ and $\widehat{M^\bullet} \otimes_{A}^{\mathrm L} A/I$ are obviously derived complete and therefore share the same universal property.
\end{proof}

We will also write
\[
M^{\bullet,\wedge } := \widehat M^\bullet \quad \mathrm{and} \quad M^\bullet \widehat \otimes_{A}^{\mathrm L} N^\bullet := (M^\bullet \otimes_{A}^{\mathrm L} N^\bullet)^\wedge,
\]
as well as, if $M$ and $N$ are two $A$-modules,
\[
M \widehat \otimes_{A} N := \mathrm H^0(M \widehat \otimes^{\mathrm L}_{A} N).
\]
Note that $M \widehat \otimes_{A} N$ is \emph{not} the derived completion (which may not be discrete) of $M  \otimes_{A} N$ \emph{nor} its classical completion (because it may not be separated).

\begin{lem} \label{tenschek}
If $M$ and $N$ are two $A$-modules, then
\[
M \widehat \otimes_{A} N \simeq \mathrm H^0((M \otimes_{A} N)^\wedge).
\]
\end{lem}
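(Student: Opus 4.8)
The plan is to reduce the statement to the \emph{right $t$-exactness} of derived completion. Write $C^{\bullet} := M \otimes^{\mathrm L}_{A} N$; its cohomology is concentrated in nonpositive degrees (with $\mathrm H^{-i}(C^{\bullet}) = \mathrm{Tor}_{i}^{A}(M,N)$), and $\mathrm H^{0}(C^{\bullet}) = M \otimes_{A} N$. Thus there is a truncation triangle
\[
\tau^{\leq -1}C^{\bullet} \to C^{\bullet} \to M \otimes_{A} N \to \tau^{\leq -1}C^{\bullet}[1]
\]
in which $\tau^{\leq -1}C^{\bullet}$ lies in $\mathrm D^{\leq -1}(A)$. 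By proposition \ref{adder}, together with proposition \ref{stb}, derived completion is the left adjoint of the inclusion of a triangulated (saturated) subcategory, hence is itself a triangulated functor; applying it to the triangle above yields a distinguished triangle $\widehat{\tau^{\leq -1}C^{\bullet}} \to \widehat{C^{\bullet}} \to \widehat{M \otimes_{A} N} \to$.

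Now $M \widehat\otimes_{A} N = \mathrm H^{0}(\widehat{C^{\bullet}})$ by definition, while the right-hand side of the lemma is $\mathrm H^{0}(\widehat{M \otimes_{A} N})$. The long exact cohomology sequence of the triangle therefore reduces the claim to the two vanishings $\mathrm H^{0}(\widehat{\tau^{\leq -1}C^{\bullet}}) = 0$ (which gives injectivity of $\mathrm H^{0}(\widehat{C^{\bullet}}) \to \mathrm H^{0}(\widehat{M \otimes_{A} N})$) and $\mathrm H^{1}(\widehat{\tau^{\leq -1}C^{\bullet}}) = 0$ (which gives surjectivity). Both follow immediately from the key input that derived completion is right $t$-exact, i.e. that it sends $\mathrm D^{\leq n}(A)$ into $\mathrm D^{\leq n}(A)$: since $\tau^{\leq -1}C^{\bullet} \in \mathrm D^{\leq -1}(A)$, its completion has no cohomology in degrees $\geq 0$.

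It remains to establish right $t$-exactness, and this is the heart of the matter. Since the derived $I$-completion for $I = (f_{1},\ldots,f_{r})$ is the iterate of the derived completions along the single elements $f_{i}$, I would reduce to one element $f$ and use the fiber sequence
\[
\mathrm R\mathrm{Hom}_{A}(A[1/f], M^{\bullet}) \to M^{\bullet} \to L\Lambda_{f}(M^{\bullet}) \to
\]
expressing the derived $f$-completion $L\Lambda_{f}$. Writing $A[1/f] = \varinjlim (A \xrightarrow{f} A \xrightarrow{f} \cdots)$, the first term becomes $\mathrm R\varprojlim (\cdots \xrightarrow{f} M^{\bullet} \xrightarrow{f} M^{\bullet})$, a homotopy limit of a tower of copies of $M^{\bullet}$. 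Its cohomology is controlled by the $\varprojlim$--$\mathrm R^{1}\varprojlim$ short exact sequences attached to the towers $\{\mathrm H^{k}(M^{\bullet})\}$ with transition maps multiplication by $f$; a direct inspection shows this term lies in $\mathrm D^{\leq n+1}(A)$ when $M^{\bullet} \in \mathrm D^{\leq n}(A)$, and then the long exact sequence of the fiber triangle shows that its degree-$(n+1)$ cohomology is annihilated, whence $L\Lambda_{f}(M^{\bullet}) \in \mathrm D^{\leq n}(A)$.

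The hard part is exactly this last cancellation. The naive computation of $\widehat{M^{\bullet}}$ as $\mathrm R\varprojlim_{n}\big(M^{\bullet} \otimes^{\mathrm L}_{A} \mathrm{Kos}(A, f_{1}^{n},\ldots,f_{r}^{n})\big)$ appears to lose one cohomological degree through $\mathrm R^{1}\varprojlim$, so one must verify that the would-be top-degree contribution vanishes; the fiber-sequence presentation above is precisely what makes this transparent, since the offending class maps into $\mathrm H^{n+1}(M^{\bullet}) = 0$. Once right $t$-exactness is secured, everything else is the formal diagram chase of the first two paragraphs.
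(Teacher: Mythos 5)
Your argument is correct and turns on the same key input as the paper's proof, namely the right $t$-exactness of derived completion: the paper imports this from the Stacks project (Tag 0AAJ) and concludes via the spectral sequence $E_2^{i,j}=\mathrm H^i(\mathrm H^j(M^\bullet)^\wedge)\Rightarrow \mathrm H^{i+j}(M^{\bullet,\wedge})$ (Tag 0BKE), whereas you perform the formally equivalent d\'evissage with a truncation triangle and prove the $t$-exactness yourself from the fiber sequence for $L\Lambda_f$. The only imprecision is the claim that the degree-$(n+1)$ cohomology of $\mathrm R\mathrm{Hom}_A(A[1/f],M^\bullet)$ gets ``annihilated'': that $\mathrm R^1\varprojlim$ class is not killed but merely shifted into degree $n$ of the completion, and $\mathrm H^{n+1}(L\Lambda_f M^\bullet)$ vanishes simply because it sits between $\mathrm H^{n+1}(M^\bullet)=0$ and $\mathrm H^{n+2}\bigl(\mathrm R\mathrm{Hom}_A(A[1/f],M^\bullet)\bigr)=0$ in the long exact sequence.
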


\begin{proof}
Let $M^\bullet$ be a complex of $A$-modules.
Recall from \cite[\href{https://stacks.math.columbia.edu/tag/0AAJ}{Tag 0AAJ}]{stacks-project} that if we assume that $\mathrm H^j(M^\bullet) = 0$ for $j > 0$, then the same holds for $\mathrm H^j(M^{\bullet,\wedge})$.
Now, there exists a bounded (in the sense of spectral sequences) spectral sequence (\cite[\href{https://stacks.math.columbia.edu/tag/0BKE}{Tag 0BKE}]{stacks-project})
\[
E_{2}^{i,j} = \mathrm H^i(\mathrm H^j(M^\bullet)^\wedge) \Rightarrow \mathrm H^{i+j}(M^{\bullet, \wedge}).
\]
It follows that we will always have $E_{2}^{i,j} = 0$ for $i > 0$ or $j > 0$.
As a consequence,
\[
\mathrm H^0(\mathrm H^0(M^\bullet)^\wedge) = \mathrm H^{0}(M^{\bullet, \wedge})
\]
and we can apply this to $M \otimes_{A}^{\mathrm L} N$.
\end{proof}

\begin{rmks}
\begin{enumerate}
\item
If $M^\bullet$ is a complex of $A$-modules, we denote by $M^{\bullet,\mathrm{sep}}$ the (biggest) Hausdorff quotient of $\mathrm H^0(M^\bullet)$.
If $M$ is an $A$-module, then,
\[
\widehat M^\mathrm{sep} \simeq \varprojlim_{n \in \mathbb N} M/I^{n}
\]
is the \emph{classical completion} of $M$.
In particular, $ \widehat M \simeq \widehat M^\mathrm{sep}$ if and only if $\widehat M$ is discrete and classically complete.
\item Set
\[
M^\bullet \widehat \otimes^{\mathrm{sep}}_{A} N^\bullet := (M^\bullet \widehat \otimes^{\mathrm L}_{A} N^\bullet)^{\mathrm{sep}}.
\]
If $M$ and $N$ are two $A$-modules, then $M \widehat \otimes^{\mathrm{sep}}_{A} N$ is the classical completion of $M \otimes_A N$.
Moreover, 
$M \widehat \otimes^\mathrm{L}_{A} N$ is discrete and classically complete if and only if 
\[
M \widehat \otimes^\mathrm{L}_{A} N \simeq M \widehat \otimes^{\mathrm{sep}}_{A} N,
\]
in which case also $M \widehat \otimes^\mathrm{L}_{A} N \simeq M \widehat \otimes_{A} N$.
\end{enumerate}
\end{rmks}

\subsection{Example: the monogenic case}  \label{Append3}

We assume here that $A$ is endowed with the $f$-adic topology for some $f \in A$.

\begin{rmk} If $A$ is $f$-torsion free and $M$ is an $A$-module, then
\begin{enumerate}
\item $M \otimes^{\mathrm L}_{A} A/fA \simeq [M \overset f \to M]$,
\item $M \otimes^{\mathrm L}_{A} A/fA$ is discrete if and only if $M$ is $f$-torsion free,
\item $M $ is completely flat if and only if $M$ is $f$-torsion free and $M/fM$ is flat over $A/fA$.
\item If $M$ is formally flat and separated, then $M$ is $f$-torsion free.
\end{enumerate}
Also, if $M$ is an $f$-torsion free $A$-module, then $\widehat M$ is classically complete and $f$-torsion free.
\end{rmk}

It is however too much to require torsion freeness and we shall introduce a weaker condition.
We will denote the $f$-torsion of an $A$-module $M$ by\footnote{We want to avoid the notation $M[f]$ which may be confused with a twist or a shift.}
\[
{}_{f}M= \{s \in M, fs=0\}.
\]

\begin{dfn} \label{inftors}
\begin{enumerate}
\item The \emph{$f^\infty$-torsion} of an $A$-module $M$ is
\[
{}_{f^\infty}M:= \bigcup_{n\in \mathbb N}{}_{f^{n}}M.
\]
\item 
An $A$-module $M$ has $f^\infty$-torsion \emph{bounded} by $N \in \mathbb N$ if ${}_{f^\infty}M ={}_{f^{N}}M$.
It has \emph{bounded $f^\infty$-torsion} if if it has $f^\infty$-torsion bounded by $N$ for some $N \in \mathbb N$.
\item 
A complex of $A$-modules $M^\bullet$ has \emph{bounded $f^\infty$-torsion} if it is discrete and $\mathrm H^0(M^\bullet)$ has bounded $f^\infty$-torsion.
\end{enumerate}
\end{dfn}

For example, an $A$-module $M$ has bounded $f^\infty$-torsion when $M$ is noetherian, when $M$ is $f$-torsion free or when $M$ is $f^N$-torsion for some $N$.

Recall that an $A$-module $M$ is said to be completely flat if the corresponding complex of $A$-modules placed in degree zero satisfies the condition of definition \ref{compfl} with $I = (f)$.

\begin{lem} \label{compbd}
If $M$ is an $A$-module, then the following conditions are equivalent:
\begin{enumerate}
\item $M$ is a completely flat $A$-module,
\item $M/fM$ is flat over $A/fA$ and $M \otimes^\mathrm{L}_{A} {}_{f}A \simeq {}_{f}M$.
\end{enumerate}
When this is the case, if $A$ has $f^\infty$-torsion bounded by $N$, then so does $M$.
\end{lem}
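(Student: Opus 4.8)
The plan is to reduce both conditions to the vanishing of $\mathrm{Tor}_k^A(M,A/fA)$ for $k\geq 1$. By remark \ref{rmkplus} (with $I=(f)$), condition (1) says exactly that $M/fM$ is flat over $A/fA$ \emph{and} $\mathrm{Tor}_k^A(M,A/fA)=0$ for all $k\geq 1$. Since the flatness of $M/fM$ over $A/fA$ occurs verbatim in both (1) and (2), it suffices to establish
\[
\mathrm{Tor}_k^A(M,A/fA)=0 \text{ for all } k\geq 1 \iff M\otimes^{\mathrm L}_A {}_fA \simeq {}_fM.
\]

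To do this I would use the Koszul complex $\mathrm{Kos}(A,f)=[A\xrightarrow{f}A]$ placed in degrees $-1,0$, whose cohomology is ${}_fA$ in degree $-1$ and $A/fA$ in degree $0$. As it consists of free modules, $M\otimes^{\mathrm L}_A\mathrm{Kos}(A,f)=[M\xrightarrow{f}M]$, with cohomology ${}_fM$ and $M/fM$. Applying $M\otimes^{\mathrm L}_A-$ to the canonical triangle ${}_fA[1]\to\mathrm{Kos}(A,f)\to A/fA\to$ gives
\[
(M\otimes^{\mathrm L}_A {}_fA)[1]\to [M\xrightarrow{f}M]\to M\otimes^{\mathrm L}_A A/fA\to,
\]
where ${}_fA[1]$ denotes ${}_fA$ in degree $-1$. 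Reading off the long exact cohomology sequence, and using that $[M\xrightarrow{f}M]$ is concentrated in degrees $-1,0$, I expect to find that the natural multiplication map $\mu\colon M\otimes_A {}_fA\to {}_fM$ satisfies $\mathrm{coker}(\mu)\cong\mathrm{Tor}_1^A(M,A/fA)$ and $\ker(\mu)\cong\mathrm{Tor}_2^A(M,A/fA)$, while $\mathrm{Tor}_j^A(M,{}_fA)\cong\mathrm{Tor}_{j+2}^A(M,A/fA)$ for $j\geq 1$. Hence all $\mathrm{Tor}_k^A(M,A/fA)$ ($k\geq 1$) vanish precisely when $\mu$ is an isomorphism and $M\otimes^{\mathrm L}_A {}_fA$ is discrete, i.e. when $M\otimes^{\mathrm L}_A {}_fA\simeq {}_fM$. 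This yields the equivalence of (1) and (2).

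For the final assertion, suppose $M$ is completely flat and ${}_{f^\infty}A={}_{f^N}A$. Since the $f^n$-adic and $f$-adic topologies agree, $(f^n)$ is again an ideal of definition, so by the first remark following definition \ref{compfl} the complex $M\otimes^{\mathrm L}_A A/f^nA$ is discrete flat over $A/f^nA$; in particular $\mathrm{Tor}_k^A(M,A/f^nA)=0$ for $k\geq 1$. Running the computation above with $f^n$ in place of $f$ then shows that $\mu_n\colon M\otimes_A {}_{f^n}A\to {}_{f^n}M$ is an isomorphism for every $n$. These maps are compatible with the inclusions ${}_{f^N}A\hookrightarrow {}_{f^n}A$ and ${}_{f^N}M\hookrightarrow {}_{f^n}M$; as the former is an equality for $n\geq N$, the commutative squares force the latter to be an equality as well, whence ${}_{f^\infty}M={}_{f^N}M$.

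The main obstacle is the bookkeeping in that long exact sequence: getting the degree shifts right so that $\ker(\mu)$ and $\mathrm{coker}(\mu)$ land in $\mathrm{Tor}_2$ and $\mathrm{Tor}_1$ respectively, and checking that the induced map $M\otimes_A {}_fA\to {}_fM$ coming from the triangle is genuinely the natural multiplication $\mu$ (rather than some abstract identification), which follows from functoriality of the truncation triangle. Once this identification is secured, the equivalence is a formal dévissage, and the torsion bound drops out by applying the same identity simultaneously to all powers $f^n$.
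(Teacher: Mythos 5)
Your proposal is correct and follows essentially the same route as the paper: both tensor the truncation triangle ${}_{f}A[1] \to [A \overset{f}\to A] \to A/fA \to$ with $M$ and compare the result with the analogous triangle ${}_{f}M[1] \to [M \overset{f}\to M] \to M/fM \to$, you merely unwinding the long exact sequence (correctly identifying $\mathrm{coker}(\mu)$ and $\ker(\mu)$ with $\mathrm{Tor}_1$ and $\mathrm{Tor}_2$) where the paper compares the two triangles directly. The deduction of the torsion bound by running the argument for all powers $f^n$ and using ${}_{f^n}A = {}_{f^N}A$ is exactly the paper's ``applying this result to $f^n$''.
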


\begin{proof}
We know from remark \eqref{rmkplus} after definition \ref{compfl} that $M$ is completely flat if and only if $M/fM$ is flat over $A/fA$ and $M \otimes^\mathrm{L}_{A} A/fA \simeq M \otimes_{A} A/fA$.
Now, by definition, there always exists a distinguished triangle
\[
{}_{f}M[1] \to [M \overset {f} \to M] \to M/fM \to.
\]
From the case $M = A$, we deduce another distinguished triangle
\[
(M \otimes^\mathrm{L}_{A}{}_{f}A)[1] \to [M \overset {f} \to M] \to M \otimes_{A}^\mathrm{L} A/fA \to.
\]
It follows that
\[
M \otimes^\mathrm{L}_{A} {}_{f}A \simeq {}_{f}M \quad \Leftrightarrow \quad M \otimes_{A}^\mathrm{L} A/fA \simeq M/fM.
\]
As a consequence, we will automatically have $M \otimes_{A} {}_{f}A \simeq {}_{f}M$.
Applying this result to $f^n$ gives the last assertion.
\end{proof}

\begin{lem} \label{infbd}
\begin{enumerate}
\item If $M$ is an $A$-module with bounded $f^\infty$-torsion, then there exists an isomorphism of pro-complexes
\[
\left\{[M \overset {f^n} \to M]\right\}_{n \in \mathbb N} \simeq \{M/f^nM\}_{n \in \mathbb N}.
\]
\item If $A$ has bounded $f^\infty$-torsion and $M^\bullet$ is any complex of $A$-modules, then there exists an isomorphism of pro-complexes
\[
\left\{[M^\bullet \overset {f^n} \to M^\bullet]\right\}_{n \in \mathbb N} \simeq \left\{M^\bullet \otimes^\mathrm{L}_{A}A/f^nA\right\}_{n \in \mathbb N}.
\]
\end{enumerate}
\end{lem}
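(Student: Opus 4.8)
The plan is to exhibit in each case an explicit morphism of pro-complexes and to show that its fibre, computed levelwise, forms a pro-zero system. Recall that a pro-object $\{N_n\}_{n \in \mathbb N}$ is \emph{pro-zero} when for every $n$ there is some $m \geq n$ for which the transition map $N_m \to N_n$ vanishes, and that a morphism of pro-complexes fitting into a levelwise distinguished triangle whose third term is pro-zero is automatically a pro-isomorphism. For assertion (1) I would equip $\{[M \overset{f^n}\to M]\}_n$ with the transition maps given in degree $-1$ by multiplication by $f$ and in degree $0$ by the identity; these are chain maps because the square $\mathrm{id}\circ f^{n+1} = f^n\circ f$ commutes, and on $\mathrm H^0$ they induce exactly the projections $M/f^{n+1}M \to M/f^nM$. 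The levelwise projection onto the zeroth cohomology then defines a morphism of pro-complexes
\[
\{[M \overset{f^n}\to M]\}_n \longrightarrow \{M/f^nM\}_n,
\]
whose fibre, by the triangle of lemma \ref{compbd} applied to $f^n$ in place of $f$, is the pro-system $\{{}_{f^n}M[1]\}_n$ with transition maps given by multiplication by $f$.

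The crucial point, and the only place the hypothesis enters, is that this torsion pro-system is pro-zero. If $M$ has $f^\infty$-torsion bounded by $N$ in the sense of definition \ref{inftors}, then the $N$-fold composite of transition maps from level $n+N$ to level $n$ is multiplication by $f^N$; since every element of ${}_{f^{n+N}}M$ lies in ${}_{f^\infty}M = {}_{f^N}M$, it is killed by $f^N$, so this composite is zero. Hence $\{{}_{f^n}M\}_n$ is pro-zero and the displayed morphism is a pro-isomorphism, which proves (1).

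For assertion (2) I would first observe that $[M^\bullet \overset{f^n}\to M^\bullet] \simeq M^\bullet \otimes^{\mathrm L}_A [A \overset{f^n}\to A]$, since the Koszul complex $[A \overset{f^n}\to A]$ consists of free modules. Applying part (1) to the module $A$, which has bounded $f^\infty$-torsion by hypothesis, yields a pro-isomorphism $\{[A \overset{f^n}\to A]\}_n \simeq \{A/f^nA\}_n$ whose fibre is the pro-zero system $\{{}_{f^n}A[1]\}_n$. Tensoring this situation with $M^\bullet$ over $A$ produces the desired morphism
\[
\{[M^\bullet \overset{f^n}\to M^\bullet]\}_n \longrightarrow \{M^\bullet \otimes^{\mathrm L}_A A/f^nA\}_n,
\]
and it remains only to check that its fibre $\{M^\bullet \otimes^{\mathrm L}_A {}_{f^n}A\}_n$ is again pro-zero. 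This is immediate: a transition map that vanishes after an $N$-fold composite still vanishes after applying the functor $M^\bullet \otimes^{\mathrm L}_A(-)$, so the tensored system is pro-zero and the morphism is a pro-isomorphism.

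The main subtlety to watch is the pro-object formalism in the derived category: one must verify that the levelwise triangles assemble into an honest morphism of pro-systems and invoke the standard but slightly delicate fact that a pro-zero fibre forces a pro-isomorphism, rather than a mere termwise quasi-isomorphism in the limit. Once this is granted, everything reduces to the bookkeeping of the transition maps recorded above and the single torsion estimate coming from the boundedness hypothesis.
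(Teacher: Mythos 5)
Your proposal is correct and follows essentially the same route as the paper: the distinguished triangle ${}_{f^n}M[1] \to [M \overset{f^n}\to M] \to M/f^nM \to$ together with the observation that $\{{}_{f^n}M\}_n$ is pro-zero (transition maps being multiplication by $f$, killed after $N$ steps by the boundedness hypothesis), then tensoring with $M^\bullet$ for the second assertion. You are merely a bit more explicit than the paper about the transition maps and about why pro-zero is preserved under $M^\bullet \otimes^{\mathrm L}_A(-)$, which is a welcome clarification but not a different argument.
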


\begin{proof}
Recall that there exists a distinguished triangle
\[
{}_{f^n}M[1] \to [M \overset {f^n} \to M] \to M/f^nM \to.
\]
If the $f^\infty$-torsion of $M$ is bounded by $N$, we will have
\[
\left\{{}_{f^n}M\right\}_{n \in \mathbb N} \simeq \left\{{}_{f^N}M\right\}_{n \in \mathbb N} \simeq 0
\]
because the transition map is multiplication by $f$ (and therefore eventually $0$ on ${}_{f^N}M$).
We obtain the first isomorphism.
If we assume now that $A$ has bounded $f^\infty$-torsion and apply the first result to $A$, we obtain an isomorphism
\[
\left\{[M^\bullet \overset {f^n} \to M^\bullet]\right\}_{n \in \mathbb N} = \left\{M^\bullet \otimes^\mathrm{L}_{A}[A \overset {f^n} \to A]\right\}_{n \in \mathbb N} \simeq \{M^\bullet \otimes^\mathrm{L}_{A}A/f^nA\}_{n \in \mathbb N}. \qedhere
\] 
\end{proof}

\begin{rmk}
If $M^\bullet$ is a complex of $A$-modules, then its derived completion is
\[
\widehat {M^\bullet} = \mathrm R\varprojlim_{n} [M^\bullet \overset {f^n} \to M^\bullet].
\]
\end{rmk}

\begin{prop} \label{modeqn}
If $M$ has $f^\infty$-torsion bounded by $N$, then its derived completion $\widehat M$ is classically complete with $f^{\infty}$-torsion bounded by $N$.
\end{prop}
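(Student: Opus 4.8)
The plan is to reduce the derived completion $\widehat M$ to a genuine inverse limit and then control its torsion by splitting off the torsion submodule. The main thing to keep in mind throughout is the appendix's Caveat: here $\widehat M$ means \emph{derived} completion, and the content of the proposition is precisely that, under the boundedness hypothesis, it collapses to the classical one.

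First I would invoke the remark preceding the statement, which gives $\widehat M = \mathrm R\varprojlim_n [M \overset{f^n}\to M]$, together with Lemma \ref{infbd}(1): since $M$ has bounded $f^\infty$-torsion, the pro-systems $\{[M \overset{f^n}\to M]\}_n$ and $\{M/f^nM\}_n$ are isomorphic, and isomorphic inverse systems have the same derived inverse limit. Thus $\widehat M \simeq \mathrm R\varprojlim_n M/f^nM$. The transition maps $M/f^{n+1}M \to M/f^nM$ are surjective, so the system is Mittag--Leffler, $\mathrm R^1\varprojlim$ vanishes, and $\widehat M = \varprojlim_n M/f^nM$ is discrete and equal to the classical $f$-adic completion of $M$. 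As the ideal $(f)$ is finitely generated, this classical completion is itself classically complete, which settles the discreteness and classical completeness assertions.

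It then remains to bound the torsion, and for this I would pass to the torsion exact sequence. Set $T := {}_{f^N}M = {}_{f^\infty}M$ and $M'' := M/T$; then $M''$ is $f$-torsion free and $0 \to T \to M \to M'' \to 0$ is exact. Applying derived completion, which preserves distinguished triangles, yields $\widehat T \to \widehat M \to \widehat{M''} \to \widehat T[1]$. Running the first step on $T$ (which is $f^N$-torsion, so $T/f^nT = T$ for $n \geq N$) gives $\widehat T = T$, and the remark on $f$-torsion free modules in Section \ref{Append3} gives that $\widehat{M''}$ is classically complete and $f$-torsion free. Since $T$, $\widehat{M''}$, and $\widehat M$ are all discrete, the triangle is a short exact sequence $0 \to T \to \widehat M \to \widehat{M''} \to 0$.

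The bound now falls out: if $s \in \widehat M$ satisfies $f^m s = 0$, then its image in the $f$-torsion free (hence $f^m$-torsion free) module $\widehat{M''}$ is zero, so $s \in T$ and therefore $f^N s = 0$; thus ${}_{f^\infty}\widehat M = {}_{f^N}\widehat M$. I expect the only real obstacle to be the bookkeeping between derived and classical completion: one must verify that the derived limit genuinely degenerates to the classical completion (via the Mittag--Leffler argument) and that the torsion-free quotient's completion is both torsion free and complete. Once these structural facts are in place, the torsion estimate itself is immediate from the short exact sequence.
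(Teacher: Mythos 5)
Your proof is correct, and its second half takes a genuinely different route from the paper's. The first step (using lemma \ref{infbd}(1) and Mittag--Leffler to collapse $\widehat M = \mathrm R\varprojlim_n [M \overset{f^n}\to M]$ to the classical completion $\varprojlim_n M/f^nM$) is exactly the paper's. For the torsion bound, however, the paper argues element-wise in $\varprojlim_n M/f^nM$: given a class represented by $(s_n)$ with $f^m s_n \to 0$ for some $m \geq N$, it adjusts representatives using the bound on ${}_{f^\infty}M$ to show directly that $f^N s_n \to 0$. You instead split off the torsion submodule $T := {}_{f^N}M = {}_{f^\infty}M$, apply derived completion (as a triangulated functor) to $0 \to T \to M \to M/T \to 0$, compute $\widehat T = T$ from the eventually constant system, invoke the remark that the completion of the $f$-torsion free module $M/T$ is classically complete and $f$-torsion free, and read the bound off the resulting short exact sequence $0 \to T \to \widehat M \to \widehat{M/T} \to 0$. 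Your argument is more structural: it avoids the representative chasing, and it yields the slightly sharper statement that ${}_{f^\infty}\widehat M$ is exactly the image of ${}_{f^\infty}M$. The paper's computation is more elementary and self-contained, needing neither the torsion-free remark nor the exactness of derived completion on distinguished triangles. Both are valid; the only point worth making explicit in your write-up is that $M/T$ is indeed $f$-torsion free (if $f\bar s = 0$ in $M/T$ then $f^{N+1}s = 0$ in $M$, so $s \in {}_{f^\infty}M = T$), which is where the boundedness hypothesis enters your decomposition.
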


\begin{proof} It follows from the first assertion of lemma \ref{infbd} (and Mittag-Leffler argument) that derived and usual completions coincide.
Assume now that $f^ms_{n} \to 0$ when $n \to \infty$ for some $m \geq N$ and that the $f^\infty$-torsion of $M$ is bounded by $N$.
Then, given $k \in \mathbb N$, if we write $k' = k+m-N$, there exists $n_{0} \in \mathbb N$ such that for all $n \geq n_{0}$, we can write $f^ms_{n} = f^{k'}t_n$ with $t_n \in M$.
But then $f^m(s_{n} - f^{k'-m}t_n) = 0$ and therefore already $f^N(s_{n} - f^{k'-m}t_n) = 0$ so that $f^Ns_{n} = f^{k}t_n$.
This shows that $f^Ns_{n} \to 0$.
Thus we see that $\widehat M$ also has $f^\infty$-torsion bounded by $N$.
\end{proof}

\begin{prop} \label{modeq2}
If $A$ has bounded $f^\infty$-torsion and $M$ is an $A$-module, then the following are equivalent
\begin{enumerate}
\item \label{cond1} $M$ is completely flat,
\item \label{cond2} $M$ is formally flat with bounded $f^\infty$-torsion.
\end{enumerate}
\end{prop}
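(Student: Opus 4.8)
The plan is to prove the two implications separately, using lemma \ref{compbd} to reduce complete flatness to a condition that is visible modulo $f$.

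For (1)$\Rightarrow$(2): assume $M$ is completely flat. That $M$ has bounded $f^\infty$-torsion is exactly the last assertion of lemma \ref{compbd}, since $A$ does. For formal flatness I would feed complete flatness into lemma \ref{power}: by definition \ref{compfl} the complex $M \otimes^{\mathrm L}_A A/fA$ is discrete flat over $A/fA$, and since $(f)^n \subseteq (f^n)$, lemma \ref{power} (with $I=(f)$, $J=(f^n)$) promotes this to $M \otimes^{\mathrm L}_A A/f^nA$ being discrete flat over $A/f^nA$ for every $n$. Taking $\mathrm H^0$ shows $M/f^nM$ is flat over $A/f^nA$, i.e.\ $M$ is formally flat.

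For (2)$\Rightarrow$(1): the $n=1$ instance of formal flatness gives that $M/fM$ is flat over $A/fA$, so by lemma \ref{compbd} it remains to prove $M \otimes^{\mathrm L}_A {}_fA \simeq {}_fM$, equivalently $\mathrm{Tor}^A_k(M,A/fA)=0$ for all $k\ge 1$. The concrete heart of the matter is the case $k=1$. Writing $fA \simeq A/{}_fA$ and using the sequence $0 \to fA \to A \to A/fA \to 0$, one identifies $\mathrm{Tor}^A_1(M,A/fA)$ with ${}_fM/({}_fA)M$, so I must show every $f$-torsion element of $M$ lies in $({}_fA)M$. Given $s\in {}_fM$, I would reduce modulo $f^m$ with $m$ larger than the torsion bound $N'$ of $M$ and apply the equational criterion for flatness of $M/f^mM$ over $A/f^mA$ to the relation $f\bar s=0$; lifting the resulting coefficients $b_j$ (which satisfy $fb_j\in f^mA$, hence $b_j\in f^{m-1}A+{}_fA$) expresses $s$, up to $f^mM$, as an element $w\in({}_fA)M$ plus a term in $f^{m-1}M$. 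The error $s-w$ then lies in ${}_fM\cap f^{m-1}M$, and this vanishes: if $f^{m-1}v$ is killed by $f$ then $v\in {}_{f^m}M={}_{f^{N'}}M$, whence $f^{m-1}v=0$. Thus $s\in({}_fA)M$ and $\mathrm{Tor}^A_1(M,A/fA)=0$; the same argument applied to $f^n$ gives $\mathrm{Tor}^A_1(M,A/f^nA)=0$ for all $n$.

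The main obstacle is the higher vanishing ($k\ge 2$), i.e.\ the full statement $M\otimes^{\mathrm L}_A {}_fA \simeq {}_fM$. Here there is a genuine circularity: by base change along $A\to A/f^mA$ the discreteness of each of $M\otimes^{\mathrm L}_A A/fA$, $M\otimes^{\mathrm L}_A A/f^mA$ and $M\otimes^{\mathrm L}_A {}_fA$ is equivalent, so no purely formal bootstrap is possible and the bounded torsion must be used as an honest input, exactly as in the step above. The route I would pursue is to pass to $m\ge N$, where ${}_{f^m}A$ stabilizes to the fixed module ${}_{f^\infty}A$ annihilated by $f^N$; base change then rewrites $M\otimes^{\mathrm L}_A {}_fA$ in terms of $M\otimes^{\mathrm L}_A A/f^mA$ and the flat module $M/f^mM$, while all higher Tor groups in sight are annihilated by $f^N$. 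Combining this with the pro-isomorphism $\{M\otimes^{\mathrm L}_A A/f^mA\}_m\simeq\{M/f^mM\}_m$ of lemma \ref{infbd} is what forces the residual higher cohomology to vanish. An alternative worth trying is to reduce to the $f$-torsion-free quotient $\bar A:=A/{}_{f^\infty}A$, over which formal flatness forces $f$-torsion freeness and hence complete flatness, and to treat the bounded torsion submodule ${}_{f^\infty}A$ directly by dévissage.
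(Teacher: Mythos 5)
Your direction (1)$\Rightarrow$(2) is fine and is essentially what the paper does: formal flatness comes from lemma \ref{power} (equivalently, the remark that completely flat implies formally flat) and the torsion bound from the last assertion of lemma \ref{compbd}.

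The converse direction, however, has a genuine gap, and you have located it yourself: complete flatness requires the whole complex $M \otimes^{\mathrm L}_A A/fA$ to be discrete, i.e.\ $\mathrm{Tor}^A_k(M,A/fA)=0$ for \emph{all} $k\ge 1$, and your equational-criterion argument (which is correct as far as it goes) only disposes of $k=1$. Since $A/fA$ need not have finite Tor-dimension over $A$ --- the d\'evissage $\mathrm{Tor}_k(M,A/fA)\simeq\mathrm{Tor}_{k-1}(M,fA)\simeq\mathrm{Tor}_{k-2}(M,{}_fA)$ just cycles back to a module killed by $f$ --- nothing you have written forces the higher groups to vanish, and the two routes you sketch at the end are stated as intentions rather than carried out. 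The paper closes this in one stroke with the pro-complex formalism you allude to: by lemma \ref{infbd}, bounded $f^\infty$-torsion of $M$ gives $\left\{[M \overset {f^n} \to M]\right\}_n \simeq \{M/f^nM\}_n$, while bounded $f^\infty$-torsion of $A$ gives $\left\{[M \overset {f^n} \to M]\right\}_n \simeq \{M\otimes^{\mathrm L}_A A/f^nA\}_n$. Applying $-\otimes^{\mathrm L}_{A/f^nA}A/fA$ to the resulting pro-isomorphism, the left-hand system is constant with value $M\otimes^{\mathrm L}_A A/fA$ (associativity of the derived tensor product), whereas the right-hand system is constant with value $M/fM$ because $M/f^nM$ is flat over $A/f^nA$, so the derived tensor there is underived. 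A pro-isomorphism between essentially constant pro-objects is an isomorphism of the values, whence $M\otimes^{\mathrm L}_A A/fA\simeq M/fM$ is discrete flat over $A/fA$, which is complete flatness. This is the precise sense in which ``combining with the pro-isomorphism of lemma \ref{infbd}'' works: one base-changes the entire pro-isomorphism to $A/fA$ at once instead of trying to kill the Tor groups degree by degree.
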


\begin{proof} Since complete flatness implies formal flatness, we already know from lemma \ref{compbd} that condition \ref{cond1} implies condition \ref{cond2}.
Conversely, using both assertions in lemma \ref{infbd}, we will have an isomorphism of pro-complexes
\[
\{(M \otimes^\mathrm{L}_{A}A/f^nA) \otimes^\mathrm{L}_{A/f^nA} A/fA\}_{n \in \mathbb N} \simeq \{M/f^nM \otimes^\mathrm{L}_{A/f^nA} A/fA\}_{n \in \mathbb N}.
\]
But
\[
(M \otimes^\mathrm{L}_{A} A/f^nA) \otimes^\mathrm{L}_{A/f^nA} A/fA \simeq M \otimes^\mathrm{L}_{A} A/fA
\]
and, since $M/f^nM$ is flat over $A/f^nA$,
\[
M/f^nM \otimes^\mathrm{L}_{A/f^nA} A/fA \simeq M/f^nM \otimes_{A/f^nA} A/fA = M/fM.
\]
It follows that $M \otimes^\mathrm{L}_{A} A/fA \simeq M/fM$ is discrete flat over $A/fA$.
\end{proof}

\begin{prop} \label{monobd}
If $A$ has $f^{\infty}$-torsion bounded by $N$ and $M$ is a formally flat $A$-module which is separated, then $M$ also has $f^{\infty}$-torsion bounded by $N$.
\end{prop}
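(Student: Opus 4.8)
The plan is to prove the sharper statement that $f^N$ annihilates every $f^\infty$-torsion element of $M$. So let $s \in M$ with $f^n s = 0$ for some $n$; since the case $n \le N$ is immediate, I may assume $n \ge N$, the goal being $f^N s = 0$. Because $M$ is separated, $\bigcap_j f^j M = 0$, so it suffices to show that $f^N s$ lies in $f^j M$ for arbitrarily large $j$.

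The idea is to test this in the discrete truncations $A_k := A/f^k A$ and $M_k := M/f^k M$ for $k$ large, where formal flatness of $M$ means precisely that each $M_k$ is flat over $A_k$ (the reformulation recalled after Definition \ref{formflat}). The first ingredient is the standard remark that over any ring the torsion of a flat module is cut out by the torsion of the ring: tensoring the exact sequence $0 \to {}_{x}A_k \to A_k \overset{x}\to A_k$ with the flat module $M_k$ shows that ${}_{x}M_k = ({}_{x}A_k)M_k$ for every $x \in A_k$.

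The second ingredient is a direct computation of ${}_{f^m}A_k$ for $m \ge N$ and $k \ge m$, and this is the only place the hypothesis that $A$ has $f^\infty$-torsion bounded by $N$ (so ${}_{f^m}A = {}_{f^N}A$) enters: a short manipulation gives ${}_{f^m}A_k = (f^{k-m}A + {}_{f^N}A)/f^k A$, and hence ${}_{f^m}M_k = (f^{k-m}M + ({}_{f^N}A)M)/f^k M$. Applying this with $m = n$ to the class of $s$, which is $f^n$-torsion in $M_k$, produces a decomposition $s = u + t$ with $u \in f^{k-n}M$ and $t \in ({}_{f^N}A)M$. As $f^N t = 0$ by construction, this yields $f^N s = f^N u \in f^{\,k-n+N}M$. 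Letting $k \to \infty$ and invoking separatedness then forces $f^N s \in \bigcap_j f^j M = 0$.

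The main obstacle is the bookkeeping in the middle: correctly identifying the $f^m$-torsion of the truncation $A_k$ and transporting it to $M_k$ through flatness; once both computations are in hand, separatedness closes the argument immediately. For $N = 0$ the same argument recovers the torsion-freeness of a separated formally flat module over an $f$-torsion free ring recalled in the remark preceding Definition \ref{inftors}.
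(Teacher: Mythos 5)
Your proof is correct. Every step checks out: the identity ${}_{x}M_k = ({}_{x}A_k)M_k$ for the flat $A_k$-module $M_k$ follows from tensoring $0 \to {}_{x}A_k \to A_k \overset{x}\to A_k$ as you say; the computation ${}_{f^m}A_k = (f^{k-m}A + {}_{f^N}A)/f^kA$ for $k \ge m \ge N$ is exactly where the bound on the torsion of $A$ enters; and the decomposition $s = u + t$ with $f^N t = 0$ gives $f^N s \in f^{k-n+N}M$ for all $k$, which separatedness kills.

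The ingredients are the same as in the paper's proof --- flatness of the truncations $M/f^jM$ over $A/f^jA$, plus a description of the torsion of the truncated ring in terms of ${}_{f^N}A$ --- but the packaging differs. The paper first reduces to the case where $M$ is classically complete (using separatedness to embed $M$ in its completion), tensors the four-term exact sequence $0 \to {}_{f^N}A \to A/f^n \overset{f^k}\to A/f^{n+k} \to A/f^k \to 0$ with the flat module $M/f^{n+k}M$, and then passes to the inverse limit over $n$ to obtain the left exact sequence $0 \to M \otimes_A {}_{f^N}A \to M \overset{f^k}\to M$; this yields the slightly stronger structural statement that ${}_{f^k}M = M \otimes_A {}_{f^N}A$ for all $k \ge N$, independently of $k$. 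Your version dispenses with the reduction to the complete case and with inverse limits altogether, replacing them by an element chase in the truncations that is closed off by $\bigcap_j f^jM = 0$; it is more elementary, at the cost of not exhibiting ${}_{f^N}M$ as $M \otimes_A {}_{f^N}A$. Either route is perfectly adequate for the way the proposition is used later (in proposition \ref{torsfr}).
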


\begin{proof}
Since $M$ is separated, it is contained in its classical completion and we may therefore assume from the begining that $M$ is actually classically complete.
There exists, for $k,n \geq N$, a commutative diagram with exact row and columns
\[
\xymatrix{&& & 0 \ar[d] & 0 \ar[d] 
\\ && & {}_{f^N}A \ar[d] \ar@{=}[r] &  {}_{f^N}A \ar[d] 
\\ 0 \ar[r]  & {}_{f^N}A \ar[r] \ar@{=}[d] & A \ar[r]^-{f^n} \ar@{=}[d]  & A \ar[r] \ar[d]^{f^k} & A/f^{n}\ar[d]^{f^k}  \ar[r] & 0
\\ 0 \ar[r] & {}_{f^N}A \ar[r] &A \ar[r]^-{f^{n+k}} & A \ar[r] \ar[d] & A/f^{n+k} \ar[r] \ar[d] & 0
\\ &&  & A/f^k \ar@{=}[r] \ar[d] & A/f^k \ar[d] 
\\ &&& 0 & 0.}
\]
In particular, there exists an exact sequence of $A/f^{n+k}$-modules
\[
0 \to  {}_{f^N}A \to A/f^n \overset {f^k} \to A/f^{n+k} \to A/f^k \to 0.
\]
Since $M$ is formally flat, $M/f^{n+k}M$ is flat over $A/f^{n+k}$, and therefore, the sequence
\[
0 \to M \otimes_A {}_{f^N}A  \to M/f^nM \overset {f^k} \to M/f^{n+k}M \to M/f^kM \to 0
\]
is also exact.
Taking inverse limit provides us with a left exact sequence
\[
0 \to M \otimes_A  {}_{f^N}A \to M \overset {f^k} \to M,
\]
showing that ${}_{f^k} M = M \otimes_A  {}_{f^N}A$ does not depend on $k$.
\end{proof}

\begin{prop} \label{monoth}
Assume $A$ has bounded $f^\infty$-torsion.
If $M^\bullet$ be a complex of $A$-modules, then, the following conditions are equivalent
\begin{enumerate}
\item $M^\bullet$ is derived complete and completely flat,
\item $M^\bullet$ is classically complete and formally flat.
\end{enumerate}
\end{prop}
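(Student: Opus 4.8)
The plan is to prove the two implications separately; both reduce almost entirely to the monogenic results already established, the only genuinely new work being the proof that a derived complete, completely flat complex is automatically \emph{discrete}.

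For the implication $(2) \Rightarrow (1)$, I would start from the hypothesis that $M^\bullet$ is classically complete and formally flat. By definition it is then discrete, say $M^\bullet \simeq M := \mathrm H^0(M^\bullet)$ with $M$ classically complete and formally flat. Classical completeness forces derived completeness (the last remark after Definition \ref{defcomp}), so it only remains to check complete flatness. Since $M$ is classically complete it is separated, so Proposition \ref{monobd} applies and shows that $M$ inherits from $A$ a bound on its $f^\infty$-torsion. Being formally flat with bounded $f^\infty$-torsion, $M$ is completely flat by Proposition \ref{modeq2}. This settles the first direction, and it is here that the bounded-torsion hypothesis on $A$ enters both through \ref{monobd} and \ref{modeq2}.

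For $(1) \Rightarrow (2)$, assume $M^\bullet$ is derived complete and completely flat. First I would upgrade complete flatness from the single modulus $f$ to all its powers: since $(f)^{n+1} \subset (f^n)$, Lemma \ref{power} gives that $M_n := M^\bullet \otimes^{\mathrm L}_A A/f^n A$ is discrete and flat over $A/f^n A$ for every $n$. Next, because $A$ has bounded $f^\infty$-torsion, the second assertion of Lemma \ref{infbd} identifies the pro-system $\{[M^\bullet \overset{f^n}\to M^\bullet]\}$ with $\{M_n\}$, so derived completeness yields $M^\bullet \simeq \mathrm R\varprojlim_n M_n$. The key point is that the transition maps $M_{n+1} \to M_n$ are surjective: flatness of $M_{n+1}$ over $A/f^{n+1}A$ gives $M_{n+1} \otimes_{A/f^{n+1}A} A/f^n A \simeq M_n$, and this base-change map is exactly the transition map, hence surjective. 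Surjective transition maps yield the Mittag--Leffler condition, so $\mathrm R^1\varprojlim$ vanishes and $M^\bullet \simeq \varprojlim_n M_n =: M$ is discrete. Finally, $M \otimes^{\mathrm L}_A A/f^n A = M_n$ forces $M/f^n M = M_n$ to be flat over $A/f^n A$ (formal flatness) and $M \simeq \varprojlim_n M/f^n M$ (classical completeness), which is precisely condition $(2)$.

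The main obstacle is exactly this discreteness step in $(1) \Rightarrow (2)$: the hypotheses only control $M^\bullet$ after reduction modulo powers of $f$, and one must reassemble these reductions through a derived limit that a priori could carry $\mathrm H^{-1}$ contributions. The crucial leverage is that flatness modulo $f^n$ makes the transition maps surjective, which is what kills $\mathrm R^1\varprojlim$ and collapses the derived limit to an ordinary one; without the bounded-torsion hypothesis on $A$ (used through Lemma \ref{infbd}) one could not even express the derived completion as this limit in the first place.
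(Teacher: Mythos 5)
Your proof is correct and follows essentially the same route as the paper: both reduce $(2)\Rightarrow(1)$ to Propositions \ref{monobd} and \ref{modeq2}, and both prove $(1)\Rightarrow(2)$ by writing $M^\bullet \simeq \mathrm R\varprojlim_n M^\bullet \otimes^{\mathrm L}_A A/f^nA$ via Lemma \ref{infbd}, checking that the terms are discrete and the transition maps surjective, and collapsing the derived limit by Mittag--Leffler. The only (immaterial) difference is that you get surjectivity of the transition maps from flatness of $M_{n+1}$ over $A/f^{n+1}A$ and base change, whereas the paper tensors with the graded piece $f^nA/f^{n+1}A$ and uses the resulting long exact sequence.
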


\begin{proof}
Assume that $M^\bullet$ is derived complete and completely flat.
Since $A$ has bounded $f^\infty$-torsion, it follows from lemma \ref{infbd} and the fact that $M^\bullet$ is derived complete, that there exists an isomorphism
\[
M^\bullet = \mathrm R\varprojlim_{n} [M^\bullet \overset {f^n}\to M^\bullet] \simeq \mathrm R \varprojlim_{n} M^\bullet \otimes^\mathrm{L}_{A} A/f^nA.
\]
Since $M^\bullet$ is completely flat and $f^nA/f^{n+1}A$ is an $A/f^nA$-module, the complex
\[
M^\bullet \otimes_{A}^\mathrm{L} (f^nA/f^{n+1}A)
\]
is discrete, and this implies the surjectivity of the map
\[
\mathrm H^0(M^\bullet \otimes_{A}^\mathrm{L} A/f^{n+1}A) \twoheadrightarrow \mathrm H^0(M^\bullet \otimes_{A}^\mathrm{L} A/f^{n}A).
\]
Using again the fact that $M^\bullet$ is completely flat, we also know that
\[
M^\bullet \otimes^\mathrm{L}_{A} A/f^nA \simeq \mathrm H^0(M^\bullet \otimes^\mathrm{L}_{A} A/f^nA).
\]
It follows that
\[
M^\bullet \simeq \varprojlim \mathrm H^0(M^\bullet \otimes^\mathrm{L}_{A} A/f^nA)
\]
is discrete.
The complex $M^\bullet$ being discrete and completely flat, we have
\[
\mathrm H^0(M^\bullet \otimes_{A}^\mathrm{L} A/f^nA) = \mathrm H^0(M^\bullet)/f^n\mathrm H^0(M^\bullet).
\]
It follows that
\[
\mathrm H^0(M^\bullet) \simeq M^\bullet \simeq \varprojlim \mathrm H^0(M^\bullet)/f^n\mathrm H^0(M^\bullet)
\]
is classically complete.
Everything else follows from the previous results.
\end{proof}

\subsection{Example: the ``bigenic'' case}  \label{Append4}

We assume here that $A$ is endowed with the $(f, g)$-adic topology for some fixed $f, g\in A$.

\begin{dfn} \label{defbound}
\begin{enumerate}
\item
An $A$-module $M$ is said to be \emph{bounded} (with respect to $f$ modulo $g$) if $M$ is $g$-torsion free and $M/gM$ has bounded $f^\infty$-torsion.
\item 
A complex of $A$-modules $M^\bullet$ is said to be \emph{bounded} if it is discrete and $\mathrm H^0(M^\bullet)$ is bounded.
\end{enumerate}
\end{dfn}

We will also say that $A$ is \emph{bounded} when it is bounded as an $A$-module.

\begin{rmks}
\begin{enumerate}
\item
This notion obviously depends on the ordered pair $(f,g)$, but the properties will also hold if we replace any of them by some power.
\item
If $M$ is a $g$-torsion free $A$-module, then we have
\[
\mathrm{Kos}(M, f, g)
:=
\left[
\begin{gathered}
\xymatrix{M \ar[r]^{g} & M \\ M \ar[r]^{g} \ar[u]^{f} & M \ar[u]^{f}}
\end{gathered}
\right] \simeq [M/gM \overset {f} \to M/gM].
\]
\end{enumerate}
\end{rmks}

\begin{lem} \label{bdlem}
\begin{enumerate}
\item
If $M$ is a bounded $A$-module, then there exists an isomorphism of pro-complexes
\[
\left\{\mathrm{Kos}(M, f^n, g^m)\right\}_{n,m \in \mathbb N} \simeq \{M/(f^n, g^m)\}_{n,m \in \mathbb N}.
\]
\item
If $A$ is bounded and $M$ is any $A$-module, then there exists an isomorphism of pro-complexes
\[
\left\{\mathrm{Kos}(M, f^n, g^m)\right\}_{n,m \in \mathbb N} \simeq \{M \otimes^\mathrm{L}_{A} A/(f^n, g^m)\}_{n,m \in \mathbb N}.
\]
\end{enumerate}
\end{lem}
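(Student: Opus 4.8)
The plan is to reduce the bigenic statement to the monogenic Lemma \ref{infbd} by first killing $g$, and then to deduce part (2) from part (1) by tensoring. For part (1), I observe that a bounded module $M$ is in particular $g$-torsion free (Definition \ref{defbound}), hence $g^m$-torsion free for every $m$, so that $[M \overset{g^m}\to M]$ is quasi-isomorphic to $M/g^mM$. Writing $\mathrm{Kos}(M, f^n, g^m) = [M \overset{g^m}\to M] \otimes_A \mathrm{Kos}(A, f^n)$ and using that $\mathrm{Kos}(A, f^n)$ is a complex of free $A$-modules (so that tensoring with it preserves quasi-isomorphisms), I obtain a quasi-isomorphism $\mathrm{Kos}(M, f^n, g^m) \simeq \mathrm{Kos}(M/g^mM, f^n)$, natural in $n$ and $m$. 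It then suffices to run Lemma \ref{infbd}(1) for the $A$-module $M/g^mM$ and the element $f$.

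For this I must know that $M/g^mM$ has bounded $f^\infty$-torsion for each $m$. I would prove it by induction on $m$: the case $m=1$ is the hypothesis that $M$ is bounded, and $g$-torsion-freeness yields a short exact sequence $0 \to M/g^{m-1}M \overset g\to M/g^mM \to M/gM \to 0$. Since an extension of two modules with bounded $f^\infty$-torsion again has bounded $f^\infty$-torsion (if the outer terms are bounded by $N'$ and $N'''$, a short chase shows the middle is bounded by $N'+N'''$), the claim follows with a bound $N_m$ that may grow with $m$. This growth is harmless: via the triangle ${}_{f^n}(M/g^mM)[1] \to \mathrm{Kos}(M/g^mM, f^n) \to M/(f^n,g^m) \to$, the discrepancy sits in the pro-system $\{{}_{f^n}(M/g^mM)\}_{n,m}$, and to see this is pro-zero it is enough, for a fixed target index $(n_0,m_0)$, to raise only $n$: the transition map ${}_{f^{\,n_0+N_{m_0}}}(M/g^{m_0}M) \to {}_{f^{n_0}}(M/g^{m_0}M)$ is multiplication by $f^{N_{m_0}}$, which annihilates ${}_{f^\infty}(M/g^{m_0}M) = {}_{f^{N_{m_0}}}(M/g^{m_0}M)$. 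Hence the pro-system of cones vanishes and part (1) follows.

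For part (2), I would apply part (1) to $A$ itself, which is bounded by assumption, to get the pro-isomorphism $\{\mathrm{Kos}(A, f^n, g^m)\} \simeq \{A/(f^n,g^m)\}$, and then tensor. Since $\mathrm{Kos}(A, f^n, g^m)$ is a bounded complex of finite free $A$-modules, one has $\mathrm{Kos}(M, f^n, g^m) = M \otimes_A \mathrm{Kos}(A, f^n, g^m) = M \otimes^{\mathrm L}_A \mathrm{Kos}(A, f^n, g^m)$. Applying the triangulated functor $M \otimes^{\mathrm L}_A (-)$ to the pro-isomorphism of part (1) preserves it, since it carries the pro-zero cone to $M \otimes^{\mathrm L}_A(-)$ of a pro-zero system, whose transition maps remain eventually zero; this gives $\{\mathrm{Kos}(M, f^n, g^m)\} \simeq \{M \otimes^{\mathrm L}_A A/(f^n,g^m)\}$ as desired.

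The main obstacle is the bookkeeping in part (1): establishing boundedness of the $f^\infty$-torsion of $M/g^mM$ and, crucially, recognizing that although the bound $N_m$ is not uniform in $m$, pro-vanishing of the double-indexed torsion system can be achieved by increasing $n$ alone, so the non-uniformity does not obstruct the argument. Everything else is a formal transfer from the monogenic Lemma \ref{infbd} through the two quasi-isomorphism reductions.
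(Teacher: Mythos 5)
Your proof is correct and follows essentially the same route as the paper: reduce to the monogenic Lemma \ref{infbd} via the quasi-isomorphism $\mathrm{Kos}(M,f^n,g^m)\simeq [M/g^mM \overset{f^n}\to M/g^mM]$ for $g$-torsion-free $M$ (the remark preceding the lemma), and deduce part (2) by applying $M\otimes^{\mathrm L}_A-$ to part (1) for $A$. The only addition is that you spell out why $M/g^mM$ has bounded $f^\infty$-torsion for every $m$ and why the non-uniformity of that bound in $m$ is harmless, a point the paper leaves implicit.
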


\begin{proof}
Using the last remark, this follows from lemma \ref{infbd}.
\end{proof}

\begin{lem} \label{compbound}
If $M$ is an $A$-module such that $M/gM$ has bounded $f^\infty$-torsion, then the $(f,g)$-adic topology on $gM$ is identical to the topology induced by the $(f,g)$-adic topology of $M$.
\end{lem}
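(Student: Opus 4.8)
The plan is to show that the two filtrations of $gM$ by the submodules $(f,g)^ngM$, which define its own $(f,g)$-adic topology, and by $gM \cap (f,g)^nM$, which define the topology induced from $M$, are mutually cofinal and hence define the same topology. One inclusion is immediate: since $gM \subseteq M$, every $(f,g)^n gM$ is contained in both $gM$ and $(f,g)^nM$, so $(f,g)^ngM \subseteq gM \cap (f,g)^nM$ and the induced topology is coarser. The substance is the reverse, Artin--Rees type inclusion. Letting $N$ be a bound for the $f^\infty$-torsion of $M/gM$ and setting $c := \max(N,1)$, I claim that
\[
gM \cap (f,g)^{n+c}M \subseteq (f,g)^n gM \qquad (n \in \mathbb N).
\]

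To prove this I first record the identity $(f,g)^n gM = g(f,g)^nM$, which reduces the target to writing an element $x = gs$ of $(f,g)^{n+c}M$ as $g$ times an element of $(f,g)^nM$. Writing $x = \sum_{i+j=n+c} f^ig^j t_{ij}$ with $t_{ij}\in M$, I would separate the monomials with $j \geq 1$, whose sum is visibly $g$ times an element of $(f,g)^{n+c-1}M \subseteq (f,g)^nM$ (here $c \geq 1$ is used), from the single pure-power term $f^{n+c}t$ with $t := t_{n+c,0}$. Since $x$ and the first part both lie in $gM$, so does $f^{n+c}t$; equivalently, the class of $t$ in $M/gM$ is killed by $f^{n+c}$.

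This is exactly where the hypothesis enters. The class of $t$ lies in ${}_{f^\infty}(M/gM) = {}_{f^N}(M/gM)$, so $f^Nt \in gM$, say $f^Nt = gu$ for some $u \in M$. Because $c \geq N$, I then obtain $f^{n+c}t = f^{n+c-N}gu = g\,(f^{n+c-N}u) \in g(f,g)^nM = (f,g)^ngM$, and combining with the first part gives $x \in (f,g)^ngM$, as wanted. I would close by chaining the two inclusions as $(f,g)^{n+c}gM \subseteq gM \cap (f,g)^{n+c}M \subseteq (f,g)^ngM$, which exhibits the filtrations as mutually cofinal and therefore topologically equivalent.

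The only real obstacle is controlling the leftover pure power $f^{n+c}t$: without a uniform bound on the $f^\infty$-torsion of $M/gM$ there would be no cap on how many times one must ``divide by $g$'', and a shift $c$ independent of $n$ would not exist. The boundedness assumption is precisely what supplies a single $N$ valid for all $n$, so this is both the crux of the argument and the place where the hypothesis is indispensable; everything else is elementary manipulation of the ideal $(f,g)$.
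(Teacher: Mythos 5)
Your proof is correct and its crux is exactly that of the paper: use the uniform bound $N$ on the $f^\infty$-torsion of $M/gM$ to replace $f^{n+c}t \in gM$ by $f^Nt = gu$ and then factor out $f^n$. The only difference is that the paper first reduces (without detail) to comparing $f$-adic topologies on $gM$, whereas you work directly with the $(f,g)$-adic filtration and handle the mixed monomials explicitly, which makes that implicit reduction unnecessary.
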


\begin{proof}
It is actually sufficient to show that the $f$-adic topology on $gM$ is identical to the topology induced by the $f$-adic topology of $M$.
In other words, we have to show that, given $n \in \mathbb N$, we can find $m \in \mathbb N$ such that
\[
gM \cap f^mM \subset f^ngM.
\]
If $M/gM$ has $f^\infty$-torsion bounded by $N \in \mathbb N$, then we can choose $m := n+N$: if $s \in M$ satisfies $f^ms = gt$ for some $t \in M$, then there also exists $r \in M$ such that $f^Ns = gr$, and therefore $f^ms=f^ngr$.
\end{proof}

\begin{prop} \label{compcomp}
If an $A$-module $M$ is bounded, then its derived completion $\widehat M$ is classically complete and bounded.
\end{prop}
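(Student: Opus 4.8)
The plan is to first identify the derived completion $\widehat M$ with a classical one (so that the appendix's ``hat = derived'' convention causes no trouble), and then read off boundedness from the completion of a short exact sequence built from multiplication by $g$.

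First I would compute $\widehat M$ through the Koszul description of derived completion, namely $\widehat M \simeq \mathrm R\varprojlim_n \mathrm{Kos}(M, f^n, g^n)$. Since $M$ is bounded, Lemma \ref{bdlem} gives an isomorphism of pro-complexes $\{\mathrm{Kos}(M, f^n, g^n)\}_n \simeq \{M/(f^n,g^n)M\}_n$. The latter is a system of discrete modules with surjective transition maps, so it is Mittag-Leffler, its $\mathrm R^1\varprojlim$ vanishes, and $\widehat M \simeq \varprojlim_n M/(f^n,g^n)M$ is discrete. As the ideals $(f^n,g^n)$ are cofinal among the powers $(f,g)^k$, this inverse limit is exactly the classical $(f,g)$-adic completion of $M$; hence $\widehat M$ is discrete and classically complete.

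Next I would treat boundedness. Because $M$ is $g$-torsion free, multiplication by $g$ yields a short exact sequence $0 \to M \overset g \to M \to M/gM \to 0$ in which $g$ induces an isomorphism $M \simeq gM$. This is where Lemma \ref{compbound} enters: it asserts that the $(f,g)$-adic topology of $gM$ coincides with the topology induced from $M$, which, transported along $g : M \simeq gM$, says that the subspace topology on the left-hand copy of $M$ is its own $(f,g)$-adic topology. The inverse systems of the subquotients have surjective transition maps, so classical completion keeps the sequence exact, and we obtain
\[
0 \to \widehat M \overset g \to \widehat M \to \widehat{M/gM} \to 0,
\]
where $\widehat{M/gM}$ is the classical $(f,g)$-adic completion of $M/gM$. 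Since $g$ annihilates $M/gM$, the $(f,g)$-adic and $f$-adic topologies on it agree literally (one has $(f,g)^k(M/gM) = f^k(M/gM)$), so $\widehat{M/gM}$ is also its classical $f$-adic completion.

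Finally I would invoke Proposition \ref{modeqn} for the module $M/gM$, which has bounded $f^\infty$-torsion: its $f$-adic completion is classically complete with bounded $f^\infty$-torsion. Reading off the displayed exact sequence, injectivity of $g$ shows that $\widehat M$ is $g$-torsion free, while $\widehat M/g\widehat M \simeq \widehat{M/gM}$ inherits bounded $f^\infty$-torsion; thus $\widehat M$ is bounded, and it is classically complete by the first step. The main obstacle is precisely the middle step: one must ensure both that completing the short exact sequence preserves exactness and, crucially, that the kernel term completes to $\widehat M$ itself rather than to a completion for a coarser topology, and it is Lemma \ref{compbound} that secures this identification.
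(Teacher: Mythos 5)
Your proof is correct and follows essentially the same route as the paper: identify the derived completion with the classical one via the Koszul pro-isomorphism of lemma \ref{bdlem}, then complete the strict exact sequence $0 \to M \overset{g}\to M \to M/gM \to 0$ (strictness being exactly what lemma \ref{compbound} supplies) and apply proposition \ref{modeqn} to $M/gM$. Your only deviation is to invoke the first assertion of lemma \ref{bdlem} (for $M$ bounded) rather than the second (for $A$ bounded), which is in fact the more apposite citation here since the proposition does not assume $A$ bounded.
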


\begin{proof}
It follows from the second assertion of lemma \ref{bdlem} that both completions coincide and it remains to show that $\widehat M$ is bounded.
It follows from lemma \ref{compbound} that the sequence
\[
0 \longrightarrow M \overset {g} \longrightarrow M \longrightarrow M/gM \longrightarrow 0
\]
is \emph{strict} exact, and then the sequence
\[
0 \longrightarrow \widehat M \overset {g} \longrightarrow \widehat M \longrightarrow \widehat {M/gM} \longrightarrow 0
\]
 is also strict exact.
As a consequence, we see that $\widehat M$ is $g$-torsion free and $\widehat M/g\widehat M = \widehat {M/gM} $ has bounded $f^\infty$-torsion thanks to proposition \ref{modeqn}.
 \end{proof}

\begin{prop} \label{flatflat}
If $A$ is bounded and $M$ is an $A$-module, then the following are equivalent
\begin{enumerate}
\item $M$ is completely flat,
\item $M$ is formally flat and bounded.
\end{enumerate}
\end{prop}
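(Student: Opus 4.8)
The plan is to reduce this ``bigenic'' statement to the monogenic Proposition \ref{modeq2}, applied to the ring $B := A/gA$. Since $A$ is bounded, $A$ is $g$-torsion free and $B$ is an adic ring (for its $f$-adic topology) with bounded $f^\infty$-torsion, so \ref{modeq2} is available over $B$. The bridge between the two settings is the following observation: if $M$ is $g$-torsion free, then $A$ being $g$-torsion free gives $M \otimes^{\mathrm L}_A A/gA \simeq [M \overset{g}\to M] \simeq M/gM$, which is discrete, and therefore
\[
M \otimes^{\mathrm L}_A A/(f,g) \simeq (M/gM) \otimes^{\mathrm L}_B B/fB .
\]
Consequently, for a $g$-torsion free module $M$, complete flatness of $M$ over $A$ is equivalent to complete flatness of $M/gM$ over $B$.

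For the implication $(2)\Rightarrow(1)$, assume $M$ is formally flat and bounded. Boundedness gives that $M$ is $g$-torsion free, so the bridge applies. From formal flatness of $M$ over $A$ one reads off that $M/gM$ is formally flat over $B$ (indeed $(M/gM)/f^n(M/gM) = M/(f^n,g)M$ is flat over $A/(f^n,g) = B/f^nB$), and boundedness gives that $M/gM$ has bounded $f^\infty$-torsion. Proposition \ref{modeq2} then shows $M/gM$ is completely flat over $B$, and the displayed identity yields that $M$ is completely flat over $A$.

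For $(1)\Rightarrow(2)$, formal flatness is for free: complete flatness together with Lemma \ref{power} gives that $M \otimes^{\mathrm L}_A A/(f^n,g^m)$ is discrete flat for all $n,m$, hence $M/(f^n,g^m)M$ is flat over $A/(f^n,g^m)$, which is exactly formal flatness. The real content is boundedness, that is, $g$-torsion freeness of $M$ together with bounded $f^\infty$-torsion of $M/gM$; and once $g$-torsion freeness is known, the second half again follows from the bridge, since the right-hand side of the display is then discrete flat, so $M/gM$ is completely flat over $B$ and \ref{modeq2} forces its $f^\infty$-torsion to be bounded.

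The main obstacle is therefore the $g$-torsion freeness of $M$ in $(1)\Rightarrow(2)$: complete flatness is only a condition modulo $(f,g)$, whereas $g$-torsion freeness is a statement for the finer $g$-adic topology, and the two are not linked by the cheap inheritance of complete flatness (which only passes to coarser topologies). The plan is to upgrade complete flatness from the $(f,g)$-adic to the $g$-adic topology using boundedness of $A$: working over $A/g^mA$ (which again has bounded $f^\infty$-torsion, being a successive extension of copies of $B$), the pro-complex comparisons of Lemmas \ref{bdlem} and \ref{infbd} identify $\{M \otimes^{\mathrm L}_A A/(f^n,g^m)\}_n$ with the $f$-power Koszul pro-system of $M \otimes^{\mathrm L}_A A/g^mA$, whose terms are discrete by hypothesis; together with the injections $M/g^{m-1}M \hookrightarrow M/g^mM$ coming from formal flatness (which already place the $g$-torsion inside $\bigcap_b g^bM$), this forces $M \otimes^{\mathrm L}_A A/g^mA$ to be discrete, i.e.\ $M$ to be completely flat for the $g$-adic topology. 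Since $A$ is $g$-torsion free, Lemma \ref{compbd} then transfers the vanishing torsion bound to $M$, giving $g$-torsion freeness. This last torsion-control step --- of the same nature as the arguments underlying Propositions \ref{modeqn}, \ref{monobd} and \ref{compcomp} --- is where the weight of the proof lies.
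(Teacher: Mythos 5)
Your reduction of $(2)\Rightarrow(1)$ to the monogenic proposition \ref{modeq2} over $B:=A/gA$ is correct, and it is essentially the paper's argument in different packaging: the paper runs the same computation through lemma \ref{bdlem} (the bigenic analogue of lemma \ref{infbd}) instead of passing through the quotient ring, but the content is identical. You have also put your finger on the genuine difficulty of $(1)\Rightarrow(2)$, namely the $g$-torsion freeness of $M$, which the paper's one-line proof (``exactly as in proposition \ref{modeq2}'') silently skips: in the monogenic case the torsion control comes from lemma \ref{compbd} precisely because $f$ generates the ideal of definition, whereas here $g$ alone does not.

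Unfortunately the argument you sketch for that step does not close, and in fact cannot. The ``injections $M/g^{m-1}M\hookrightarrow M/g^mM$ coming from formal flatness'' are not available: formal flatness only gives flatness of $M/JM$ over $A/J$ when $J$ is an ideal of definition of the $(f,g)$-adic topology, and $(g^m)$ is not one, so you may not tensor the injection $A/g^{m-1}A\overset{g}{\hookrightarrow}A/g^mA$ with $M$ and retain injectivity. Nor does pro-discreteness of $\{M\otimes^{\mathrm L}_AA/(f^n,g^m)\}_n$ force $M\otimes^{\mathrm L}_AA/g^mA$ to be discrete in the absence of a completeness hypothesis. A concrete obstruction: take $A=\mathbb Z_p[[u]]$ with $(f,g)=(p,u)$ and $M=\mathbb Q_p$, viewed as an $A$-module via $u\mapsto 0$. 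A Koszul computation gives $M\otimes^{\mathrm L}_AA/(p,u)=0$, so $M$ is completely flat (and formally flat), yet ${}_uM=M\neq 0$, so $M$ is not bounded; replacing $M$ by $A\oplus\mathbb Q_p$ defeats even the faithfully flat variant. So $(1)\Rightarrow(2)$ as stated needs an extra hypothesis on $M$ --- derived completeness as in theorem \ref{compform}, or separatedness as in proposition \ref{torsfr} --- to recover $g$-torsion freeness; those are the forms in which the statement is actually used in the body of the paper. In short, your diagnosis of where the weight of the proof lies is exactly right, but the step you flag is a gap in the proposition itself, not one that a better argument can fill.
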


\begin{proof}
This is proven exactly as in proposition \ref{modeq2}.
\end{proof}

\begin{prop} \label{torsfr}
If $A$ is bounded and $M$ is a formally flat $A$-module which is separated, then $M$ also is bounded.
\end{prop}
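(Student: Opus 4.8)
The plan is to verify directly the two defining conditions of boundedness in Definition \ref{defbound} for $M$: that $M$ is $g$-torsion free, and that $M/gM$ has bounded $f^\infty$-torsion. Throughout I would use that each ideal $(f^a,g^b)$ with $a,b\ge 1$ is an ideal of definition of $A$ (its radical is $\sqrt{(f,g)}$), so formal flatness of $M$ gives that $M_{a,b}:=M/(f^a,g^b)M$ is flat over $B_{a,b}:=A/(f^a,g^b)$ for all $a,b$. Write $N$ for a bound on the $f^\infty$-torsion of $A/gA$.

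First I would treat the $g$-torsion, mirroring the proof of proposition \ref{monobd}. Using that $A$ is $g$-torsion free and that $A/gA$ has $f^\infty$-torsion bounded by $N$, an elementary manipulation shows that if $gy\in(f^a,g^b)$ then $y\in(f^{a-N},g^{b-1})$, so the $g$-torsion submodule ${}_gB_{a,b}$ is contained in the image of $(f^{a-N},g^{b-1})$. Flatness of $M_{a,b}$ then yields ${}_gM_{a,b}={}_gB_{a,b}\otimes_{B_{a,b}}M_{a,b}\subseteq (f^{a-N},g^{b-1})M_{a,b}$. Hence any $x\in M$ with $gx=0$ lies in $(f^{a-N},g^{b-1})M$ for all $a,b$, and separatedness of $M$ forces $x=0$. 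The very same computation, carried out for the ideals $(f,g)^n$, gives the Artin--Rees--type bound ${}_g\bigl(A/(f,g)^n\bigr)\subseteq (f,g)^{n-N}\bigl(A/(f,g)^n\bigr)$, whence, again by flatness, $(f,g)^nM\cap gM\subseteq g\,(f,g)^{n-N}M$ for $n\ge N$. This is a form of lemma \ref{compbound} that does \emph{not} presuppose bounded torsion, and it is the key technical device.

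Next, for the $f^\infty$-torsion of $M/gM$: base change of formal flatness along $A\to A/gA$ shows that $M/gM$ is formally flat over $A/gA$, since its $f$-adic reductions are exactly the $M_{a,1}$, flat over the $B_{a,1}$. As $A/gA$ has $f^\infty$-torsion bounded by $N$, the monogenic proposition \ref{monobd} applied to the triple $(A/gA,\,f,\,M/gM)$ delivers the desired bound, \emph{provided} $M/gM$ is $f$-adically separated, that is, provided $gM$ is closed in $M$. Closedness I would extract from the Artin--Rees bound: if $x=gw_n+r_n$ with $r_n\in(f,g)^nM$, then for $n<n'$ one has $g(w_n-w_{n'})\in (f,g)^nM\cap gM\subseteq g\,(f,g)^{n-N}M$, so $g$-torsion freeness gives $w_n-w_{n'}\in(f,g)^{n-N}M$; thus $\{w_n\}$ is Cauchy and exhibits $x$ as a multiple of $g$ once one can pass to a limit.

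The main obstacle is precisely this last point: the closedness of $gM$, equivalently the $f$-adic separatedness of $M/gM$. The difficulty is that the off-the-shelf statement lemma \ref{compbound} assumes the very bounded-torsion conclusion we are trying to establish, so the apparent circularity must be broken by hand; the uniform bound $(f,g)^nM\cap gM\subseteq g\,(f,g)^{n-N}M$, obtained from flatness rather than from torsion boundedness, is what breaks it. The one genuinely delicate step is that the Cauchy sequence $\{w_n\}$ a priori converges only in the classical completion of $M$, so care is needed to see that its limit represents $x$ inside the merely separated module $M$ itself; with that in place, $gM$ is closed, $M/gM$ is separated, proposition \ref{monobd} applies, and $M$ is bounded.
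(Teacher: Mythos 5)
Your argument is essentially the paper's proof: the containment ${}_{g}M_{a,b}\subseteq (f^{a-N},g^{b-1})M_{a,b}$ that you extract from flatness of the reductions $M_{a,b}$ over $B_{a,b}$ is exactly what the paper obtains from its snake-lemma diagram and the relation $\alpha_k=f^{k-N}\alpha_N$, and the remaining steps — $g$-torsion-freeness from separatedness, the Artin--Rees-type bound $gM\cap(f^k,g^{n+1})M\subseteq (f^{k-N},g^{n})gM$, closedness of $gM$ so that $M/gM$ is $f$-adically separated, and then proposition \ref{monobd} applied to $M/gM$ over $A/gA$ — coincide with the paper's. The delicate point you flag at the end (the Artin--Rees bound only shows that the approximating sequence is Cauchy, and its limit a priori lives in the classical completion rather than in the merely separated module $M$) is precisely the step the paper itself dispatches in the single clause ``and $gM$ is therefore closed in $M$,'' so you have reproduced the argument faithfully, including its one terse step.
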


\begin{proof}
If $A/g$ has $f^\infty$-torsion bounded by $N$, then it follows from the snake-lemma
applied to the diagram
\[
\xymatrix{
0 \ar[r]  & A/g^n \ar[r]^-{g} \ar[d]^{f^k} & A/g^{n+1} \ar[r] \ar[d]^{f^k} & A/g\ar[d]^{f^k}  \ar[r] & 0
\\ 0 \ar[r] & A /g^n \ar[r]^-{g} & A/g^{n+1} \ar[r] & A/g \ar[r] & 0
}
\]
that there exists for $k \geq N$ and any $n$, a long exact sequence of $A/(f^{k},g^{n+1})$-modules
\[
\cdots \to  {}_{f^N}(A/g) \overset {\alpha_k} \to A/(f^{k},g^n) \overset {g} \to A/(f^{k},g^{n+1})\to \cdots
\]
and a quick look shows that $\alpha_{k} = f^{k-N}\alpha_N$.
Since $M$ is formally flat, $M/(f^{k},g^{n+1})M$ is flat over $A/(f^{k},g^{n+1})$, and therefore, the sequence
\[
\cdots \to M \otimes_A {}_{f^N}(A/g) \to M/(f^{k},g^n)M \overset {g} \to M/(f^{k},g^{n+1})M \to \cdots
\]
is also exact.
Moreover, the image of the first map is contained in $f^{k-N}M/(f^{k},g^n)M$.
It follows that, whenever $s \in M$ satisfies $gs\in (f^{k},g^{n+1})M$, we have $s \in (f^{k-N},g^n)M$.
In particular, if $gs=0$, then, necessarily, $s =0$ because $M$ is separated.
This shows that $M$ is $g$-torsion free.
But it also implies that
\[
gM \cap (f^{k},g^{n+1})M \subset (f^{k-N},g^n)gM
\]
and $gM$ is therefore closed in $M$.
In other words, $M/gM$ is separated and we may apply proposition \ref{monobd} to conclude that $M$ is bounded.
\end{proof}

\begin{thm} \label{compform}
Assume $A$ is bounded and let $M^\bullet$ be a complex of $A$-modules.
Then, the following conditions are equivalent
\begin{enumerate}
\item $M^\bullet$ is derived complete and completely flat,
\item $M^\bullet$ is classically complete and formally flat.
\end{enumerate}
\end{thm}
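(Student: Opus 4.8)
The plan is to follow the pattern of the monogenic case, proposition \ref{monoth}, feeding in the bigenic tools established in this subsection. Since the pro-system $\{A/(f^n,g^m)\}_{n,m}$ admits the diagonal $\{A/I_n\}_n$ with $I_n := (f^n,g^n)$ as a cofinal subsystem, all the derived inverse limits below may be computed along this single-indexed system.

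For the implication (1) $\Rightarrow$ (2), suppose $M^\bullet$ is derived complete and completely flat. Using lemma \ref{bdlem} together with derived completeness, I would write
\[
M^\bullet \simeq \widehat{M^\bullet} \simeq \mathrm R\varprojlim_n \mathrm{Kos}(M^\bullet, f^n, g^n) \simeq \mathrm R\varprojlim_n \left(M^\bullet \otimes^{\mathrm L}_A A/I_n\right).
\]
Complete flatness says that $M^\bullet \otimes^{\mathrm L}_A A/(f,g)$ is discrete flat over $A/(f,g)$; since $(f,g)^{2n-1} \subset I_n$, lemma \ref{power} upgrades this to the statement that each $P_n := M^\bullet \otimes^{\mathrm L}_A A/I_n$ is discrete flat over $A/I_n$. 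The crucial point is then that the transition maps $P_{n+1} \to P_n$ are surjective, which I would obtain from flat base change: because $P_{n+1}$ is flat over $A/I_{n+1}$, associativity of the derived tensor product gives $P_{n+1} \otimes_{A/I_{n+1}} A/I_n \simeq M^\bullet \otimes^{\mathrm L}_A A/I_n = P_n$, that is $P_n \simeq P_{n+1}/I_nP_{n+1}$. Surjectivity yields the Mittag--Leffler condition, so $\mathrm R\varprojlim_n P_n = \varprojlim_n P_n$ is discrete, and hence so is $M^\bullet$. Writing $M := \mathrm H^0(M^\bullet) = M^\bullet$, the module $M$ is discrete and completely flat, so proposition \ref{flatflat} makes it formally flat and bounded; and since $P_n = M/I_nM$, the identity $M \simeq \varprojlim_n M/I_nM$ shows that $M$ is classically complete, which is exactly (2).

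For the converse (2) $\Rightarrow$ (1), I would argue that classical completeness forces derived completeness (remark after definition \ref{defcomp}), and that a classically complete module is in particular separated, so proposition \ref{torsfr} makes the formally flat module $M$ bounded; proposition \ref{flatflat} then promotes formal flatness to complete flatness, giving (1).

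The main obstacle is the discreteness step in (1) $\Rightarrow$ (2), namely controlling $\mathrm R^1\varprojlim$ of the system $\{M^\bullet \otimes^{\mathrm L}_A A/I_n\}$. Unlike the monogenic situation, the graded pieces $I_n/I_{n+1}$ are not $A/(f,g)$-modules, so one cannot directly test discreteness against an $A/(f,g)$-module as in proposition \ref{monoth}; instead it is the flat base-change description $P_n \simeq P_{n+1}/I_nP_{n+1}$ — which relies on having first secured flatness of each $P_n$ over $A/I_n$ through lemma \ref{power} — that makes the surjectivity of the transition maps, and hence the Mittag--Leffler vanishing, available.
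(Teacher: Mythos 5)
Your proof is correct, but for the main implication (1) $\Rightarrow$ (2) it takes a genuinely different route from the paper. The paper proceeds by d\'evissage through the monogenic case: it tensors with $A/g^nA \simeq [A \overset{g^n}\to A]$, observes that each $M^\bullet \otimes^{\mathrm L}_A A/g^nA$ is derived complete and completely flat for the $f$-adic topology, invokes proposition \ref{monoth} to conclude that each of these is discrete, classically complete and flat over $A/g^nA$, and only then assembles the tower in the $g$-direction via surjectivity of the transition maps and Mittag--Leffler. You instead attack the bigenic system head-on along the diagonal $I_n = (f^n,g^n)$: since $(f,g)^{2n-1} \subset I_n$, lemma \ref{power} alone forces each $M^\bullet \otimes^{\mathrm L}_A A/I_n$ to be discrete flat --- something unavailable for the quotients $A/g^nA$, since $(g^n)$ contains no power of the ideal of definition, which is precisely why the paper must route through proposition \ref{monoth} there. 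Your surjectivity argument via flat base change is the same device the paper uses (its commutative diagram of $\mathrm H^0$'s), and the Mittag--Leffler conclusion is identical. What your version buys is symmetry in $f$ and $g$ and an argument that extends verbatim to any finitely generated ideal of definition without iterating the d\'evissage; what the paper's version buys is reuse of the already-proved monogenic statement and a direct proof of the ordered boundedness conclusion ($\mathrm H^0(M^\bullet)$ is $g$-torsion free and $\mathrm H^0(M^\bullet)/g\mathrm H^0(M^\bullet)$ has bounded $f^\infty$-torsion), which you recover only indirectly through proposition \ref{flatflat}. The two small points you should make explicit are the cofinality of the diagonal in the bi-indexed pro-system and the (immediate) extension of lemma \ref{bdlem} from modules to complexes, obtained by tensoring the pro-isomorphism $\{\mathrm{Kos}(A,f^n,g^m)\}\simeq\{A/(f^n,g^m)\}$ with $M^\bullet$. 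Your treatment of (2) $\Rightarrow$ (1) via propositions \ref{torsfr} and \ref{flatflat} is exactly what the paper's ``we already know'' is silently invoking.
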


\begin{proof}
We already know that the second condition implies the first.
Assume conversely that $M^\bullet$ is derived complete and completely flat.
Then, $M^\bullet$ is automatically formally flat.
Moreover, the fact that $M^\bullet$ is completely flat implies that $M^\bullet \otimes^\mathrm{L}_{A} A/g^nA$ is completely flat (for the $f$-adic topology).
On the other hand, since $A$ is $g$-torsion free, we have
\[
M^\bullet \otimes^\mathrm{L}_{A} A/g^nA \simeq [M^\bullet \overset {g^n} \to M^\bullet].
\]
Since $M^\bullet$ is derived complete, proposition \ref{stb} implies that $M^\bullet \otimes^\mathrm{L}_{A} A/g^nA$ also is derived complete (for the $f$-adic topology).
Therefore, it follows from proposition \ref{monoth} that $M^\bullet \otimes^\mathrm{L}_{A} A/g^nA$ is (discrete) classically complete and formally flat with bounded $f^\infty$-torsion.
In particular,
\[
M^\bullet \otimes^\mathrm{L}_{A} A/g^nA \simeq \mathrm H^0(M^\bullet \otimes^\mathrm{L}_{A} A/g^nA)
\]
is discrete flat over $A/g^nA$.
Since $\mathrm H^0(M^\bullet \otimes^\mathrm{L}_{A} A/g^{n+1}A)$ is flat over $A/g^{n+1}A$, we have the following commutative diagram
\[
\xymatrix{
\mathrm H^0(M^\bullet \otimes^\mathrm{L}_{A} A/g^{n+1}A) \otimes^\mathrm{L}_{A} A/g^nA \ar[r]^-\simeq \ar[d]^-\simeq & \mathrm H^0(M^\bullet \otimes^\mathrm{L}_{A} A/g^{n+1}A)/g^n\mathrm H^0(M^\bullet \otimes^\mathrm{L}_{A} A/g^{n+1}A) \ar[dd] \\
(M^\bullet \otimes^\mathrm{L}_{A} A/g^{n+1}A) \otimes^\mathrm{L}_{A} A/g^nA \ar[d]^-\simeq \ar[d]^-\simeq\\
M^\bullet \otimes^\mathrm{L}_{A} A/g^nA \ar[r]^-\simeq & \mathrm H^0(M^\bullet \otimes^\mathrm{L}_{A} A/g^{n}A),
}
\]
from which we deduce the surjectivity of the map
\[
\mathrm H^0(M^\bullet \otimes^\mathrm{L}_{A} A/g^{n+1}A) \twoheadrightarrow \mathrm H^0(M^\bullet \otimes^\mathrm{L}_{A} A/g^{n}A).
\]
In particular, this is a Mittag-Leffler system.
Note now that
\[
M^\bullet \otimes^\mathrm{L}_{A} [A \overset {g^n} \to A] \simeq [M^\bullet \overset {g^n} \to M^\bullet] \simeq M^\bullet \otimes^\mathrm{L}_{A} A/g^nA \simeq \mathrm H^0(M^\bullet \otimes^\mathrm{L}_{A} A/g^nA).
\]

Since $M^\bullet$ is derived complete (for the $g$-adic topology), we obtain that
\[
M^\bullet = \mathrm R\varprojlim M^\bullet \otimes^\mathrm{L}_{A} [A \overset {g^n} \to A] \simeq \mathrm \varprojlim \mathrm H^0(M^\bullet \otimes^\mathrm{L}_{A} A/g^nA)
\]
is discrete and classically complete.
In particular,
\[
[\mathrm H^0(M^\bullet) \overset g \to \mathrm H^0(M^\bullet)] \simeq M^\bullet \otimes^\mathrm{L}_{A} A/gA
\]
is discrete.
In other words, $\mathrm H^0(M^\bullet)$ is $g$-torsion free and
\[
\mathrm H^0(M^\bullet)/g \mathrm H^0(M^\bullet) \simeq M^\bullet \otimes^\mathrm{L}_{A} A/gA
\]
has bounded $f^\infty$-torsion.
\end{proof}

\begin{rmks}
\begin{enumerate}
\item
Theorem \ref{compform} is the same as Tian's proposition 1.4 in \cite{Tian21}.
We are very thankful to him for clarifying some points in his proof allowing us to improve on our original statement.
\item As a consequence of theorem \ref{compform}, we see that if $B$ is a bounded $A$-algebra and $M$ is a completely flat $A$-module, then the derived completed tensor product $B \widehat \otimes^{\mathrm L}_{A} M$ is identical to the classical completed tensor product $B \widehat \otimes_{A} M$.
\end{enumerate}
\end{rmks}

\addcontentsline{toc}{section}{References}
\printbibliography

\Addresses

\end{document}